
\documentclass{article}
\usepackage{amsfonts}
\usepackage{amsmath}
\usepackage{color}
\usepackage{amsmath,amssymb,bbm}
\usepackage{geometry}
\usepackage{graphicx}

\setcounter{MaxMatrixCols}{10}

\newtheorem{theorem}{Theorem}

\newtheorem{lemma}[theorem]{Lemma}

\newtheorem{problem}[theorem]{Problem}
\newtheorem{proposition}[theorem]{Proposition}
\newtheorem{remark}[theorem]{Remark}

\newenvironment{proof}[1][Proof]{\noindent\textbf{#1.} }{\ \rule{0.5em}{0.5em}}
\input{tcilatex}
\allowdisplaybreaks

\begin{document}

\title{The Stochastic Properties of $\ell ^{1}$-Regularized Spherical
Gaussian Fields}
\author{Valentina Cammarota and Domenico Marinucci \\
Department of Mathematics, University of Rome Tor Vergata}
\maketitle

\begin{abstract}
Convex regularization techniques are now widespread tools for solving
inverse problems in a variety of different frameworks. In some cases, the
functions to be reconstructed are naturally viewed as realizations from
random processes; an important question is thus whether such regularization
techniques preserve the properties of the underlying probability measures.
We focus here on a case which has produced a very lively debate in the
cosmological literature, namely Gaussian and isotropic spherical random
fields, and we prove that Gaussianity and isotropy are not conserved in
general under convex regularization over a Fourier dictionary, such as the
orthonormal system of spherical harmonics.\newline

\noindent \textbf{Keywords and Phrases}: Random Fields, Spherical Harmonics,
Gaussianity, Isotropy, Convex Regularization, Inpainting.\newline

\noindent \textbf{AMS Classification}: 60G60; 33C53, 43A90
\end{abstract}

\section{Introduction}

Let $T:M\rightarrow \mathbb{R}$ be a square integrable function on a
manifold $M$, and assume that the following is observed:%
\begin{equation}
T^{obs}:=\mathcal{A}T+n\text{ ,}  \label{invprob}
\end{equation}%
where $\mathcal{A}:L^{2}(M)\rightarrow $ $L^{2}(M)$ is a linear operator
that can represent, for instance, a blurring convolution or a mask setting
some values of the function $T$ to zero, while $n:M\rightarrow \mathbb{R}$
denotes observational noise. Recovering $T$ from observations on $T^{obs}$
is a standard example of a linear inverse problem, and it is now classical
to pursue a solution for (\ref{invprob}) by means of convex/$\ell ^{1}$%
-regularization procedures. More precisely, we can proceed by postulating
that the signal $T$ can be sparsely represented in a given dictionary $\Psi
, $ e.g., $T=\Psi \alpha _{0}$ where the vector $\alpha _{0}$ is assumed to
be sparse in a suitable sense, and then solving the $\ell ^{1}$-regularized
problem%
\begin{equation}
\alpha ^{reg}:=\arg \min_{\alpha }\left\{ \lambda \left\Vert \alpha
\right\Vert _{\ell ^{1}}+\frac{1}{2}\left\Vert T^{obs}-\mathcal{A}\Psi
\alpha \right\Vert _{L^{2}(S^{2})}^{2}\right\} \text{ ,}  \label{min1}
\end{equation}%
which can be viewed for instance as a form of Basis Pursuit Denoising \cite%
{chen} or a variation of the Lasso algorithm introduced in the statistical
literature by \cite{tibshirani}. Often the following alternative formulation
is considered:%
\begin{equation}
\alpha ^{reg}:=\arg \min_{\alpha }\left\{ \left\Vert \alpha \right\Vert
_{\ell ^{1}}\right\} \text{ subject to }\left\Vert T^{obs}-\mathcal{A}\Psi
\alpha \right\Vert _{L^{2}(S^{2})}\leq \varepsilon \text{ },  \label{min2}
\end{equation}%
for some $\varepsilon >0;$ it is known that there exist a bijection $\lambda
\leftrightarrow \varepsilon $ such that (\ref{min1}) and (\ref{min2}) have
the same solution \cite{StarckBook}. Many authors have worked on related
regularization problems over the last two decades - a very incomplete list
includes \cite{osborne}, \cite{daubechies}, \cite{efron}, \cite{fornasier},
\cite{mcewen}, \cite{wright}, see for instance \cite{StarckBook}, Chapter 7
for more references and a global overview. These results are also connected
to the rapidly growing literature on compressive sensing, see, e.g., \cite%
{donoho,baraniuk,Candes,rauhut,rauhut2}.

In many applied fields, it is customary to view $T$ as the realization of a
random field, and the reconstruction problems (\ref{min1}) and (\ref{min2})
are usually just the first steps before statistical data analysis (e.g.,
estimation and testing) is implemented. In other words, $T$ is viewed as a
random object on a probability space ($\Omega ,\Im ,P)$, $T(\omega
,x):=T:\Omega \times M\rightarrow \mathbb{R}$; hence it becomes important to
verify that $T^{reg}:=\Psi \alpha ^{reg}$, $T^{reg}:\Omega \times
M\rightarrow \mathbb{R}$, is close to $T$ in a meaningful probabilistic
sense. For instance, let $M$ be a homogeneous space of a compact group $%
\mathcal{G};$ a natural question is the following:

\begin{problem}
\label{problem1} Assume that the field $T$ is Gaussian and isotropic, e.g.,
the probability laws of $T(.)$ and $T^{g}(.)=T(g.)$ are the same for all $%
g\in \mathcal{G}.$ Is the random field $T^{reg}$ Gaussian and isotropic?
\end{problem}

The scenario we have described fits very well, for instance, the current
situation in the Cosmological literature, in particular in the field of
Cosmic Microwave Background (CMB) data analysis. The latter can be viewed as
a snapshot picture of the Universe at the so-called age of recombination,
e.g. $3.7\times 10^{5}$ years after the Big Bang (some 13 billion years
ago); its observation has been made possible by satellite experiments such
as WMAP \cite{WMAP} and Planck \cite{Planck}, which have raised an enormous
amount of theoretical and applied interest. CMB is usually viewed as a
single realization of a Gaussian isotropic random field on the sphere, e.g.,
$M=S^{2}$ and $\mathcal{G=}SO(3),$ the group of rotations in $\mathbb{R}^{3}$%
; observations are corrupted by observational noise and various forms of
convolutions (e.g., instrumental beams, masked regions) and a number of
efforts have been devoted to solving (\ref{invprob}) under these
circumstances. In this setting, algorithms such as (\ref{min1}) and (\ref%
{min2}) have been widely proposed, in some cases (see e.g., \cite{abrial1},
\cite{dupe,StarckBook,Starck1} and the references therein) taking as a
dictionary the orthonormal system of spherical harmonics $\left\{ Y_{\ell
m}\right\} .$ As well-known, the latter are eigenfunctions of the spherical
Laplacians $\Delta _{S^{2}}Y_{\ell m}=-\ell (\ell +1)Y_{\ell m}$ and lead to
the spectral representation%
\begin{equation*}
T(x)=\sum_{\ell =0}^{\infty }T_{\ell }(x)=\sum_{\ell =0}^{\infty
}\sum_{m=-\ell }^{\ell }a_{\ell m}Y_{\ell m}(x)\text{ .}
\end{equation*}%
Under Gaussianity and isotropy, this representation holds in the mean square
sense and the random coefficients are Gaussian and independent with variance
$Ea_{\ell m}\overline{a}_{\ell ^{\prime }m^{\prime }}=C_{\ell }\delta _{\ell
}^{\ell ^{\prime }}\delta _{m}^{m^{\prime }},$ the sequence $\left\{ C_{\ell
}\right\} $ representing the angular power spectrum (see for instance \cite%
{MarPecBook}). A very lively debate has then developed, to ascertain whether
in this setting the solution to the issue raised in Problem (\ref{problem1})
should allow for a positive or negative answer, see for instance \cite%
{Starck1}, \cite{StarckBayes} and the references therein. In particular, the
recent paper \cite{feeney} provides from an astrophysical perspective some
arguments and a large amount of numerical evidence to suggest that isotropy
will not hold in general.

The purpose of this paper is to address this question from a mathematical
point of view. To this aim, we will focus on idealistic circumstances where $%
\mathcal{A}$ is just the identity operator and noise $n$ is set identically
to zero, so that $T$ and $T^{obs}$ coincide. Of course, under these
circumstances the inverse problem would not really arise: however for our
aims these assumptions suffice, as we will show that even in this idealistic
setting stochastic properties such as Gaussianity and isotropy are not
preserved by regularization according to (\ref{min1}) or (\ref{min2}).

\subsection{Statement of the main results}

To establish our results, we shall first reformulate (\ref{min1}) and (\ref%
{min2}) in a form which is more directly amenable to stochastic analysis; in
particular, we shall show that:

\begin{proposition}
\label{equivalence} Let $T$ be a Gaussian isotropic spherical random field,
and denote by $\Psi $ the spherical harmonic dictionary. Then for any given $%
\delta ,\varepsilon >0,$ there exist a positive $\lambda =\lambda (\delta
,\varepsilon )$ such that the solution%
\begin{equation}
\alpha ^{reg}:=\arg \min_{\alpha }\left\{ \lambda (\delta ,\varepsilon
)\left\Vert \alpha \right\Vert _{\ell ^{1}}+\frac{1}{2}\left\Vert T-\Psi
\alpha \right\Vert _{L^{2}(S^{2})}^{2}\right\} \text{ }  \label{min4}
\end{equation}%
satisfies
\begin{equation}
\Pr \left\{ \left\Vert T-\Psi \alpha ^{reg}\right\Vert _{L^{2}(S^{2})}\leq
\varepsilon \right\} \geq 1-\delta \text{ }.  \label{min3}
\end{equation}
\end{proposition}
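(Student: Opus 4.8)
The plan is to exploit that $\Psi=\{Y_{\ell m}\}$ is an \emph{orthonormal} system, which turns the infinite-dimensional convex program (\ref{min4}) into a family of decoupled scalar problems. Writing $T=\sum_{\ell m}a_{\ell m}Y_{\ell m}$ and $\alpha=(\alpha_{\ell m})$, Parseval's identity gives
\[
\lambda\left\Vert \alpha\right\Vert_{\ell^1}+\frac12\left\Vert T-\Psi\alpha\right\Vert_{L^2(S^2)}^2=\sum_{\ell m}\left[\lambda|\alpha_{\ell m}|+\frac12\left(a_{\ell m}-\alpha_{\ell m}\right)^2\right],
\]
a sum of strictly convex terms, one per coefficient. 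Hence the unique minimizer is obtained coordinate by coordinate, and each coordinate is the classical soft-threshold $\alpha_{\ell m}^{reg}=\mathrm{sign}(a_{\ell m})\max(|a_{\ell m}|-\lambda,0)$. First I would record that, since $\sum_{\ell m}a_{\ell m}^2<\infty$ almost surely, only finitely many $|a_{\ell m}|$ exceed $\lambda$, so $\alpha^{reg}$ is a.s. finitely supported; this makes the argmin in the infinite-dimensional sequence space well defined and legitimizes the term-by-term minimization.

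The residual then becomes explicit, $a_{\ell m}-\alpha_{\ell m}^{reg}=\mathrm{sign}(a_{\ell m})\min(|a_{\ell m}|,\lambda)$, so that by Parseval
\[
\left\Vert T-\Psi\alpha^{reg}\right\Vert_{L^2(S^2)}^2=\sum_{\ell m}\min\left(a_{\ell m}^2,\lambda^2\right).
\]
The proposition thus reduces to showing that this nonnegative random variable can be forced to be small in probability by taking $\lambda$ small, and I would do so by bounding its mean and applying Markov's inequality. Using $Ea_{\ell m}^2=C_\ell$ and the multiplicity $2\ell+1$ of the index $m$, and splitting at a truncation level $L$,
\[
E\sum_{\ell m}\min\left(a_{\ell m}^2,\lambda^2\right)\leq\sum_{\ell\leq L}(2\ell+1)\min\left(C_\ell,\lambda^2\right)+\sum_{\ell>L}(2\ell+1)C_\ell\leq(L+1)^2\lambda^2+\sum_{\ell>L}(2\ell+1)C_\ell.
\]

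The parameter choices close the argument. Because $T\in L^2(S^2)$ almost surely, the total angular power is finite, $\sum_\ell(2\ell+1)C_\ell=E\left\Vert T\right\Vert_{L^2(S^2)}^2<\infty$; hence I can first fix $L=L(\delta,\varepsilon)$ so large that $\sum_{\ell>L}(2\ell+1)C_\ell\leq\frac12\delta\varepsilon^2$, and then choose $\lambda=\lambda(\delta,\varepsilon)>0$ so small that $(L+1)^2\lambda^2\leq\frac12\delta\varepsilon^2$. With these choices $E\sum_{\ell m}\min(a_{\ell m}^2,\lambda^2)\leq\delta\varepsilon^2$, and Markov's inequality yields $\Pr\{\left\Vert T-\Psi\alpha^{reg}\right\Vert_{L^2(S^2)}^2>\varepsilon^2\}\leq\delta$, which is exactly (\ref{min3}). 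I do not anticipate a genuine obstacle here: the estimate is clean once the orthonormal decoupling is in place, and the only mild care concerns the two points already flagged — the rigorous coordinate-wise argmin in the infinite-dimensional space (handled by the a.s. finite support) and the bookkeeping for a \emph{real} field. In the latter case one works either in the real spherical-harmonic basis or with the conjugacy relations $a_{\ell,-m}=(-1)^m\overline{a_{\ell m}}$; in both the objective still separates (now grouping $\pm m$), soft-thresholding commutes with the symmetry, and the residual identity and the bounds above go through verbatim.
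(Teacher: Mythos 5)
Your proof is correct, and it reaches the conclusion by a leaner route than the paper's. Both arguments share the same skeleton --- orthonormal decoupling of the objective, coordinate-wise soft-thresholding, a split of the spectrum at a finite truncation multipole, and a bound on the expected squared reconstruction error --- but the execution differs in how the per-coefficient error is controlled. You observe that the residual is exactly $\min(|a_{\ell m}|,\lambda)$ and use the distribution-free bound $E\min(|a_{\ell m}|^{2},\lambda^{2})\leq\min(C_{\ell},\lambda^{2})$, then close with Markov's inequality. The paper instead writes the same residual as $|a_{\ell m}|^{2}\mathbb{I}(|a_{\ell m}|\leq\lambda)+\lambda^{2}\mathbb{I}(|a_{\ell m}|>\lambda)$ and evaluates each piece with the explicit densities (Gaussian for $m=0$, Rayleigh for $m\neq 0$), $\mathrm{Erfc}$ bounds, and an integral test applied to the tail series $\sum_{\ell}(2\ell+1)\exp\{-\lambda^{2}/(2C_{\ell})\}$; that last step is precisely where the paper's standing assumption $C_{\ell}\leq K\ell^{-\alpha}$, $\alpha>2$, is genuinely used. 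Your argument needs only $\sum_{\ell}(2\ell+1)C_{\ell}<\infty$, i.e.\ finite variance, so it is both more elementary and slightly more general; it is also cleaner about the probability parameter, since you calibrate the expectation at level $\delta\varepsilon^{2}$ and invoke Markov explicitly, whereas the paper bounds the expected squared error by $\varepsilon$ and leaves the Markov step implicit. Your two housekeeping points are sound and worth having on record: the a.s.\ finite support of $\alpha^{reg}$ legitimizes the coordinate-wise argmin in the sequence space, and in the complex-valued scheme the minimizer is $(|a_{\ell m}|-\lambda)_{+}e^{i\psi_{\ell m}}$ (as in the paper's soft-thresholding Lemma), so the residual modulus is still $\min(|a_{\ell m}|,\lambda)$ and your key identity, hence the whole estimate, holds verbatim in both the complex- and real-valued regularization schemes.
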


The previous result is stating that for a suitable choice of $\lambda $ the
solution to (\ref{min1}) satisfies the constraint in (\ref{min2}) with
probability arbitrarily close to one, so that the two problems can be seen
as substantially equivalent in a stochastic setting. Let us now write%
\begin{equation*}
T_{\delta ,\varepsilon }^{reg}(x):=\sum_{\ell m}a_{\ell m}^{reg}(\delta
,\varepsilon )Y_{\ell m}(x)=\sum_{\ell }T_{\ell ;\delta ,\varepsilon
}^{reg}(x)\text{ .}
\end{equation*}%
The main claim of this paper is the following

\begin{theorem}
\label{theo-aniso} The random fields $T_{\delta ,\varepsilon }^{reg}(.)$ are
necessarily anisotropic and nonGaussian, for any (arbitrarily small but
positive) values of $\delta ,\varepsilon .$
\end{theorem}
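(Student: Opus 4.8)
The plan is to exploit the fact that, because $\Psi $ is the orthonormal spherical harmonic basis and $\mathcal{A}=I$, $n\equiv 0$, the minimization in (\ref{min4}) decouples completely across modes. Indeed, by Parseval's identity $\left\Vert T-\Psi \alpha \right\Vert _{L^{2}(S^{2})}^{2}=\sum_{\ell m}|a_{\ell m}-\alpha _{\ell m}|^{2}$ and $\left\Vert \alpha \right\Vert _{\ell ^{1}}=\sum_{\ell m}|\alpha _{\ell m}|$, so the objective is a sum of independent one-dimensional problems whose minimizer is the soft-thresholding map
\begin{equation*}
a_{\ell m}^{reg}=S_{\lambda }(a_{\ell m}):=\frac{a_{\ell m}}{|a_{\ell m}|}\bigl(|a_{\ell m}|-\lambda \bigr)_{+},
\end{equation*}
acting coordinatewise on each coefficient. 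I would carry out the argument in the real spherical harmonic basis, where the $a_{\ell m}$ are real, independent and $N(0,C_{\ell })$, and where each rotation $g\in SO(3)$ acts block-diagonally through the real irreducible representation $D^{\ell }(g)\in SO(2\ell +1)$ on the block $(a_{\ell m})_{|m|\leq \ell }$. Two structural observations drive everything: the regularization acts coordinatewise within each block, whereas rotations genuinely mix the coordinates inside a block as soon as $\ell \geq 1$; and the blocks remain independent after thresholding.

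For nonGaussianity, the point is that $S_{\lambda }$ is a nonlinear, non-injective map that creates a point mass at the origin: since $\lambda =\lambda (\delta ,\varepsilon )>0$ by Proposition \ref{equivalence} and $C_{\ell }>0$ for at least one mode,
\begin{equation*}
0<\Pr \{a_{\ell m}^{reg}=0\}=\Pr \{|a_{\ell m}|\leq \lambda \}<1.
\end{equation*}
Now $a_{\ell m}^{reg}=\langle T_{\delta ,\varepsilon }^{reg},Y_{\ell m}\rangle _{L^{2}(S^{2})}$ is a bounded linear functional of the field. If $T_{\delta ,\varepsilon }^{reg}$ were Gaussian, this functional would be a (possibly degenerate) Gaussian random variable; but a Gaussian law on $\mathbb{R}$ is either a Dirac mass or absolutely continuous, and in neither case can it assign an atom of mass strictly between $0$ and $1$ to a single point. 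This contradiction shows $T_{\delta ,\varepsilon }^{reg}$ is nonGaussian.

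Isotropy is the delicate part, because the second-order structure is unaffected: the thresholded coefficients stay centred and uncorrelated with common within-block variance $\widetilde{C}_{\ell }:=E|a_{\ell m}^{reg}|^{2}$ independent of $m$, so by the addition theorem $\mathrm{Var}\,T_{\delta ,\varepsilon }^{reg}(x)=\sum_{\ell }\widetilde{C}_{\ell }(2\ell +1)/4\pi $ is constant in $x$ and the covariance looks perfectly isotropic. One must therefore detect the anisotropy in the full law. Since the law of $(a_{\ell m}^{reg})$ is a product over $\ell $ of the block laws and the action of $g$ is the product of the $D^{\ell }(g)$, invariance of the whole field would force invariance of each block marginal; hence it suffices to break rotational invariance in a single block. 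I would take $\ell =1$, where $D^{1}$ is the defining representation of $SO(3)$ and the block is an isotropic Gaussian vector $v\sim N(0,C_{1}I_{3})$ in $\mathbb{R}^{3}$ on which $g$ acts as an ordinary rotation, while $w:=(S_{\lambda }(v_{1}),S_{\lambda }(v_{2}),S_{\lambda }(v_{3}))$ is its coordinatewise soft-thresholding.

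The crux is then a short measure-theoretic lemma: a rotation-invariant probability measure $\nu $ on $\mathbb{R}^{3}$ assigns zero mass to every punctured hyperplane $u^{\perp }\setminus \{0\}$. This follows by averaging over the sphere: by invariance $\nu (u^{\perp })$ is constant in $u$, while $\int_{S^{2}}\nu (u^{\perp })\,d\sigma (u)=\int_{\mathbb{R}^{3}}\sigma \{u:u\cdot x=0\}\,d\nu (x)=\nu (\{0\})$, because for $x\neq 0$ the set $\{u:u\cdot x=0\}$ is a great circle of vanishing surface measure; hence $\nu (u^{\perp })=\nu (\{0\})$, i.e. $\nu (u^{\perp }\setminus \{0\})=0$. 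But the law of $w$ violates this, since $\Pr \{w_{1}=0,\ w_{2}\neq 0\}=\Pr \{|v_{1}|\leq \lambda \}\,\Pr \{|v_{2}|>\lambda \}>0$ places positive mass on the punctured plane $\{x_{1}=0\}\setminus \{0\}$. Thus the $\ell =1$ block, and therefore $T_{\delta ,\varepsilon }^{reg}$, is anisotropic. The main obstacle I anticipate is precisely this separation of scales: because all first- and second-order features are preserved, the proof cannot proceed through means, variances or the angular power spectrum, and the symmetry breaking has to be located in the singular part of the distribution created by the thresholding atoms.
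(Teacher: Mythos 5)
Your proposal is correct, but it takes a genuinely different route from the paper's. The paper disposes of the theorem in a few lines by invoking the Baldi--Marinucci characterization (Theorem \ref{BaldiM}): the regularized coefficients $a_{\ell m}^{reg}=j(a_{\ell m};\lambda)$ are independent (the problem decouples across modes, exactly as you observe) and nonGaussian (the thresholding atom at $0$), and since independent spherical harmonic coefficients of an isotropic field are necessarily Gaussian, anisotropy follows at once, for all multipole blocks simultaneously. You instead break the symmetry by hand in a single block: isotropy of the field forces invariance of each block marginal under $D^{\ell}(g)$, and your averaging lemma --- a rotation-invariant probability measure $\nu$ on $\mathbb{R}^{3}$ satisfies $\nu(u^{\perp})=\int \sigma\{u:u\cdot x=0\}\,d\nu(x)=\nu(\{0\})$, hence charges no punctured hyperplane --- is contradicted by the positive mass that thresholding places on $\{x_{1}=0\}\setminus\{0\}$. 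Your route is elementary and self-contained where the paper leans on a nontrivial external characterization theorem; conversely, the paper's argument covers every $\ell$ at once and flows directly into the quantitative computations ($\gamma_{0},\gamma_{1}$, trispectra) that occupy the rest of the paper. Your nonGaussianity argument (the linear functional $\langle T^{reg}_{\delta,\varepsilon},Y_{\ell m}\rangle$ carries an atom of mass strictly between $0$ and $1$, which no Gaussian law admits) is sound, and is in fact more explicit than the paper's brief ``nonlinear, hence nonGaussian'' remark, which is only justified later when the mixture law of $a_{\ell 0}^{reg}$ is written down.

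One bookkeeping slip should be fixed. Having written the complex soft-thresholding $S_{\lambda}(a_{\ell m})=\frac{a_{\ell m}}{|a_{\ell m}|}\left(|a_{\ell m}|-\lambda\right)_{+}$, you then run the $\ell=1$ argument with \emph{coordinatewise} thresholding in the real basis; that is the paper's real-valued scheme $T^{reg\ast}$, not the complex scheme $T^{reg}_{\delta,\varepsilon}$ named in the statement, since in the complex scheme the $m=\pm 1$ pair is thresholded jointly through its modulus $\rho_{11}$ rather than coordinate by coordinate. The repair is one line: $\Pr\{a_{10}^{reg}=0,\ \rho_{11}>\lambda\}=p_{1}(\lambda)\Pr\{\rho_{11}>\lambda\}>0$ still places mass on the punctured plane orthogonal to the $m=0$ axis, so your lemma applies verbatim. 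Relatedly, your claim that the second-order structure is ``perfectly isotropic'' is true only in the real-valued scheme: in the complex scheme $\gamma_{0}\neq\gamma_{1}$, so anisotropy is already visible in the variance profile $E\{T_{\ell}^{reg}(\theta,\phi)^{2}\}$ --- indeed the paper exploits exactly this, showing it oscillates like $P_{\ell}^{2}(\cos\theta)$ as $\lambda/\sqrt{C_{\ell}}\rightarrow\infty$. Your fourth-order/singular-part detection is genuinely needed only for $T^{reg\ast}$, which is where the paper, too, must resort to the trispectrum.
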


To make this claim more concrete, we shall also focus on the normalized
fourth-moment%
\begin{equation*}
\kappa _{\ell }(\theta ,\phi ):=\frac{E\{T_{\ell }^{reg}(\theta ,\phi )^{4}\}%
}{(E\{T_{\ell }^{reg}(0,0)^{2}\})^{2}}\text{ ,}
\end{equation*}%
which of course should be constant for all $(\theta ,\phi )\in S^{2}$ under
isotropy, and identically equal to 3 under Gaussianity. On the contrary, we
will provide an analytic expression for the value of $\kappa _{\ell }(\theta
,\phi )$ at the North Pole $N:(\theta ,\phi )=(0,0)$, as a function of the
angular power spectrum $C_{\ell }$ and the penalization parameter $\lambda
(\delta ,\varepsilon ).$ In particular, for the so-called complex-valued
regularization procedure (to be defined below), we shall show that

\begin{theorem}
\label{theo-trisp1} As $\lambda/\sqrt{C_\ell} \to \infty$, we have
\begin{equation*}
\lim_{\lambda/\sqrt{C_\ell} \to \infty} \frac{\log k_\ell(0,0)}{%
\lambda^2/C_{\ell}}=1.
\end{equation*}
\end{theorem}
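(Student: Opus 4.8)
The plan is to exploit the orthonormality of the spherical harmonic dictionary to reduce the regularized problem to coordinate-wise soft-thresholding, and then to localize everything at the North Pole, where only the $m=0$ coefficient survives. Since $\Psi$ is orthonormal, Parseval gives $\|T-\Psi\alpha\|_{L^2(S^2)}^2=\sum_{\ell m}|a_{\ell m}-\alpha_{\ell m}|^2$, so the functional in (\ref{min4}) decouples across coefficients and each $\alpha_{\ell m}$ is obtained by minimizing $\lambda|\alpha_{\ell m}|+\frac12|a_{\ell m}-\alpha_{\ell m}|^2$ separately. For the complex-valued procedure this yields the complex soft-thresholding $a_{\ell m}^{reg}=\frac{a_{\ell m}}{|a_{\ell m}|}(|a_{\ell m}|-\lambda)_+$, so that in particular $|a_{\ell m}^{reg}|=(|a_{\ell m}|-\lambda)_+$.

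First I would evaluate $T_\ell^{reg}$ at the North Pole. Using $Y_{\ell m}(0,0)=0$ for $m\neq 0$ and $Y_{\ell 0}(0,0)=\sqrt{(2\ell+1)/4\pi}$, only the $m=0$ term contributes, so that $T_\ell^{reg}(0,0)$ is, up to the deterministic factor $Y_{\ell 0}(0,0)$, a single soft-thresholded coefficient. Both the numerator $E\{T_\ell^{reg}(0,0)^4\}$ and the denominator $(E\{T_\ell^{reg}(0,0)^2\})^2$ are then proportional, through the same power of $Y_{\ell 0}(0,0)$, to moments of the soft-thresholded variable, so these deterministic factors cancel in $\kappa_\ell(0,0)$ and the normalized fourth moment reduces to a constant times $E[\rho^4]/(E[\rho^2])^2$, where $\rho=(R-\lambda)_+$ and $R=|a_{\ell 0}|$ has the magnitude law of a complex Gaussian with $ER^2=C_\ell$.

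Next I would carry out the asymptotic evaluation of the surviving moments. Writing $E[\rho^k]=\int_\lambda^\infty (r-\lambda)^k f_R(r)\,dr$ with $f_R(r)=(2r/C_\ell)\exp(-r^2/C_\ell)$, I would substitute $u=r-\lambda$ and extract the factor $\exp(-\lambda^2/C_\ell)$; a Watson/Laplace estimate of the remaining integral, dominated by the endpoint $u=0$, shows that $E[\rho^k]=\exp(-\lambda^2/C_\ell)\,P_k(\lambda,C_\ell)$, where $P_k$ is a subexponential (rational) prefactor. The decisive point is the exponential rate: the magnitude of a complex Gaussian has the lighter tail $\Pr\{R>\lambda\}\sim\exp(-\lambda^2/C_\ell)$, rather than the $\exp(-\lambda^2/2C_\ell)$ one would get from a real coefficient, and this rate $\lambda^2/C_\ell$ governs every moment. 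Consequently $E[\rho^4]/(E[\rho^2])^2$ has exponential rate $\exp(+\lambda^2/C_\ell)$, the squared second moment contributing $\exp(-2\lambda^2/C_\ell)$ against the single $\exp(-\lambda^2/C_\ell)$ of the fourth moment.

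Finally, taking logarithms gives $\log\kappa_\ell(0,0)=\lambda^2/C_\ell+O(\log(\lambda/\sqrt{C_\ell}))$, since all constant and polynomial prefactors enter only logarithmically; dividing by $\lambda^2/C_\ell$ and letting $\lambda/\sqrt{C_\ell}\to\infty$ yields the claimed limit $1$. I expect the main obstacle to be the rigorous control of the prefactors $P_k$ in the Watson estimate, namely showing that the endpoint contribution dominates uniformly and that the sub-leading corrections remain subexponential, so that their logarithm is genuinely $o(\lambda^2/C_\ell)$ and cannot perturb the limit. Everything else, including the cancellation of the $Y_{\ell 0}(0,0)$ factors and of the angular constants, is robust, which is why the limit is exactly $1$ regardless of whether the fourth moment is taken of the real part or of the modulus.
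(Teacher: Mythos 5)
Your reduction to coordinate-wise soft-thresholding and your localization at the North Pole are both correct and agree with the paper: since $Y_{\ell m}(0,0)=0$ for $m\neq 0$, the deterministic factor $Y_{\ell 0}(0,0)=\sqrt{(2\ell+1)/4\pi}$ cancels in the ratio and $\kappa_\ell(0,0)$ reduces to the kurtosis of the single soft-thresholded coefficient $a_{\ell 0}^{reg}$. The genuine gap is in the distributional input at exactly this coefficient: you take $R=|a_{\ell 0}|$ to have "the magnitude law of a complex Gaussian," i.e.\ a Rayleigh tail $\Pr\{R>\lambda\}\sim\exp(-\lambda^2/C_\ell)$. But for a real-valued isotropic field the reality condition $a_{\ell,-m}=(-1)^m\overline{a}_{\ell m}$ forces $a_{\ell 0}$ to be a \emph{real} $N(0,C_\ell)$ variable: the paper's Lemma in Section 2 states the Rayleigh density only for $m\neq 0$, and the subsequent Remark observes that the $m=0$ solution coincides in the complex and real schemes precisely because this coefficient is real. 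With the correct law, your own Watson/Laplace argument goes through verbatim but produces $E[\rho^k]=\exp\{-\lambda^2/(2C_\ell)\}\,P_k(\lambda,C_\ell)$ with subexponential prefactors $P_k$, hence
\begin{equation*}
\log\kappa_\ell(0,0)=\frac{\lambda^2}{2C_\ell}+O\Big(\log \frac{\lambda}{\sqrt{C_\ell}}\Big)\text{ ,}
\end{equation*}
so that your method yields the limit $1/2$, not $1$, against the normalization $\lambda^2/C_\ell$. Your closing remark that the answer is the same "whether the fourth moment is taken of the real part or of the modulus" is exactly where the factor of two hides: for $m=0$ there is no modulus to take, and the tail rate is $\lambda^2/(2C_\ell)$, not $\lambda^2/C_\ell$.

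For comparison, the paper's treatment of the North Pole is exact rather than asymptotic: it computes $E\{(a_{\ell 0}^{reg})^2\}$ and $E\{(a_{\ell 0}^{reg})^4\}$ in closed form via incomplete Gamma functions, obtains $\kappa_\ell(0,0)=\sqrt{\pi/2}\,\psi(\lambda/\sqrt{C_\ell})$ with $\psi$ a ratio of the integrals $\int_0^\infty v^k\exp\{-\frac12(v+\nu_\ell)^2\}dv$, and then expands $\mathrm{Erfc}$ to find $\psi(x)\sim \frac{15}{4}x^3 e^{x^2/2}$ — in particular the exponential rate coming out of the pole computation is $e^{x^2/2}$, consistent with the limit $1/2$ just described. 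The rate $\lambda^2/C_\ell$ in the displayed statement is instead the one governing the $m\neq 0$ coefficients, whose moduli genuinely are soft-thresholded Rayleigh variables: this is the content of the paper's later Lemma, $\lim\,\exp\{-\lambda^2/C_\ell\}\,E\{|a_{\ell m}^{reg}|^4\}/[E\{|a_{\ell m}^{reg}|^2\}]^2=6$, which is exactly what your Rayleigh-based argument proves. So, as written, your proposal establishes that Lemma (the $m\neq 0$ modulus kurtosis) rather than the North-Pole kurtosis: to reconcile the theorem as displayed with the pole computation one must either replace $\lambda^2/C_\ell$ by $\lambda^2/(2C_\ell)$ or read $k_\ell(0,0)$ as referring to the $m\neq 0$ coefficients, and in either reading the step where you assign the complex-Gaussian law to $a_{\ell 0}$ must be repaired.
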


Because the sequence $C_{\ell }$ is summable, the previous result entails
that the kurtosis of the field diverges exponentially at the North Pole as $%
\ell \rightarrow \infty $, showing an extremely nonGaussian behavior at high
frequencies. Under the same setting, we shall show that

\begin{theorem}
\label{theo-trisp2} As $\lambda /\sqrt{C_{\ell }}\rightarrow \infty ,$ we
have that%
\begin{equation*}
\lim_{\ell \rightarrow \infty }\frac{\kappa _{\ell }(\theta ,\phi )}{\kappa
_{\ell }(0,0)P_{\ell }^{4}(\cos \theta )}=1\text{ , for all }(\theta ,\phi
)\in S^{2}\text{ ,}
\end{equation*}%
$P_{\ell }(.)$ denoting the usual Legendre polynomial.
\end{theorem}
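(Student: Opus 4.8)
The plan is to reduce everything to the angular structure of the numerator of $\kappa _{\ell }$. Since the denominator $(E\{T_{\ell }^{reg}(0,0)^{2}\})^{2}$ is a constant in $(\theta ,\phi )$, it suffices to analyze $E\{T_{\ell }^{reg}(\theta ,\phi )^{4}\}$. Because the spherical harmonics are orthonormal, the quadratic part of (\ref{min4}) is diagonal and the regularization acts coefficientwise as soft-thresholding, so I would decompose the real field into independent, symmetric, mean-zero pieces: set $X_{0}:=a_{\ell 0}^{reg}Y_{\ell 0}(\theta ,\phi )$ and $X_{m}:=2\,\mathrm{Re}\{a_{\ell m}^{reg}Y_{\ell m}(\theta ,\phi )\}$ for $m\geq 1$, using the reality condition $a_{\ell ,-m}=(-1)^{m}\overline{a}_{\ell m}$, so that $T_{\ell }^{reg}(\theta ,\phi )=\sum_{m=0}^{\ell }X_{m}$. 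The $X_{m}$ are independent across $m$ (the $a_{\ell m}$ are), and each is symmetric because the phase of $a_{\ell m}$ is uniform; averaging over these phases removes $\phi $, which already shows $\kappa _{\ell }(\theta ,\phi )$ depends only on $\theta $.

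Next I would expand the fourth moment. By independence and symmetry every mixed moment containing an odd exponent vanishes, leaving
\begin{equation*}
E\{T_{\ell }^{reg}(\theta ,\phi )^{4}\}=\sum_{m=0}^{\ell }E\{X_{m}^{4}\}+3\sum_{m\neq m^{\prime }}E\{X_{m}^{2}\}E\{X_{m^{\prime }}^{2}\}\text{ .}
\end{equation*}
The zonal term is $E\{X_{0}^{4}\}=E\{(a_{\ell 0}^{reg})^{4}\}\,Y_{\ell 0}(\theta ,\phi )^{4}$, and since $Y_{\ell 0}(\theta ,\phi )=\sqrt{(2\ell +1)/4\pi }\,P_{\ell }(\cos \theta )$ while $Y_{\ell 0}(0,0)=\sqrt{(2\ell +1)/4\pi }$, we get the clean identity $E\{X_{0}^{4}\}=E\{T_{\ell }^{reg}(0,0)^{4}\}\,P_{\ell }^{4}(\cos \theta )$. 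Hence the theorem is equivalent to the statement $E\{T_{\ell }^{reg}(\theta ,\phi )^{4}\}=E\{X_{0}^{4}\}(1+o(1))$: the $m=0$ self-term must asymptotically swamp every other self-term and every cross-term.

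The mechanism driving this domination is that the $m=0$ coefficient is real Gaussian whereas the $m\neq 0$ coefficients carry the complex (modulus) law, so after thresholding at level $\lambda $ the zonal moments are exponentially larger, $E\{(a_{\ell 0}^{reg})^{k}\}$ decaying like $e^{-\lambda ^{2}/(2C_{\ell })}$ against $e^{-\lambda ^{2}/C_{\ell }}$ for the off-zonal ones; this is precisely the tail comparison underlying Theorem~\ref{theo-trisp1}, which I would reuse (with the constants of the complex-valued procedure). Consequently each competing self-term satisfies $E\{X_{m}^{4}\}/E\{X_{0}^{4}\}\leq e^{-\lambda ^{2}/(2C_{\ell })}\cdot \mathrm{poly}(\ell )\cdot |Y_{\ell m}|^{4}/Y_{\ell 0}^{4}$, while the cross-terms are suppressed by the even smaller factor $e^{-\lambda ^{2}/C_{\ell }}$. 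The angular factors and the number of summands are controlled by a power of $\ell $ through the addition theorem $\sum_{m=-\ell }^{\ell }|Y_{\ell m}(\theta ,\phi )|^{2}=(2\ell +1)/4\pi $, which bounds $\sum_{m\geq 1}|Y_{\ell m}|^{4}$ polynomially.

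To close the estimate I would let $\ell \rightarrow \infty $ under the standing assumption $\lambda /\sqrt{C_{\ell }}\rightarrow \infty $: since $\{C_{\ell }\}$ is summable, $\lambda ^{2}/C_{\ell }$ grows faster than any power of $\log \ell $, so the exponential gain beats all polynomial-in-$\ell $ losses and the remainder is $o(E\{X_{0}^{4}\})$; dividing by the $\theta $-independent denominator yields $\kappa _{\ell }(\theta ,\phi )/[\kappa _{\ell }(0,0)P_{\ell }^{4}(\cos \theta )]\rightarrow 1$. I expect the main obstacle to be quantitative rather than structural: the comparison is against $E\{X_{0}^{4}\}\propto P_{\ell }^{4}(\cos \theta )$, which degenerates at the zeros of the Legendre polynomial, where the zonal term momentarily loses to the off-zonal ones. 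The estimate must therefore be made uniform in $\theta $ on sets bounded away from these zeros—generically $P_{\ell }^{4}(\cos \theta )$ is of order $\ell ^{-2}$, producing an $O(\ell ^{2})$ factor that the suppression $e^{-\lambda ^{2}/(2C_{\ell })}$ must still dominate—and one has to verify that the polynomial growth of the angular factors never outpaces that exponential decay.
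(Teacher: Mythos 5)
Your proposal follows essentially the same route as the paper's own proof: expand $E\{T_{\ell }^{reg}(\theta ,\phi )^{4}\}$ into diagonal fourth-moment terms plus cross-products of second moments, identify the $m=0$ term with $E\{T_{\ell }^{reg}(0,0)^{4}\}P_{\ell }^{4}(\cos \theta )$, and suppress every other term via the exponential gap between the real zonal coefficient (moments of order $e^{-\lambda ^{2}/(2C_{\ell })}$) and the Rayleigh-modulus off-zonal ones (order $e^{-\lambda ^{2}/C_{\ell }}$) --- this is exactly the paper's use of (\ref{ghione3}) together with the bound $E\{|a_{\ell m}^{reg}|^{4}\}/E\{(a_{\ell 0}^{reg})^{4}\}\leq K\exp \{-\lambda ^{2}(1-\varepsilon )/(2C_{\ell })\}$ --- with the polynomially many summands and angular factors beaten because $C_{\ell }\to 0$ makes the suppression superexponential in $\ell $. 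Your closing caveat about the zeros of $P_{\ell }(\cos \theta )$, where the normalizing factor degenerates and the stated pointwise limit requires staying away from those zeros, is a genuine subtlety that the paper's proof passes over in silence, so flagging it is a merit of your write-up rather than a gap.
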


The latter result entails that the so-called trispectrum of the random field
is not constant over the sphere, as required by isotropy, but it rather
exhibits anisotropic oscillations. Under the so-called real-norm
regularization procedure (to be defined later), the asymptotic behavior is
slightly different, but anisotropy remains and the oscillations of the
trispectrum can again be predicted analytically, see below.

One heuristic intuition behind these results can be summarized as follows.
To understand the relationship between convex regularization and isotropy,
it can be convenient to view a problem like (\ref{min1}) as resulting from
the maximization of a Bayesian posterior distribution on the spherical
harmonic coefficients $a_{\ell m}$, assuming a Laplacian/Exponential prior
on these coefficients. We can now recall some earlier results from \cite%
{BaMa} (see also \cite{BMV,MarPecBook,BT}), showing that a random field
generated by sampling such independent non-Gaussian coefficients is
necessarily anisotropic; it can then be natural to conjecture that this
implicit anisotropy in the prior fields will persist in the regularized
maps. However, while this interpretation led us to conjecture the results of
this paper, it should be noted that it plays no role in the arguments that
follow. We refer again to \cite{feeney} for further discussion on these
issues and for a large set of numerical results.

The plan of the paper is as follows: in Section \ref{regularized}, we
discuss regularized estimates in a stochastic setting, and we establish
Proposition \ref{equivalence}; in Section \ref{anisotropy}, we prove that
regularized fields with the spherical harmonics dictionary are necessarily
anisotropic and nonGaussian, while in Section \ref{trispectra} the
trispectra and their asymptotic behavior are studied. Some final remarks are
collected in Section \ref{conclusions}.

\subsection{Acknowledgements}

We thank Stephen Feeney, Jason McEwen, Hiranya Peiris, Jean-Luc Starck and
Benjamin Wandelt for useful and lively discussions. This research is
supported by the European Research Council under the European Community
Seventh Framework Programme (FP7/2007-2013) ERC grant agreement no. 277742
\emph{Pascal}.

\section{$\ell ^{1}-$Regularized Random Fields \label{regularized}}

As motivated in the Introduction, we wish to consider the $\ell _{1}$
minimization problem
\begin{equation}
\left\{ a_{\ell m}^{reg}\right\} =\arg \min_{{\{a_{\ell m}\}}}\left\{
\lambda \sum_{\ell m}|a_{\ell m}|+\frac{1}{2}\left\Vert T^{obs}-\sum_{\ell
m}a_{\ell m}Y_{\ell m}\right\Vert _{L^{2}{(S^{2})}}^{{2}}\right\} \text{ ,}
\label{minprob0}
\end{equation}%
where as usual%
\begin{equation}
T^{obs}=\sum_{\ell m}a_{\ell m}^{obs}Y_{\ell m}\text{ .}  \label{specrap}
\end{equation}%
and {\ $\left\Vert .\right\Vert _{L^{2}(S^{2})}$ denotes the $L^{2}$ -norm
for functions on the sphere , e.g.,
\begin{equation*}
\left\Vert T^{obs}-\sum_{\ell m}a_{\ell m}Y_{\ell m}\right\Vert
_{L^{2}(S^{2})}^{2}=\int_{S^{2}}\left\vert T^{obs}-\sum_{\ell m}a_{\ell
m}Y_{\ell m}\right\vert ^{2}dx=\sum_{\ell m}\left\vert a_{\ell
m}^{obs}-a_{\ell m}\right\vert ^{2}\text{ .}
\end{equation*}%
} In equations (\ref{minprob0}) and (\ref{specrap}), we take as {usual} $%
\left\{ Y_{\ell m}\right\} $ to denote complex-valued spherical harmonics,
so that $Y_{\ell m}=(-1)^{m}\overline{Y}_{\ell {,-m}},$ the bar denoting
complex conjugations, and $\left\vert .\right\vert $ the complex modulus $%
|a_{\ell m}|:=\sqrt{[{\func{Re}}(a_{\ell m})]^{2}+[{\func{Im}}(a_{\ell
m})]^{2}};$ we label this case as the \emph{complex-valued regularization
scheme}. As an alternative, an orthonormal expansion into a real-valued
basis can be obtained by simply taking
\begin{equation}
T^{obs}=\sum_{\ell m}a_{\ell m}^{obs;\mathcal{R}}Y_{\ell m}^{\mathcal{R}}%
\text{ ,}  \label{specrap_real}
\end{equation}%
where {\ $a_{\ell 0}^{obs;\mathcal{R}}=a_{\ell 0}^{obs}$, $Y_{\ell 0}^{%
\mathcal{R}}=Y_{\ell 0}$, }
\begin{equation*}
a_{\ell m}^{obs;\mathcal{R}}=\sqrt{2}{\func{Re}}({a_{\ell m}^{obs}})\text{
for }{m>0}\text{ , }a_{\ell m}^{\mathcal{R}}={-}\sqrt{2}{\func{Im}}({a_{\ell
,-m}^{obs}})\text{ for }m<0\text{ ,}
\end{equation*}%
and%
\begin{equation*}
Y_{\ell m}^{\mathcal{R}}=\sqrt{2}{\func{Re}}(Y_{\ell m})\text{ for }{m>0}%
\text{ , }Y_{\ell m}^{\mathcal{R}}=\sqrt{2}{\func{Im}}(Y_{\ell {,-m}})\text{
for }m<0\text{ .}
\end{equation*}%
We are then led to the \emph{real-valued regularization scheme}%
\begin{equation}
\left\{ a_{\ell m}^{reg\ast }\right\} =\arg \min_{{\left\{ a_{\ell
m}\right\} }}\left\{ \lambda \sum_{\ell m}\left\vert a_{\ell m}^{\mathcal{R}%
}\right\vert +\frac{1}{2}\sum_{\ell m}\left\vert a_{\ell m}^{obs;\mathcal{R}%
}-a_{\ell m}^{\mathcal{R}}\right\vert ^{2}\right\} \text{ ,}
\label{minprob0_real}
\end{equation}%
where $\left\vert .\right\vert $ is standard absolute value for real
numbers. We shall consider both schemes in what follows.

The following two lemmas are standard, but nevertheless we report their
straightforward proofs for completeness. We shall use below the standard
polar coordinates for complex-valued random variables%
\begin{equation*}
a_{\ell m}=\rho _{\ell m}\exp (i\psi _{\ell m})\text{ ,}
\end{equation*}%
\begin{equation*}
\rho _{\ell m}:=\sqrt{[({\func{Re}}(a_{\ell m})]^{2}+[\func{Im}(a_{\ell
m})]^{2}}\text{ , }\psi _{\ell m}:=\arctan \frac{{\func{Re}}(a_{\ell m})}{{%
\func{Im}}(a_{\ell m})}\text{ .}
\end{equation*}%
Also, we denote by $\left\vert x\right\vert _{+}$ the positive part of the
real number $x.$

\begin{lemma}
If $T^{obs}$ is Gaussian and isotropic, we have that for $m\neq 0,$ $a_{\ell
m}^{obs}=\rho _{\ell m}^{obs}\exp (i\psi _{\ell m}^{obs}),$ where $\psi
_{\ell m}^{obs}\sim U[0,2\pi ]$ and the density of ${\rho _{\ell m}^{obs}}$
is given by%
\begin{equation*}
\Pr \left\{ {\rho _{\ell m}^{obs}} \leq R\right\} =\int_{0}^{R}f_{\rho ;\ell
}(r)dr\text{ ; }f_{\rho ;\ell }(r)=2\frac{r}{C_{\ell }}\exp\{-\frac{r^{2}}{%
C_{\ell }}\}\text{ .}
\end{equation*}
\end{lemma}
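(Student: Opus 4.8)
The plan is to exploit the circular symmetry of the complex coefficient $a_{\ell m}^{obs}$ in the plane, which for $m\neq 0$ follows from combining the isotropic covariance structure with the reality of the field. First I would record that Gaussianity of $T^{obs}$ makes the pair $(\mathrm{Re}\,a_{\ell m}^{obs},\mathrm{Im}\,a_{\ell m}^{obs})$ jointly Gaussian, and that isotropy gives $E\,a_{\ell m}^{obs}\overline{a_{\ell m}^{obs}}=C_{\ell}$. The decisive computation is the vanishing of the pseudo-covariance: writing $a_{\ell m}^{obs}=(-1)^{m}\overline{a_{\ell,-m}^{obs}}$ (the reality constraint stated in the text), one obtains $E(a_{\ell m}^{obs})^{2}=(-1)^{m}E\,a_{\ell m}^{obs}\overline{a_{\ell,-m}^{obs}}=(-1)^{m}C_{\ell}\delta_{m}^{-m}=0$ whenever $m\neq 0$.

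Second, I would translate these two moment identities into the law of the real and imaginary parts. Setting $X=\mathrm{Re}\,a_{\ell m}^{obs}$ and $Y=\mathrm{Im}\,a_{\ell m}^{obs}$, the identity $E\,a\bar a=C_{\ell}$ gives $E X^{2}+E Y^{2}=C_{\ell}$, while $E\,a^{2}=0$ splits into $E X^{2}-E Y^{2}=0$ and $E XY=0$. Hence $E X^{2}=E Y^{2}=C_{\ell}/2$ and $X,Y$ are uncorrelated; being jointly Gaussian, they are in fact independent $N(0,C_{\ell}/2)$ variables. This is the step that genuinely uses both hypotheses together with the constraint $m\neq 0$ (for $m=0$ the coefficient is real and the statement is vacuous), so I regard it as the conceptual core of the lemma, even though each individual computation is short; the main thing to get right is the bookkeeping of the reality relation that forces circular symmetry.

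Finally, it remains a routine change of variables. The joint density of $(X,Y)$ is $\frac{1}{\pi C_{\ell}}\exp\{-(x^{2}+y^{2})/C_{\ell}\}$; passing to polar coordinates $x=r\cos\psi$, $y=r\sin\psi$ with Jacobian $r$ yields the joint density $\frac{r}{\pi C_{\ell}}\exp\{-r^{2}/C_{\ell}\}$ on $r\ge 0$, $\psi\in[0,2\pi)$. This factorizes as a product of a uniform density $1/(2\pi)$ in $\psi$ and the marginal $f_{\rho;\ell}(r)=\frac{2r}{C_{\ell}}\exp\{-r^{2}/C_{\ell}\}$ obtained by integrating out $\psi$; integrating $f_{\rho;\ell}$ over $[0,R]$ gives the claimed distribution function, while independence of modulus and phase is immediate from the factorization. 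The only mild care needed is to observe that the particular definition $\psi_{\ell m}=\arctan(\mathrm{Re}/\mathrm{Im})$ used in the text does not affect the conclusion, since any fixed measurable reparametrization of a uniform-on-the-circle angle is again uniform.
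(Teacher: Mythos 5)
Your proof is correct, and it arrives at the same destination as the paper's, but by a more self-contained route. The paper's proof is essentially one line: it takes as known, from the standard theory of Gaussian isotropic fields (cf.\ \cite{MarPecBook}), that $\mathrm{Re}\,a_{\ell m}^{obs}$ and $\mathrm{Im}\,a_{\ell m}^{obs}$ are independent $N(0,C_{\ell}/2)$ for $m\neq 0$, writes $(\rho_{\ell m}^{obs})^{2}/(C_{\ell}/2)$ as a $\chi^{2}_{2}$ variable, reads off the CDF $1-\exp\{-R^{2}/C_{\ell}\}$ (the paper's displayed formula has a sign typo there, which your computation implicitly corrects), and differentiates; it does not address the uniform-phase claim at all. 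You instead derive the variance splitting from first principles via the pseudo-covariance computation $E(a_{\ell m}^{obs})^{2}=(-1)^{m}C_{\ell}\,\delta_{m}^{-m}=0$ --- which is exactly where the hypothesis $m\neq 0$ enters, and which you correctly identify as the conceptual core --- and then obtain the Rayleigh modulus and the uniform phase simultaneously from the polar factorization of the bivariate Gaussian density. What your version buys is completeness: it actually proves the assertion $\psi_{\ell m}^{obs}\sim U[0,2\pi]$ and the independence of modulus and phase, both of which the paper's chi-squared argument leaves unproved; what the paper's version buys is brevity, since it needs only the distribution of $\rho_{\ell m}^{obs}$ for the computations that follow. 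One small inaccuracy in your closing remark: it is not true that \emph{any} fixed measurable reparametrization of a uniform angle is again uniform (e.g.\ $\psi\mapsto\psi^{2}/(2\pi)$ is not); what rescues the paper's nonstandard convention $\arctan(\mathrm{Re}/\mathrm{Im})$ is that it corresponds to the rigid motion $\psi\mapsto\pi/2-\psi$ of the circle, which is Lebesgue-measure-preserving, so you should invoke measure preservation rather than mere measurability (and strictly speaking an $\arctan$ takes values in an interval of length $\pi$, so the paper's notation is itself loose on this point --- none of which affects the validity of the lemma).
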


\begin{proof}
It suffices to notice that%
\begin{equation*}
\Pr \left\{ {\rho _{\ell m}^{obs}}\leq R\right\} =\Pr \left\{ {(\rho _{\ell
m}^{obs})}^{2}\leq R^{2}\right\}
\end{equation*}%
\begin{equation*}
=\Pr \left\{ \frac{1}{2}\frac{[{\func{Re}}({a_{\ell m}^{obs}})]^{2}+[{\func{%
Im}}({a_{\ell m}^{obs}})]^{2}}{C_{\ell }/2}\leq \frac{R^{2}}{C_{\ell }}%
\right\}
\end{equation*}%
\begin{equation*}
=\Pr \left\{ \frac{\chi _{2}^{2}}{2}\leq \frac{R^{2}}{C_{\ell }}\right\}
=1-\exp (\frac{R^{2}}{C_{\ell }})\text{ ,}
\end{equation*}%
{\ where $\chi _{2}^{2}$ is a Chi-squared with 2 degrees of freedom.} Whence
the result follows from differentiation.
\end{proof}

\begin{lemma}
The solution to (\ref{minprob0}) is provided by%
\begin{equation*}
a_{\ell m}^{reg}:={\func{Re}}(a_{\ell m}^{reg})+i{\func{Im}}(a_{\ell
m}^{reg})\text{ ,}
\end{equation*}%
where, for $\ell =1,...,\ell _{\max }$ and $m=-\ell ,...,\ell $ we have%
\begin{equation*}
{\func{Re}}(a_{\ell m}^{reg})=\left\vert \rho _{\ell m}^{obs}-\lambda
\right\vert _{+}\cos \psi _{\ell m}^{obs}\text{ , }{\func{Im}}(a_{\ell
m}^{reg})=\left\vert \rho _{\ell m}^{obs}-\lambda \right\vert _{+}\sin \psi
_{\ell m}^{obs}\text{ .}
\end{equation*}
\end{lemma}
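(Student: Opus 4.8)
The plan is to exploit the orthonormality of the spherical harmonics to decouple the minimization (\ref{minprob0}) into a family of independent scalar problems, one for each pair $(\ell,m)$, and then to solve each of these by an elementary one-variable argument. First I would invoke the Parseval identity displayed just after (\ref{specrap}) to rewrite the objective as
\[
\lambda\sum_{\ell m}|a_{\ell m}|+\tfrac12\Big\|T^{obs}-\sum_{\ell m}a_{\ell m}Y_{\ell m}\Big\|_{L^2(S^2)}^2
=\sum_{\ell m}\Big(\lambda|a_{\ell m}|+\tfrac12|a_{\ell m}^{obs}-a_{\ell m}|^2\Big).
\]
Since distinct summands involve disjoint variables, the global minimizer is obtained by minimizing each term $g_{\ell m}(a):=\lambda|a|+\tfrac12|a_{\ell m}^{obs}-a|^2$ separately over $a\in\mathbb{C}$.

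Next I would pass to the polar coordinates $a=\rho e^{i\psi}$ and $a_{\ell m}^{obs}=\rho_{\ell m}^{obs}e^{i\psi_{\ell m}^{obs}}$ introduced above, so that
\[
g_{\ell m}=\lambda\rho+\tfrac12\big(\rho^2-2\rho\,\rho_{\ell m}^{obs}\cos(\psi-\psi_{\ell m}^{obs})+(\rho_{\ell m}^{obs})^2\big).
\]
For any fixed $\rho\ge0$ this is minimized in $\psi$ by aligning the phases, i.e.\ by choosing $\psi=\psi_{\ell m}^{obs}$, which makes the cross term as negative as possible; this is the step that forces $\psi_{\ell m}^{reg}=\psi_{\ell m}^{obs}$ and hence fixes the argument of $a_{\ell m}^{reg}$.

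It then remains to minimize the one-dimensional convex function $\rho\mapsto\lambda\rho+\tfrac12(\rho-\rho_{\ell m}^{obs})^2$ over the half-line $\rho\ge0$. Differentiating on $\rho>0$ gives the stationarity condition $\lambda+\rho-\rho_{\ell m}^{obs}=0$, whose solution $\rho=\rho_{\ell m}^{obs}-\lambda$ is admissible precisely when $\rho_{\ell m}^{obs}\ge\lambda$; when $\rho_{\ell m}^{obs}<\lambda$ the derivative is strictly positive throughout $\rho\ge0$, so the constrained minimum is attained at the boundary $\rho=0$. In both cases the minimizer is the soft-threshold $\rho_{\ell m}^{reg}=|\rho_{\ell m}^{obs}-\lambda|_+$, and combining this with the phase identity yields $a_{\ell m}^{reg}=|\rho_{\ell m}^{obs}-\lambda|_+\,e^{i\psi_{\ell m}^{obs}}$, which is exactly the stated formula for its real and imaginary parts.

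The argument is entirely routine; the only point requiring a little care is the non-smoothness of $|a|$ at the origin, which I handle above by treating the case $\rho_{\ell m}^{obs}<\lambda$ separately (equivalently, by using the subdifferential of $|\cdot|$ at $0$). I expect no genuine obstacle, beyond observing that the phase $\psi_{\ell m}^{reg}$ is left undetermined exactly when $\rho_{\ell m}^{reg}=0$, which is immaterial since the corresponding coefficient then vanishes.
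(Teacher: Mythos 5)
Your proposal is correct and follows essentially the same route as the paper: decoupling the objective coefficient-by-coefficient via Parseval, aligning the phase $\psi_{\ell m}=\psi_{\ell m}^{obs}$ to eliminate the cross term, and then soft-thresholding the modulus by the one-dimensional convex minimization over $\rho\geq 0$ with the case split at $\rho_{\ell m}^{obs}=\lambda$. Your additional remark that the phase is undetermined exactly when the thresholded modulus vanishes is a harmless refinement the paper leaves implicit.
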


\begin{proof}
We can rewrite (\ref{minprob0}) as
\begin{eqnarray}
\left\{ a_{\ell m}^{reg}\right\} &=&\arg \min_{{\left\{ a_{\ell m}\right\} }%
}\left\{ \lambda \sum_{\ell m}\left\vert a_{\ell m}\right\vert +\frac{1}{2}%
\left\Vert T^{obs}-\sum_{\ell m}a_{\ell m}Y_{\ell m}\right\Vert
_{L^{2}(S^{2})}^{2}\right\}  \notag \\
&=&\arg \min_{{\left\{ a_{\ell m}\right\} }}\left\{ \lambda \sum_{\ell
m}\left\vert a_{\ell m}\right\vert +\frac{1}{2}\sum_{\ell m}\left\vert
a_{\ell m}^{obs}-a_{\ell m}\right\vert ^{2}\right\} \text{.}
\label{jointinp}
\end{eqnarray}%
We can hence rewrite%
\begin{align*}
&\frac{1}{2}\sum_{\ell m}\left\vert a_{\ell m}^{obs}-a_{\ell m}\right\vert
^{2}+\lambda \sum_{\ell m}\left\vert a_{\ell m}\right\vert \\
&=\frac{1}{2}\sum_{\ell m}\left\{ [{\func{Re}}(a_{\ell m}^{obs})-{\func{Re}}%
(a_{\ell m})]^{2}+[{\func{Im}}(a_{\ell m}^{obs})-\func{Im}(a_{\ell
m})]^{2}\right\} +\lambda \sum_{\ell m}\left\vert a_{\ell m}\right\vert \\
&=\sum_{\ell m}v_{\ell m}\text{ ,}
\end{align*}%
where%
\begin{align*}
v_{\ell m}&=\frac{1}{2}(\rho _{\ell m}^{obs})^{2}\cos ^{2}\psi _{\ell
m}^{obs}+\frac{1}{2}\rho _{\ell m}^{2}\cos ^{2}\psi _{\ell m}-\rho _{\ell
m}^{obs}\rho _{\ell m}\cos \psi _{\ell m}^{obs}\cos \psi _{\ell m} \\
&+\frac{1}{2}(\rho _{\ell m}^{obs})^{2}\sin ^{2}\psi _{\ell m}^{obs}+\frac{1%
}{2}\rho _{\ell m}^{2}\sin ^{2}\psi _{\ell m}-\rho _{\ell m}^{obs}\rho
_{\ell m}\sin \psi _{\ell m}^{obs}\sin \psi _{\ell m}+\lambda \rho _{\ell m}
\\
&=\frac{1}{2}(\rho _{\ell m}^{obs})^{2}+\frac{1}{2}\rho _{\ell m}^{2}-\rho
_{\ell m}^{obs}\rho _{\ell m}\cos (\psi _{\ell m}^{obs}-\psi _{\ell
m})+\lambda \rho _{\ell m}\text{ .}
\end{align*}%
It is obvious that for any value of $\rho _{\ell m}^{obs},$ $v_{\ell m}$ is
minimized at $\psi _{\ell m}^{obs}=\psi _{\ell m};$ we are then led to the
following optimization problem:%
\begin{equation*}
\min_{{\{\rho _{\ell m}\}}}\sum_{\ell m}\phi (\rho _{\ell m}^{obs},\rho
_{\ell m};\lambda )\text{ ,}
\end{equation*}%
where%
\begin{equation*}
\phi (\rho _{\ell m}^{obs},\rho _{\ell m};\lambda )=\frac{1}{2}(\rho _{\ell
m}^{obs})^{2}+\frac{1}{2}\rho _{\ell m}^{2}-\rho _{\ell m}^{obs}\rho _{\ell
m}+\lambda \rho _{\ell m}\text{ .}
\end{equation*}%
Now it is standard calculus to show that, for $\lambda >\rho _{\ell m}^{obs}$%
\begin{equation*}
\frac{d\phi }{d\rho _{\ell m}}=\rho _{\ell m}+\lambda -\rho _{\ell m}^{obs}>0%
\text{ ,}
\end{equation*}%
while for $\lambda \leq \rho _{\ell m}^{obs}$%
\begin{equation*}
\frac{d\phi }{d\rho _{\ell m}}=\rho _{\ell m}+\lambda -\rho _{\ell
m}^{obs}=0\Longleftrightarrow \rho _{\ell m}=\rho _{\ell m}^{obs}-\lambda
\text{ .}
\end{equation*}%
The solution now follows immediately, given the global convexity of the
function $\phi (.).$
\end{proof}

\begin{remark}
The previous Lemma provides a simple generalization of the very well-known
fact that soft-thresholding provides the solution to (\ref{min1}) when the
dictionary is represented by an orthonormal basis of real valued functions.
In particular, for (\ref{minprob0_real}) the solution is immediately seen to
be given by
\begin{equation*}
{\ a_{\ell m}^{reg\ast }=sign(a_{\ell m}^{obs;\mathcal{R}})\left\vert
|a_{\ell m}^{obs;\mathcal{R}}|-\lambda \right\vert _{+}\text{ .} }
\end{equation*}%
It is important to note that the solution for the coefficient corresponding
to $m=0$ is exactly the same for both regularization schemes.
\end{remark}

The next result shows that, in the simplified circumstances we are
considering and for a {suitable} choice of the penalization parameter $%
\lambda $, the reconstruction error can be made arbitrarily small, with
probability arbitrarily close to one. For finite variance fields we have $%
E\{T^{2}\}=\sum_{\ell }\frac{2\ell +1}{4\pi }C_{\ell }<\infty $. To enforce
this condition and for notational convenience, in what follows we shall
assume that for all $\ell $, $0<C_{\ell }\leq K\ell ^{-\alpha }$,  for some $%
K>0$ and $\alpha >2$. This condition is minimal and fulfilled for instance
by all physically relevant models for CMB radiation.

\begin{proposition}
Under the above conditions, for all $\delta ,\varepsilon >0$ there exists {\
a positive} $\lambda =\lambda (\delta ,\varepsilon )$ such that
\begin{equation*}
T_{{\delta ,\varepsilon }}^{reg}(x)=\sum_{\ell m}a_{\ell m}^{reg}(\delta
,\varepsilon )Y_{\ell m}(x)
\end{equation*}%
\begin{equation*}
\left\{ a_{\ell m}^{reg}(\delta ,\varepsilon )\right\} =\arg \min_{\left\{
a_{\ell m}\right\} }\left\{ {\lambda (\delta ,\varepsilon )}\sum_{\ell
m}\left\vert a_{\ell m}\right\vert +\frac{1}{2}\sum_{\ell m}\left\vert
a_{\ell m}^{obs}-a_{\ell m}\right\vert ^{2}\right\} \text{ }
\end{equation*}%
and the solution satisfies%
\begin{equation*}
\Pr \left\{ \left\Vert T^{obs}-T_{{\delta ,\varepsilon }}^{reg}\right\Vert
_{L^{2}(S^{2})}<\varepsilon \right\} \geq 1-\delta \text{ .}
\end{equation*}%
The same result holds when the real-valued regularization scheme is adopted.
\end{proposition}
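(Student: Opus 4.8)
The plan is to use Parseval's identity to turn the $L^{2}$ reconstruction error into an $\ell^{2}$ sum over the harmonic coefficients, and then to control that sum by splitting it into a finite low-frequency block, which I can make \emph{deterministically} small by taking $\lambda$ small, and an infinite high-frequency tail, which I can make small \emph{in probability} because the coefficient energy is summable. Since the $\{Y_{\ell m}\}$ are orthonormal, the Plancherel relation already recorded in Section \ref{regularized} gives
\begin{equation*}
\left\Vert T^{obs}-T_{\delta,\varepsilon}^{reg}\right\Vert_{L^{2}(S^{2})}^{2}=\sum_{\ell m}\left\vert a_{\ell m}^{obs}-a_{\ell m}^{reg}\right\vert^{2}.
\end{equation*}
The first key step is to evaluate each summand using the explicit soft-thresholding solution of the previous Lemma. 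Because $a_{\ell m}^{reg}$ carries the same phase $\psi_{\ell m}^{obs}$ as $a_{\ell m}^{obs}$ and has modulus $\left\vert\rho_{\ell m}^{obs}-\lambda\right\vert_{+}$, writing both in polar form yields the clean identity
\begin{equation*}
\left\vert a_{\ell m}^{obs}-a_{\ell m}^{reg}\right\vert^{2}=\min\left\{(\rho_{\ell m}^{obs})^{2},\lambda^{2}\right\},
\end{equation*}
equal to $(\rho_{\ell m}^{obs})^{2}$ on the event $\{\rho_{\ell m}^{obs}\leq\lambda\}$ (the coefficient is killed) and to $\lambda^{2}$ otherwise (the coefficient is shrunk by exactly $\lambda$). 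This is the engine of the proof, since it bounds each term simultaneously by $\lambda^{2}$ and by $(\rho_{\ell m}^{obs})^{2}$.

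With this in hand I would argue as follows. Fix $\delta,\varepsilon>0$. Since $\sum_{\ell}(2\ell+1)C_{\ell}<\infty$ under the standing assumption $C_{\ell}\leq K\ell^{-\alpha}$, $\alpha>2$, I first choose an integer $L=L(\delta,\varepsilon)$ so large that $\sum_{\ell>L}(2\ell+1)C_{\ell}<\delta\varepsilon^{2}/2$. I then set the random tail energy $W:=\sum_{\ell>L}\sum_{m}(\rho_{\ell m}^{obs})^{2}$, which is nonnegative with $E[W]=\sum_{\ell>L}(2\ell+1)C_{\ell}<\delta\varepsilon^{2}/2$ because $E[(\rho_{\ell m}^{obs})^{2}]=C_{\ell}$ by the first Lemma; Markov's inequality then gives $\Pr\{W\geq\varepsilon^{2}/2\}\leq 2E[W]/\varepsilon^{2}<\delta$. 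Only now do I choose $\lambda=\lambda(\delta,\varepsilon)>0$ small enough that the deterministic low-frequency bound $\lambda^{2}\sum_{\ell=1}^{L}(2\ell+1)<\varepsilon^{2}/2$ holds, which is possible since the frequency count $\sum_{\ell=1}^{L}(2\ell+1)=L(L+2)$ is finite. Combining the per-coefficient identity with the bounds $\min\{(\rho_{\ell m}^{obs})^{2},\lambda^{2}\}\leq\lambda^{2}$ for $\ell\leq L$ and $\min\{(\rho_{\ell m}^{obs})^{2},\lambda^{2}\}\leq(\rho_{\ell m}^{obs})^{2}$ for $\ell>L$ gives, on the event $\{W<\varepsilon^{2}/2\}$,
\begin{equation*}
\left\Vert T^{obs}-T_{\delta,\varepsilon}^{reg}\right\Vert_{L^{2}(S^{2})}^{2}\leq\lambda^{2}\sum_{\ell=1}^{L}(2\ell+1)+W<\frac{\varepsilon^{2}}{2}+\frac{\varepsilon^{2}}{2}=\varepsilon^{2},
\end{equation*}
and this event has probability at least $1-\delta$, which is exactly the claim.

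The real-valued scheme is handled identically: by the Remark the solution is ordinary soft-thresholding $a_{\ell m}^{reg\ast}=\operatorname{sign}(a_{\ell m}^{obs;\mathcal{R}})\left\vert\,|a_{\ell m}^{obs;\mathcal{R}}|-\lambda\right\vert_{+}$, so $|a_{\ell m}^{obs;\mathcal{R}}-a_{\ell m}^{reg\ast}|^{2}=\min\{(a_{\ell m}^{obs;\mathcal{R}})^{2},\lambda^{2}\}$, and since the real coefficients are Gaussian with $E[(a_{\ell m}^{obs;\mathcal{R}})^{2}]=C_{\ell}$ the same split-and-Markov argument applies verbatim. I do not expect a serious obstacle here; the only point requiring genuine care is the \emph{order} of the choices, and the reason the argument closes is precisely the decoupling it permits: the tail bound $W$ does not depend on $\lambda$ (because $\min\{(\rho_{\ell m}^{obs})^{2},\lambda^{2}\}\leq(\rho_{\ell m}^{obs})^{2}$ regardless of $\lambda$), so one may fix $L$ from $\delta,\varepsilon$ alone, control the tail in probability once and for all, and only afterwards shrink $\lambda$ to make the finitely many low-frequency terms negligible. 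Keeping the inequalities strict in both halves suffices to deliver the strict bound $<\varepsilon$ in the statement.
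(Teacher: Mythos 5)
Your proposal is correct, and it takes a genuinely different route from the paper. Both proofs rest on the same engine, the soft-thresholding identity $\left\vert a_{\ell m}^{obs}-a_{\ell m}^{reg}\right\vert ^{2}=\min \{(\rho _{\ell m}^{obs})^{2},\lambda ^{2}\}$, but you deploy it pathwise while the paper deploys it in expectation. The paper bounds the full expected squared error $E\Vert T^{obs}-T^{reg}\Vert _{L^{2}(S^{2})}^{2}=\sum_{\ell m}E\{|a_{\ell m}^{obs}|^{2}\mathbb{I}(|a_{\ell m}^{obs}|\leq \lambda )\}+\lambda ^{2}\sum_{\ell m}\Pr \{|a_{\ell m}^{obs}|>\lambda \}$, which forces it into explicit Gaussian and Rayleigh tail computations (the $\mathrm{Erfc}$ bounds, the term $\lambda ^{2}\sum_{\ell }(2\ell +1)\exp \{-\lambda ^{2}/2C_{\ell }\}$ controlled by an integral test using $C_{\ell }\leq K\ell ^{-\alpha }$, $\alpha >2$), two separate frequency cutoffs $\ell ^{\ast }$ and $\ell ^{+}$, and a three-way constraint on $\lambda $; the conversion from the expectation bound to the probability statement (where $\delta $ enters) is then left implicit, via Markov. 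You instead apply Markov only once, to the $\lambda $-free tail energy $W=\sum_{\ell >L}\sum_{m}(\rho _{\ell m}^{obs})^{2}$, and dispatch the finitely many low frequencies deterministically by $\min \{\cdot ,\lambda ^{2}\}\leq \lambda ^{2}$. This buys two things: the argument is markedly more elementary, needing only $E[(\rho _{\ell m}^{obs})^{2}]=C_{\ell }$ and summability of $(2\ell +1)C_{\ell }$ — i.e., finite variance suffices, so the power-law hypothesis $\alpha >2$ is not actually used — and it produces the probability statement with $\delta $ directly and cleanly, with the order of choices ($L$ first, then $\lambda $) correctly decoupled exactly because the tail bound $W$ does not involve $\lambda $. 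What the paper's heavier computation buys in exchange is quantitative information: an explicit admissible range for $\lambda (\delta ,\varepsilon )$ in terms of $\varepsilon $ and the power spectrum (the $\lambda ^{3}$ and $\mathrm{Erfc}$ thresholds), whereas your $\lambda $ is only constrained through $L(L+2)$. Your handling of the real-valued scheme, via $|a_{\ell m}^{obs;\mathcal{R}}-a_{\ell m}^{reg\ast }|^{2}=\min \{(a_{\ell m}^{obs;\mathcal{R}})^{2},\lambda ^{2}\}$ and $E[(a_{\ell m}^{obs;\mathcal{R}})^{2}]=C_{\ell }$, is likewise correct and mirrors the paper's one-line remark that the real case is analogous.
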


\begin{proof}
Note that
\begin{align*}
E\left\Vert T^{obs}-T^{reg}\right\Vert _{L^{2}(S^{2})}^{2}&=\sum_{\ell
m}E|a_{\ell m}^{obs}-a_{\ell m}^{reg}(\lambda )|^{2} \\
&=\sum_{\ell m}E{\{|a_{\ell m}^{obs}|^{2}\mathbb{I(}\left\vert a_{\ell
m}^{obs}\right\vert \leq \lambda )\}}+\lambda ^{2}\sum_{\ell m}E\mathbb{I(}%
\left\vert a_{\ell m}^{obs}\right\vert >\lambda )\text{ .}
\end{align*}%
Now fix $\ell ^{\ast }$ such that%
\begin{equation*}
\sum_{\ell {>}\ell ^{\ast }}(2\ell +1)C_{\ell }\leq \frac{\varepsilon }{4}%
\text{ ,}
\end{equation*}%
and note that%
\begin{equation*}
\sum_{\ell =1}^{\infty }\sum_{m}E{\{|a_{\ell m}^{obs}|^{2}\mathbb{I(}%
\left\vert a_{\ell m}^{obs}\right\vert \leq \lambda )\}}\leq \sum_{\ell
=1}^{\ell ^{\ast }}\sum_{m}E{\{|a_{\ell m}^{obs}|^{2}\mathbb{I(}\left\vert
a_{\ell m}^{obs}\right\vert \leq \lambda )\}}+\frac{\varepsilon }{4}
\end{equation*}%
where
\begin{eqnarray*}
\sum_{\ell =1}^{\ell ^{\ast }}\sum_{m}E{\{|a_{\ell m}^{obs}|^{2}\mathbb{I(}%
\left\vert a_{\ell m}^{obs}\right\vert \leq \lambda )\}} &\leq &\lambda
^{2}\sum_{\ell =1}^{\ell ^{\ast }}\sum_{m}E\{\mathbb{I(}\left\vert a_{\ell
m}^{obs}\right\vert {\leq }\lambda )\} \\
&{=}&{\lambda ^{2}\sum_{\ell =1}^{\ell ^{\ast }}\big[\Pr \{|a_{\ell
0}^{obs}|\leq \lambda \}+\sum_{m\neq 0}\Pr \{|a_{lm}^{obs}|\leq \lambda \}%
\big]} \\
&{=}&{\lambda ^{2}\sum_{\ell =1}^{\ell ^{\ast }}\big[\int_{-\lambda
}^{\lambda }\frac{1}{\sqrt{2\pi C_{\ell }}}\exp \{-\frac{u^{2}}{2C_{\ell }}%
\}du+\sum_{m\neq 0}\int_{0}^{\lambda }\frac{2u}{C_{\ell }}\exp \{-\frac{u^{2}%
}{C_{\ell }}\}du\big]} \\
&{=}&{\lambda ^{2}\sum_{\ell =1}^{\ell ^{\ast }}\big[\int_{-\lambda
}^{\lambda }\frac{1}{\sqrt{2\pi C_{\ell }}}\exp \{-\frac{u^{2}}{2C_{\ell }}%
\}du+\sum_{m\neq 0}\big(1-\exp \{-\frac{\lambda ^{2}}{C_{\ell }}\}\big)\big]}
\\
&{\leq }&{\lambda ^{2}\sum_{\ell =1}^{\ell ^{\ast }}\big[\int_{-\lambda
}^{\lambda }\frac{1}{\sqrt{2\pi C_{\ell }}}du+\sum_{m\neq 0}\frac{\sqrt{2}%
\lambda }{\sqrt{\pi C_{\ell }}}\big]} \\
&\leq &\lambda ^{2}\sum_{\ell =1}^{\ell ^{\ast }}\frac{(2\ell +1)\sqrt{2}%
\lambda }{\sqrt{\pi C_{\ell }}}\leq {\frac{\lambda ^{3}\sqrt{2}}{\sqrt{\pi {%
C_{\ell }}^{\ast }}}(2\ell ^{\ast }+{\ell ^{\ast }}^{2})}\leq \frac{%
\varepsilon }{4}\text{ , }
\end{eqnarray*}%
provided that $C_{\ell }^{\ast }:=\min_{\ell =1,..,\ell ^{\ast }}C_{\ell }$
and
\begin{equation*}
{\lambda ^{3}\leq {\frac{\varepsilon \sqrt{\pi C_{\ell }^{\ast }}}{4\sqrt{2}%
(2{\ell ^{\ast }}+{\ell ^{\ast }}^{2})}}\text{ .}}
\end{equation*}%
Let $\text{Erfc}$ be the complementary error function defined by $\text{Erfc}%
(x)=\frac{2}{\sqrt{\pi }}\int_{x}^{\infty }\exp \{-x^{2}\}dx$. Since for $%
x>0 $, $\text{Erfc}(x)$ is bounded by $\text{Erfc}(x)\leq \frac{2}{\sqrt{\pi
}}\frac{1}{x+\sqrt{x^{2}+\frac{4}{\pi }}}\exp \{-x^{2}\}\leq \exp \{-x^{2}\}$%
, to bound the second term, we note that
\begin{eqnarray*}
\lambda ^{2}\sum_{\ell m}E\{\mathbb{I(}\left\vert a_{\ell
m}^{obs}\right\vert >\lambda )\} &=&{\lambda ^{2}\sum_{\ell }\big[\Pr
\{|a_{\ell ,0}^{obs}|>\lambda \}+\sum_{m\neq 0}\Pr \{|a_{\ell
m}^{obs}|>\lambda \}\big]} \\
&=&{\lambda ^{2}\sum_{\ell }\big[2\int_{\lambda }^{\infty }\frac{1}{\sqrt{%
2\pi C_{\ell }}}\exp \{-\frac{u^{2}}{2C_{\ell }}\}du+\sum_{m\neq
0}\int_{\lambda }^{\infty }\frac{2u}{C_{\ell }}\exp \{-\frac{u^{2}}{C_{\ell }%
}\}du\big]} \\
&=&\lambda ^{2}\sum_{\ell }\big[\text{Erfc}(\frac{\lambda }{\sqrt{2C_{\ell }}%
})+\sum_{m\neq 0}\exp \{-\frac{\lambda ^{2}}{C_{\ell }}\}\big] \\
&=&\lambda ^{2}\sum_{\ell }(2\ell +1)\exp \{-\frac{\lambda ^{2}}{2C_{\ell }}%
\},
\end{eqnarray*}%
now, for a fixed $\ell ^{+}>1$, we write
\begin{equation*}
\lambda ^{2}\sum_{\ell m}E\{\mathbb{I(}\left\vert a_{\ell
m}^{obs}\right\vert >\lambda )\}\leq \lambda ^{2}(2\ell ^{+}+{\ell ^{+}}%
^{2})+\lambda ^{2}\sum_{\ell >\ell ^{+}}(2\ell +1)\exp \{-\frac{\lambda ^{2}%
}{2C_{\ell }}\}.
\end{equation*}%
Here we apply the integral test to the remainder of the series; since $%
f(\ell )=(2\ell +1)\exp \{-\frac{\lambda ^{2}}{2C_{\ell }}\}$ for $C_{\ell
}\le K \ell ^{-\alpha }$, $\alpha >2$, is a positive and monotonically
decreasing function for all $\ell \geq 1$, we have
\begin{align*}
\sum_{\ell >\ell ^{+}}(2\ell +1)\exp \{-\frac{\lambda ^{2}\ell ^{\alpha }}{2}%
\}& \leq \int_{\ell ^{+}}^{\infty }(2x+1)\exp \{-\frac{\lambda ^{2}x^{\alpha
}}{2}\}dx \\
& \leq \int_{0}^{\infty }(2x+1)\exp \{-\frac{\lambda ^{2}x^{2}}{2}\}dx=\frac{%
4+\lambda \sqrt{2\pi }}{2\lambda ^{2}}\text{ for all }\lambda \geq 0\text{ ,
}\alpha >2\text{ .}
\end{align*}%
Therefore
\begin{equation*}
\lambda ^{2}\sum_{\ell m}E\{\mathbb{I(}\left\vert a_{\ell
m}^{obs}\right\vert >\lambda )\}\leq \lambda ^{2}(2\ell ^{+}+{\ell ^{+}}%
^{2})+\lambda ^{2}\frac{4+\lambda \sqrt{2\pi }}{2\lambda ^{2}}\leq \frac{%
\varepsilon }{2}\text{ ,}
\end{equation*}%
provided that we take $\lambda $ such that
\begin{equation*}
\lambda \leq \min \left\{ \sqrt{\frac{\varepsilon }{4(2\ell ^{+}+{\ell ^{+}}%
^{2})}},\left( \frac{\varepsilon }{2}-4\right) \frac{1}{\sqrt{2\pi }},\sqrt[3%
]{{\frac{\varepsilon \sqrt{\pi C_{\ell }^{\ast }}}{4\sqrt{2}(2{\ell ^{\ast }}%
+{\ell ^{\ast }}^{2})}}}\right\} .
\end{equation*}%
The proof for the real-valued regularization scheme is entirely analogous.
\end{proof}

The previous result is straightforward, but it has some important consequenc{%
e}s for the interpretation of the results to follow in the next Sections. In
particular, it entails that the presence of nonGaussianity and anisotropy
after convex regularization is not due to poor approximation properties of
the reconstructed maps. The regularized fields we shall deal with can indeed
be viewed as solutions to the optimization problem: for $\delta ,\varepsilon
>0,$%
\begin{equation*}
\{a_{\ell m}^{reg}(\delta ,\varepsilon )\}:=\arg \min_{\{a_{\ell
m}\}}\left\{ \lambda (\delta ,\varepsilon )\sum_{\ell m}\left\vert a_{\ell
m}\right\vert +\sum_{\ell m}\left\vert a_{\ell m}^{obs}-a_{\ell
m}\right\vert^{2} \right\}
\end{equation*}%
where $\lambda (\delta ,\varepsilon )$ is such that%
\begin{equation*}
\Pr \left\{ \left\Vert T^{obs}-T^{reg}_{ \delta, \varepsilon} \right\Vert
_{L^{2}(S^{2})}>\varepsilon \right\} \leq \delta \text{ .}
\end{equation*}%
We shall show that even for $T$ Gaussian and $\delta ,\varepsilon $
arbitrary small (but positive), $T^{reg}_{ \delta, \varepsilon}$ exhibits
nonGaussian statistics which diverge to infinity at the highest frequencies.

\section{Anisotropy and NonGaussianity \label{anisotropy}}

Let us write as before%
\begin{equation*}
T^{reg}=\sum_{\ell }T_{\ell }^{reg}=\sum_{\ell m}a_{\ell m}^{reg}Y_{\ell m}%
\text{ , }T^{reg\ast }=\sum_{\ell }T_{\ell }^{reg\ast }=\sum_{\ell m}a_{\ell
m}^{reg\ast }Y_{\ell m}^{\mathcal{R}}\text{ ,}
\end{equation*}%
e.g., $T^{reg},T^{reg\ast }$ represent, respectively the $\ell ^{1}-$%
regularized maps under the complex and real-valued optimization schemes. For
the discussion to follow, we need to recall briefly the following result:

\begin{theorem}
\label{BaldiM} (See Ref. \cite{BaMa}) Assume the spherical harmonic
coefficients $\left\{ a_{\ell m}\right\} $ of an isotropic random field are
independent for $\ell =1,2,...$ and $m=0,1,...,\ell .$ Then they are
necessarily Gaussian.
\end{theorem}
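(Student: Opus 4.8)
The plan is to exploit the way isotropy constrains the law of each fixed-frequency coefficient vector, and then to combine this with the independence hypothesis through a Herschel--Maxwell type characterization of the Gaussian law. I would first recall the transformation rule of the coefficients under a rotation $g\in SO(3)$: if $T^{g}(x)=T(g^{-1}x)$, then the frequency-$\ell$ block transforms by the Wigner matrices, $a_{\ell m}^{g}=\sum_{m^{\prime}}D_{mm^{\prime}}^{\ell}(g)\,a_{\ell m^{\prime}}$, with $D^{\ell}$ the irreducible representation of spin $\ell$. Isotropy means that for every $g$ the vector $(a_{\ell m}^{g})_{m}$ has the same law as $(a_{\ell m})_{m}$. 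Specializing first to rotations about the $z$-axis, which act diagonally by $a_{\ell m}\mapsto e^{-im\gamma}a_{\ell m}$, yields that each $a_{\ell m}$ with $m\neq 0$ is circularly symmetric (its phase is uniform and independent of its modulus) and that $a_{\ell 0}$ is real with a symmetric law.

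Next I would use a rotation by an angle $\beta$ about the $y$-axis, whose matrix elements are the real Wigner functions $d_{0m}^{\ell}(\beta)$, with $d_{00}^{\ell}(\beta)=P_{\ell}(\cos\beta)$. Using the reality relation $a_{\ell,-m}=(-1)^{m}\overline{a_{\ell m}}$ together with the symmetry $d_{0,-m}^{\ell}=(-1)^{m}d_{0m}^{\ell}$, the $\pm m$ terms combine and the rotated zero-mode collapses to a real linear form in independent variables,
\[
a_{\ell 0}\ \overset{d}{=}\ P_{\ell}(\cos\beta)\,a_{\ell 0}+\sum_{m=1}^{\ell}2\,d_{0m}^{\ell}(\beta)\,\mathrm{Re}(a_{\ell m})\text{ ,}
\]
valid for all $\beta$, the summands being mutually independent by hypothesis. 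Passing to characteristic functions turns this into the functional equation $\log\varphi_{0}(t)=\log\varphi_{0}(P_{\ell}(\cos\beta)t)+\sum_{m\geq 1}\log\psi_{m}(2d_{0m}^{\ell}(\beta)t)$, where $\varphi_{0},\psi_{m}$ denote the characteristic functions of $a_{\ell 0}$ and $\mathrm{Re}(a_{\ell m})$. In the baseline case $\ell=1$ this is exactly the Herschel--Maxwell equation $\varphi_{0}(t)=\varphi_{0}(t\cos\beta)\psi_{1}(t\sin\beta)$, whose only finite-variance solutions are the Gaussian ones $\varphi_{0}(t)=e^{-\sigma^{2}t^{2}/2}$; I would present this case in full first, since it already contains the whole mechanism.

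For general $\ell$ I would solve the functional equation by expanding both sides to second order and matching the higher-order structure: formally, for every $k\geq 3$ one is led to
\[
\kappa_{k}^{(0)}\bigl(1-P_{\ell}(\cos\beta)^{k}\bigr)=\sum_{m=1}^{\ell}\kappa_{k}^{(m)}\bigl(2d_{0m}^{\ell}(\beta)\bigr)^{k}\text{ , for all }\beta\text{ ,}
\]
together with the analogous identities obtained by rotating the other coefficients and by varying the rotation axis. The main obstacle is precisely here. Because $D^{\ell}(SO(3))$ is a proper, low-dimensional subgroup of the unitary group $U(2\ell+1)$, one cannot invoke the classical Maxwell theorem, which requires invariance under the full orthogonal group; instead one must show that the functions $\beta\mapsto P_{\ell}(\cos\beta)^{k}$ and $\beta\mapsto d_{0m}^{\ell}(\beta)^{k}$ are rich enough (linearly independent in the appropriate sense) to force every coefficient $\kappa_{k}^{(m)}$ with $k\geq 3$ to vanish. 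This special-function input replaces the soft symmetry argument available in the full-group case and is the technical heart of the proof. Granting it, each second characteristic is purely quadratic, so every coefficient is Gaussian --- real Gaussian for $m=0$ and, with the circular symmetry established in the first step, complex Gaussian for $m\neq 0$ --- whence the field $T$ is Gaussian.
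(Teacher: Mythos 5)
First, a point of context: the paper does not prove this theorem at all — it is imported from \cite{BaMa}, with a textbook proof in \cite{MarPecBook}, Theorem 6.12 — so your attempt can only be measured against that established argument. The known proof rests on a fact your proposal discards at the outset: isotropy is equality of the \emph{joint} law of the vector $(a_{\ell m})_{m}$ under every rotation, not just equality of the marginal law of $a_{\ell 0}$ along a one-parameter subgroup. Since $T^{g}\overset{d}{=}T$, the rotated coefficients $a_{\ell m}^{g}=\sum_{m^{\prime }}D_{mm^{\prime }}^{\ell }(g)\,a_{\ell m^{\prime }}$ inherit the independence hypothesis; hence for $m_{1}\neq m_{2}$ one has two \emph{independent} linear forms in the independent variables $(a_{\ell m^{\prime }})$, and the Skitovich--Darmois theorem forces every variable entering both forms with nonzero coefficient to be Gaussian. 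Choosing $g$ so that two rows of $D^{\ell }(g)$ have all entries nonzero (possible, since the $d_{mm^{\prime }}^{\ell }(\beta )$ are real-analytic and not identically zero) concludes, with some bookkeeping for the reality constraint $a_{\ell ,-m}=(-1)^{m}\overline{a_{\ell m}}$, and crucially with \emph{no} moment assumptions and no special-function analysis.

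Your route, by contrast, has a genuine gap exactly where you place the ``technical heart'': the claim that the identities $\kappa _{k}^{(0)}\bigl(1-P_{\ell }(\cos \beta )^{k}\bigr)=\sum_{m}\kappa _{k}^{(m)}\bigl(2d_{0m}^{\ell }(\beta )\bigr)^{k}$ force all cumulants of order $k\geq 3$ to vanish is granted, not proved, and it is not at all clear it can be proved as stated: the identity is automatic at $\beta =0$, cumulants of even order may take either sign (so positivity arguments fail), and a single one-parameter family of equations in $\ell +1$ unknown constants does not force triviality without a real linear-independence theorem for the functions $\{P_{\ell }^{k},(d_{0m}^{\ell })^{k}\}$ — which you do not supply, and which you yourself note would have to be supplemented by further identities from other axes and other coefficients. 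A second, independent defect is that the cumulant expansion presupposes finiteness of all moments of the $a_{\ell m}$, which is not among the hypotheses (isotropy in $L^{2}$ yields only second moments); the Skitovich--Darmois argument of \cite{BaMa} avoids this entirely by working at the level of laws. Your $\ell =1$ case is sound — the marginal functional equation $F(\sqrt{u^{2}+v^{2}})=F(u)+F(v)$ obtained by specializing $\beta =\pi /2$ does characterize the Gaussian, modulo the standard care with zeros of characteristic functions — but it does not extend to general $\ell $ by the formal cumulant matching you sketch, so as written the proposal proves the theorem only for $\ell =1$.
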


This result was established in \cite{BaMa}, see also \cite{BMV}, \cite{BT}
for extensions to homogeneous spaces of more general compact groups and \cite%
{MarPecBook}, Theorem 6.12 for a proof. An obvious consequence is that a
sequence of independent, but nonGaussian, random coefficients $\left\{
a_{\ell m}\right\} $ will necessarily lead to anisotropic random fields.

We are now in the position to state and prove the first result of this
paper; here and in what follows, we use $\Phi (x)=(2\pi
)^{-1/2}\int_{-\infty }^{x}\exp \{-\frac{1}{2}u^{2}\}du$ to denote the
cumulative distribution function of a standard Gaussian variable.

\begin{theorem}
Let $T^{obs}$ be a Gaussian and isotropic spherical random field. Then the
fields $T^{reg},T^{reg\ast }$ are necessarily nonGaussian and anisotropic.
In particular, in the complex-valued regularization scheme we have
\begin{align*}
E\left\{ a_{\ell 0}^{reg}(\lambda )^{2}\right\}&=\gamma _{0}(\frac{\lambda }{%
\sqrt{C_{\ell }}}) :=C_{\ell }\left\{ (1+\frac{\lambda ^{2}}{C_{\ell }}%
)2(1-\Phi (\frac{\lambda }{\sqrt{C_{\ell }}}))-\exp (-\frac{\lambda ^{2}}{%
2C_{\ell }})\frac{\lambda }{\sqrt{C_{\ell }}}\sqrt{\frac{2}{\pi }}\right\}
\text{ ,}  \label{gamma0}
\end{align*}%
while for $m\neq 0$%
\begin{equation}
E\{\left\vert a_{\ell m}^{reg}(\lambda )\right\vert ^{2}\}=\gamma _{1}(\frac{%
\lambda }{\sqrt{C_{\ell }}}) :=C_{\ell }\left\{ \exp (-\frac{\lambda ^{2}}{%
C_{\ell }})-\frac{\lambda }{\sqrt{C_{\ell }}}\sqrt{\pi }2(1-\Phi (\frac{%
\sqrt{2}\lambda }{\sqrt{C_{\ell }}}))\right\} \text{ .}  \label{gamma1}
\end{equation}%
Moreover, for all $m\neq 0,$ we have that%
\begin{equation}
-\lim_{\lambda /\sqrt{C_{\ell }}\rightarrow \infty }\frac{2C_{\ell }}{%
\lambda ^{2}}\log \frac{E\{\left\vert a_{\ell m}^{reg}\right\vert ^{2}\}}{%
E\{\left\vert a_{\ell 0}^{reg}\right\vert ^{2}\}}=1\text{ .}  \label{ghione3}
\end{equation}
and%
\begin{equation*}
\lim_{\lambda /\sqrt{C_{\ell }}\rightarrow \infty }\frac{E\left\{ T_{\ell
}^{reg}(\theta ,\phi )^{2}\right\} }{E\left\{ T_{\ell }^{{reg}%
}(0,0)^{2}\right\} P_{\ell }^{2}(\cos \theta )}=1\text{ .}
\end{equation*}%
Finally, in the real-valued regularization scheme%
\begin{equation*}
E\left\{ a_{\ell 0}^{reg\ast }(\lambda )^{2}\right\} =E\{\left\vert a_{\ell
m}^{reg\ast }(\lambda )\right\vert ^{2}\}=\gamma _{0}(\frac{\lambda }{\sqrt{%
C_{\ell }}})\text{ ,}
\end{equation*}%
for all $m=-\ell ,...,\ell .$
\end{theorem}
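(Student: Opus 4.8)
The plan is to push every moment in the statement through the explicit laws supplied by the two preceding lemmas, turning each into a one-dimensional integral. By the soft-thresholding lemma the regularized coefficients act coordinatewise: for $m=0$ one has $a_{\ell 0}^{reg}=\mathrm{sign}(a_{\ell 0}^{obs})\,(|a_{\ell 0}^{obs}|-\lambda)_+$ with $a_{\ell 0}^{obs}\sim N(0,C_\ell)$, while for $m\neq0$ one has $|a_{\ell m}^{reg}|=(\rho_{\ell m}^{obs}-\lambda)_+$ with $\rho_{\ell m}^{obs}$ of Rayleigh density $f_{\rho;\ell}$ and phase $\psi_{\ell m}^{obs}$ uniform and independent of the modulus. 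Before any computation I would dispose of the qualitative claims: each regularized coefficient satisfies $\Pr\{a_{\ell m}^{reg}=0\}=\Pr\{\rho_{\ell m}^{obs}\le\lambda\}>0$, hence has an atom at the origin and is not Gaussian; since a Gaussian field would make every coefficient $a_{\ell m}^{reg}=\int_{S^2}T^{reg}\overline{Y_{\ell m}}$ Gaussian, the field is nonGaussian. Because the thresholding is applied to each $a_{\ell m}^{obs}$ separately, the coefficients $\{a_{\ell m}^{reg}\}_{m=0,\dots,\ell}$ stay independent, so Theorem \ref{BaldiM} (independent plus nonGaussian forces anisotropy) yields anisotropy at once, for both schemes.

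Next I would establish the two exact variances. For $m=0$, $E\{(a_{\ell 0}^{reg})^2\}=E\{(|X|-\lambda)_+^2\}$ with $X\sim N(0,C_\ell)$; expanding $(x-\lambda)^2=x^2-2\lambda x+\lambda^2$ and integrating by parts the three truncated Gaussian moments gives $\gamma_0(\lambda/\sqrt{C_\ell})$. For $m\neq0$, $E\{|a_{\ell m}^{reg}|^2\}=\int_\lambda^\infty(r-\lambda)^2 f_{\rho;\ell}(r)\,dr$; the same expansion, together with $\int_\lambda^\infty f_{\rho;\ell}=e^{-\lambda^2/C_\ell}$ and $\int_\lambda^\infty e^{-r^2/C_\ell}dr=\sqrt{\pi C_\ell}\,(1-\Phi(\sqrt2\lambda/\sqrt{C_\ell}))$, delivers $\gamma_1(\lambda/\sqrt{C_\ell})$ as in \ref{gamma1}. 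For the real scheme every $a_{\ell m}^{obs;\mathcal R}$ is a real $N(0,C_\ell)$ variable thresholded exactly as in the $m=0$ complex case, so the identical integral returns $\gamma_0(\lambda/\sqrt{C_\ell})$ for all $m$, which is the final assertion.

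The delicate point is the exponential-rate limit \ref{ghione3}, and I expect the cancellations hidden in it to be the main obstacle. As $x:=\lambda/\sqrt{C_\ell}\to\infty$ both $\gamma_0$ and $\gamma_1$ are differences of two terms that agree at leading order, so a crude tail bound is useless and I would substitute the full expansion $1-\Phi(y)=\frac{1}{\sqrt{2\pi}\,y}e^{-y^2/2}\bigl(1-y^{-2}+3y^{-4}-\cdots\bigr)$. Taking $y=x$, the $x\,e^{-x^2/2}$ contributions in $\gamma_0$ cancel and leave $\gamma_0\sim C_\ell\,2\sqrt{2/\pi}\,x^{-3}e^{-x^2/2}$; taking $y=\sqrt2\,x$, the $e^{-x^2}$ contributions in $\gamma_1$ cancel and leave $\gamma_1\sim C_\ell\,(2x^2)^{-1}e^{-x^2}$. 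Hence $\log(\gamma_1/\gamma_0)=-x^2/2+O(\log x)$, and multiplying by $-2C_\ell/\lambda^2=-2/x^2$ produces the limit $1$.

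Finally, for the angular profile I would expand $E\{T_\ell^{reg}(\theta,\phi)^2\}=\sum_{m,m'}E\{a_{\ell m}^{reg}\overline{a_{\ell m'}^{reg}}\}\,Y_{\ell m}\overline{Y_{\ell m'}}$ and argue that every off-diagonal term vanishes: for distinct $m,m'$ with $mm'\neq0$ the independent uniform phases give $E\{e^{i(\psi_{\ell m}^{obs}-\psi_{\ell m'}^{obs})}\}=0$, for $m'=-m$ the reality relation $a_{\ell,-m}^{reg}=(-1)^m\overline{a_{\ell m}^{reg}}$ reduces the term to $E\{e^{2i\psi_{\ell m}^{obs}}\}=0$, and terms involving the real coefficient $a_{\ell0}^{reg}$ vanish in the same way. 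This leaves $E\{T_\ell^{reg}(\theta,\phi)^2\}=\gamma_0|Y_{\ell0}|^2+\gamma_1\sum_{m\neq0}|Y_{\ell m}|^2$; inserting $Y_{\ell0}=\sqrt{(2\ell+1)/4\pi}\,P_\ell(\cos\theta)$ and the addition theorem $\sum_m|Y_{\ell m}|^2=(2\ell+1)/4\pi$ gives $\frac{2\ell+1}{4\pi}\bigl[\gamma_0P_\ell^2(\cos\theta)+\gamma_1(1-P_\ell^2(\cos\theta))\bigr]$. Evaluating at the North Pole ($P_\ell(1)=1$) and forming the ratio leaves $1+(\gamma_1/\gamma_0)(1-P_\ell^2)/P_\ell^2$, which tends to $1$ because \ref{ghione3} forces $\gamma_1/\gamma_0\to0$; the same formula displays directly, whenever $\gamma_0\neq\gamma_1$, the $\theta$-dependence that is the anisotropy.
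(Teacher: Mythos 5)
Your proposal is correct, and it follows the same skeleton as the paper's proof (coordinatewise soft-thresholding from the preceding lemmas, explicit truncated-Gaussian and Rayleigh moment integrals for $\gamma_0$ and $\gamma_1$, and Theorem \ref{BaldiM} to convert independence plus nonGaussianity into anisotropy), but it executes the two delicate steps by a genuinely different and in fact sharper technique. For the rate limit (\ref{ghione3}) the paper manipulates the ratio of the two moment integrals directly, bounding $u\exp(-u^{2})\leq K_{\varepsilon}\exp\{-(1-\varepsilon)\lambda^{2}/(2C_{\ell})\}\exp(-u^{2}/2)$ on the integration range and letting $\varepsilon\to 0$; strictly speaking this one-sided bound only yields the lower bound for $-\frac{2C_{\ell}}{\lambda^{2}}\log(\gamma_{1}/\gamma_{0})$, and the matching direction is left implicit. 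Your substitution of the full Mills-ratio expansion $1-\Phi(y)\sim\frac{e^{-y^{2}/2}}{y\sqrt{2\pi}}(1-y^{-2}+3y^{-4}-\cdots)$ into the closed forms, exhibiting the leading-order cancellations $\gamma_{0}\sim 2\sqrt{2/\pi}\,C_{\ell}\,x^{-3}e^{-x^{2}/2}$ and $\gamma_{1}\sim C_{\ell}\,(2x^{2})^{-1}e^{-x^{2}}$ with $x=\lambda/\sqrt{C_{\ell}}$, gives exact two-sided asymptotics from which the limit follows cleanly, and is consistent with the Erfc expansions the paper itself deploys later for the trispectrum. Similarly, for the second-moment profile the paper bounds the off-diagonal contribution crudely by $2\ell\max_{m\neq 0}(\cdot)\to 0$, whereas you justify the vanishing of all cross terms (independence, uniform phases, and the conjugation relation for $m'=-m$, a point the paper passes over silently) and then invoke the addition theorem to get the exact identity $E\{T_{\ell}^{reg}(\theta,\phi)^{2}\}=\frac{2\ell+1}{4\pi}\bigl[\gamma_{0}P_{\ell}^{2}(\cos\theta)+\gamma_{1}\bigl(1-P_{\ell}^{2}(\cos\theta)\bigr)\bigr]$, from which the stated limit is immediate wherever $P_{\ell}(\cos\theta)\neq 0$ (a restriction shared, implicitly, by the paper). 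Your atom-at-zero argument for nonGaussianity is also the right repair of the paper's slightly loose "nonlinear hence nonGaussian" remark, which the paper only substantiates afterwards via the mixture representation of $a_{\ell 0}^{reg}$; the remaining parts (the real-valued scheme reducing to the $m=0$ computation with variance $C_{\ell}$) agree with the paper's "entirely analogous" treatment.
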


\begin{proof}
By assumption, the input coefficients $\left\{ a_{\ell m}\right\} ,$ are
Gaussian and independent. The inpainted coefficients can be written $a_{\ell
m}^{reg}=j(a_{\ell m};\lambda ),$ where the function $j(.;\lambda )$ is
nonlinear; it follows immediately that they are independent and nonGaussian.
Hence the fields $T^{reg},T^{reg\ast }$ are necessarily anisotropic, in view
of Theorem \ref{BaldiM}. Focussing on $m=0,$ we have in particular%
\begin{equation*}
\Pr \left\{ a_{\ell 0}^{reg}=0\right\} =p_{\ell }(\lambda ):=\int_{-\lambda
}^{\lambda }\frac{1}{\sqrt{2\pi C_{\ell }}}\exp (-\frac{x^{2}}{2C_{\ell }}%
)dx>0
\end{equation*}%
so that the distribution of $a_{\ell 0}^{reg}=a_{\ell 0}^{reg}(\lambda )$ is
given by the mixture%
\begin{equation*}
p_{\ell }(\lambda )\delta _{0}+(1-\frac{p_{\ell }(\lambda )}{2})\Phi
^{+}(.;\lambda ,C_{\ell })+(1-\frac{p_{\ell }(\lambda )}{2})\Phi
^{-}(.;\lambda ,C_{\ell })\text{ ,}
\end{equation*}%
where $\Phi ^{+}(.;\lambda ,C_{\ell })$ is the distribution {\ of a}
Gaussian random variable with mean $-\lambda $ and conditioned to be positive%
$,$ and likewise $\Phi ^{-}(.;\lambda ,C_{\ell })$ is the distribution {\ of
a} Gaussian random variable with mean $\lambda $ and conditioned to be
negative. It is simple to see that we have%
\begin{equation*}
E\{a_{\ell 0}^{reg}(\lambda )\}=0\text{ ,}
\end{equation*}%
and%
\begin{eqnarray*}
E\left\{ a_{\ell 0}^{reg}(\lambda )^{2} \right\} &=&\frac{2}{\sqrt{2\pi
C_{\ell }}}\int_{0}^{\infty }y^{2}\exp \left\{ -\frac{(y+\lambda )^{2}}{%
2C_{\ell }}\right\} dy \\
&=&\frac{2}{\sqrt{2\pi C_{\ell }}}\int_{\lambda }^{\infty }(x-\lambda
)^{2}\exp \left\{ -\frac{x^{2}}{2C_{\ell }}\right\} dx \\
&=&\frac{2}{\sqrt{2\pi C_{\ell }}}\int_{\lambda }^{\infty }(x^{2}-2\lambda
x+\lambda ^{2})\exp \left\{ -\frac{x^{2}}{2C_{\ell }}\right\} dx \\
&=&\frac{2}{\sqrt{\pi }}C_{\ell }\int_{\lambda }^{\infty }\frac{x}{\sqrt{%
2C_{\ell }}}\exp \left\{ -\frac{x^{2}}{2C_{\ell }}\right\} d\frac{x^{2}}{%
2C_{\ell }}-\frac{4}{\sqrt{2\pi }}\sqrt{C_{\ell }}\int_{\lambda }^{\infty
}\lambda \exp \left\{ -\frac{x^{2}}{2C_{\ell }}\right\} d\frac{x^{2}}{%
2C_{\ell }} \\
&&+\frac{2}{\sqrt{\pi }}\lambda ^{2}\int_{\lambda }^{\infty }\exp \left\{ -%
\frac{x^{2}}{2C_{\ell }}\right\} d\frac{x}{\sqrt{{\ 2}C_{\ell }}} \\
&=&\frac{2C_{\ell }}{\sqrt{\pi }}\Gamma (\frac{3}{2};\frac{\lambda ^{2}}{%
2C_{\ell }})-\frac{4\sqrt{C_{\ell }}}{\sqrt{2\pi }}\lambda \exp \left\{ -%
\frac{\lambda ^{2}}{2C_{\ell }}\right\} +2\lambda ^{2}\left\{ 1-\Phi (\frac{%
\lambda }{\sqrt{C_{\ell }}})\right\} \\
&=&2C_{\ell }\left\{ \frac{1}{\sqrt{\pi }}\Gamma (\frac{3}{2};\frac{\lambda
^{2}}{2C_{\ell }})-\frac{2}{\sqrt{2\pi }}\frac{\lambda }{\sqrt{C_{\ell }}}%
\exp \left\{ -\frac{\lambda ^{2}}{2C_{\ell }}\right\} +\frac{\lambda ^{2}}{%
C_{\ell }}\left\{ 1-\Phi (\frac{\lambda }{\sqrt{C_{\ell }}})\right\}
\right\} \text{ ,}
\end{eqnarray*}%
where%
\begin{equation*}
\Gamma (p;c)=\int_{c}^{\infty }x^{p-1}\exp (-x)dx
\end{equation*}%
denotes the incomplete Gamma function. Now using
\begin{equation}
\Gamma (\frac{3}{2};c)=\sqrt{c}e^{-c}+\frac{1}{2}\sqrt{\pi }\; \text{Erfc}
\left( \sqrt{c}\right) \text{ , }\text{Erfc} (u):=2(1-\Phi (\sqrt{2}u))\text{
, }  \label{iniez1}
\end{equation}%
the previous expression can be further developed to obtain
\begin{align*}
E\left\{ a_{\ell 0}^{reg}(\lambda )^{2} \right\} &=2C_{\ell }\left\{ \frac{1%
}{\sqrt{\pi }} {\ \frac{\lambda}{\sqrt{2 C_\ell}}} \exp\{-\frac{\lambda ^{2}%
}{2C_{\ell }}\}+\frac{1}{2}\text{Erfc} \left( {\ \frac{\lambda}{\sqrt{2
C_\ell}}} \right) -\frac{2}{\sqrt{2\pi }}\frac{\lambda }{\sqrt{C_{\ell }}}%
\exp \left\{ -\frac{\lambda ^{2}}{2C_{\ell }}\right\} +\frac{\lambda ^{2}}{%
C_{\ell }}\left\{ 1-\Phi (\frac{\lambda }{\sqrt{C_{\ell }}})\right\} \right\}
\\
&=2C_{\ell }\left\{ 1-\Phi (\frac{\lambda }{\sqrt{C_{\ell }}})-\sqrt{\frac{1%
}{2\pi }}\frac{\lambda }{\sqrt{C_{\ell }}}\exp \left\{ -\frac{\lambda ^{2}}{%
2C_{\ell }}\right\} +\frac{{\lambda ^{2}}}{C_{\ell }}\left\{ 1-\Phi (\frac{%
\lambda }{\sqrt{C_{\ell }}})\right\} \right\} \\
&=C_{\ell }\left\{ (1+\frac{\lambda ^{2}}{C_{\ell }})2(1-\Phi (\frac{\lambda
}{\sqrt{C_{\ell }}}))-\exp (-\frac{\lambda ^{2}}{2C_{\ell }})\frac{\lambda }{%
\sqrt{C_{\ell }}}\sqrt{\frac{2}{\pi }}\right\} \text{ .}
\end{align*}
It can be checked easily that $\lim_{\lambda \rightarrow 0}E\left\{ a_{\ell
0}^{reg}(\lambda )^{2} \right\}=C_{\ell }$ , as expected. Similarly, by
moving to polar coordinates it is easy to see that we have%
\begin{eqnarray}
E\{\left\vert a_{\ell m}^{reg}(\lambda )\right\vert ^{2} \}&=&\frac{1}{2\pi }%
\int_{0}^{2\pi }\int_{\lambda }^{\infty }(r-\lambda )^{2}\frac{r}{C_{\ell }/2%
}\exp (-\frac{r^{2}}{C_{\ell }})drd\varphi  \notag \\
&=&C_{\ell }\int_{\lambda }^{\infty }(\frac{r-\lambda }{\sqrt{C_{\ell }}}%
)^{2}\frac{2r}{C_{\ell }}\exp (-\frac{r^{2}}{C_{\ell }})dr  \notag \\
&=&C_{\ell }\int_{\lambda }^{\infty }(\frac{r-\lambda }{\sqrt{C_{\ell }}}%
)^{2}\exp (-\frac{r^{2}}{C_{\ell }})d\frac{r^{2}}{C_{\ell }}  \notag \\
&=&C_{\ell }\left\{ \int_{\lambda ^{2}/C_{\ell }}^{\infty }u\exp (-u)du+%
\frac{\lambda ^{2}}{C_{\ell }}\int_{\lambda ^{2}/C_{\ell }}^{\infty }\exp
(-u)du\right\}  \notag \\
&&+C_{\ell }\left\{ -2\frac{\lambda }{\sqrt{C_{\ell }}}\int_{\lambda
^{2}/C_{\ell }}^{\infty }\sqrt{u}\exp (-u)du\right\} \text{ .}
\label{iniez3}
\end{eqnarray}%
Now using (\ref{iniez1}) and%
\begin{equation}
\Gamma (2;c)=\int_{c}^{\infty }u\exp (-u)du=e^{-c}+ce^{-c},  \label{iniez2}
\end{equation}%
we have%
\begin{eqnarray*}
E\{\left\vert a_{\ell m}^{reg}(\lambda )\right\vert ^{2} \}&=&C_{\ell
}\left\{ \exp (-\frac{\lambda ^{2}}{C_{\ell }})+\frac{\lambda ^{2}}{C_{\ell }%
}\exp (-\frac{\lambda ^{2}}{C_{\ell }}) +\frac{\lambda ^{2}}{C_{\ell }}\exp
(-\frac{\lambda ^{2}}{C_{\ell }}) \right\} \\
&&-2C_{\ell }\left\{ \frac{\lambda ^{2}}{C_{\ell }} \exp (-\frac{\lambda ^{2}%
}{C_{\ell }}) +\frac{1}{2}\sqrt{\pi }\text{Erfc} \left(\frac{\lambda}{\sqrt{%
C_\ell}} \right) \frac{\lambda }{\sqrt{C_{\ell }}}\right\} \\
&=&C_{\ell }\left\{ \exp (-\frac{\lambda ^{2}}{C_{\ell }})-\sqrt{\pi }\text{%
Erfc} \left( \frac{\lambda}{\sqrt{C_\ell}} \right) \frac{\lambda }{\sqrt{%
C_{\ell }}}\right\} \\
&=&C_{\ell }\left\{ \exp (-\frac{\lambda ^{2}}{C_{\ell }})-\frac{\lambda }{%
\sqrt{C_{\ell }}}\sqrt{\pi }2(1-\Phi (\frac{\sqrt{2}\lambda }{\sqrt{C_{\ell }%
}}))\right\} \text{ .}
\end{eqnarray*}%
Hence we have%
\begin{eqnarray*}
\frac{E\{\left\vert a_{\ell m}^{reg}\right\vert ^{2}\}}{E\{\left\vert
a_{\ell 0}^{reg}\right\vert ^{2}\}}&=&\frac{\int_{\lambda }^{\infty
}(r-\lambda )^{2}2\frac{r}{C_{\ell }}\exp (-\frac{r^{2}}{C_{\ell }})dr}{%
\frac{2}{\sqrt{2\pi C_{\ell }}}\int_{\lambda }^{\infty }(x-\lambda )^{2}\exp
\left\{ -\frac{x^{2}}{2C_{\ell }}\right\} dx} \\
&=&\frac{C_{\ell }\int_{\lambda }^{\infty }(\frac{r}{\sqrt{C_{\ell }}}-\frac{%
\lambda }{\sqrt{C_{\ell }}})^{2}2\frac{r}{\sqrt{C_{\ell }}}\exp (-\frac{r^{2}%
}{C_{\ell }})d\frac{r}{\sqrt{C_{\ell }}}}{\frac{2}{\sqrt{2\pi }}C_{\ell
}\int_{\lambda }^{\infty }(\frac{x}{\sqrt{C_{\ell }}}-\frac{\lambda }{\sqrt{%
C_{\ell }}})^{2}\exp \left\{ -\frac{x^{2}}{2C_{\ell }}\right\} d\frac{x}{%
\sqrt{C_{\ell }}}} \\
&=&\frac{\int_{\lambda /\sqrt{C_{\ell }}}^{\infty }(u-\frac{\lambda }{\sqrt{%
C_{\ell }}})^{2}u\exp (-u^{2})du}{\frac{1}{\sqrt{2\pi }}\int_{\lambda /\sqrt{%
C_{\ell }}}^{\infty }(u-\frac{\lambda }{\sqrt{C_{\ell }}})^{2}\exp \left\{ -%
\frac{u^{2}}{2}\right\} du} \\
&\leq &K_\varepsilon \exp (-\frac{\lambda ^{2}}{2C_{\ell }}(1-\varepsilon ))%
\frac{\int_{\lambda /\sqrt{C_{\ell }}}^{\infty }(u-\frac{\lambda }{\sqrt{%
C_{\ell }}})^{2}\exp (-\frac{u^{2}}{2})du}{\int_{\lambda /\sqrt{C_{\ell }}%
}^{\infty }(u-\frac{\lambda }{\sqrt{C_{\ell }}})^{2}\exp \left\{ -\frac{u^{2}%
}{2}\right\} du} \\
&=&K_{\varepsilon }\exp (-\frac{\lambda ^{2}}{2C_{\ell }}(1-\varepsilon ))%
\text{ },\text{ }
\end{eqnarray*}%
for some constant $K_{\varepsilon }>0,$ any $\varepsilon >0,$ because $u\exp
(-\frac{\varepsilon u^{2}}{2})\leq K_{\varepsilon }$ for all $u\geq \frac{%
\lambda }{\sqrt{C_{\ell }}}.$ Now%
\begin{align*}
&-\lim_{\lambda /\sqrt{C_{\ell }}\rightarrow \infty }\frac{2C_{\ell }}{%
\lambda ^{2}}\log \frac{E\{\left\vert a_{\ell m}^{reg}\right\vert ^{2}\}}{%
E\{\left\vert a_{\ell 0}^{reg}\right\vert ^{2}\}} \\
&=-\lim_{\lambda /\sqrt{C_{\ell }}\rightarrow \infty }\frac{2C_{\ell }}{%
\lambda ^{2}}\log K_{\varepsilon }-\lim_{\lambda /\sqrt{C_{\ell }}%
\rightarrow \infty }\frac{2C_{\ell }}{\lambda ^{2}}\log \exp (-\frac{\lambda
^{2}}{2C_{\ell }}(1-\varepsilon )) \\
&=(1-\varepsilon )\text{ ,}
\end{align*}%
and because $\varepsilon $ is arbitrary, the first result follows. To
conclude, it is then sufficient to note that%
\begin{equation*}
\frac{E\{T_{\ell }^{{reg}}(\theta ,\phi )^2 \}}{E\{T_{\ell }^{{reg}}(0,0)^2
\}P_{\ell }^{2}(\cos \theta )}=\frac{\sum_{m}E\{|a_{\ell
m}^{reg}|^{2}\}\left\vert Y_{\ell m}(\theta ,\phi )\right\vert ^{2}}{\frac{%
2\ell +1}{4\pi }E\{(a_{\ell 0}^{{reg}})^{2}\}P_{\ell }^{2}(\cos \theta )}
\end{equation*}%
\begin{equation*}
=1+\frac{\sum_{m\neq 0}E\{|a_{\ell m}^{reg}|^{2}\}\left\vert Y_{\ell
m}(\theta ,\phi )\right\vert ^{2}}{\frac{2\ell +1}{4\pi }E\{(a_{\ell 0}^{{reg%
}})^{2}\}P_{\ell }^{2}(\cos \theta )}\rightarrow 1\text{ ,}
\end{equation*}%
because%
\begin{equation*}
\frac{\sum_{m\neq 0}E\{|a_{\ell m}^{reg}|^{2}\} \left\vert Y_{\ell m}(\theta
,\phi )\right\vert ^{2}}{\frac{2\ell +1}{4\pi }E\{(a_{\ell 0}^{{reg}})^2
\}P_{\ell }^{2}(\cos \theta )}\leq {2\ell} \max_{m\neq 0}\frac{E\{|a_{\ell
m}^{reg}|^{2}\} \left\vert Y_{\ell m}(\theta ,\phi )\right\vert ^{2}}{\frac{%
2\ell +1}{4\pi } E\{(a_{\ell 0}^{{reg}})^{2}\} {P_{\ell}^2(\cos \theta)} }%
\rightarrow 0\text{ ,}
\end{equation*}%
as $\lambda /\sqrt{C_{\ell }}\rightarrow \infty$. The proof in the
real-valued scheme is analogous.
\end{proof}

\begin{remark}
As a consequence of the previous Theorem, in the complex-valued
regularization scheme the ratio $E\left\vert a_{\ell 0}^{reg}\right\vert
^{2}/E\left\vert a_{\ell m}^{reg}\right\vert ^{2}$ diverges to infinity
super-exponentially as $C_{\ell }\rightarrow 0,$ and the covariance function
is dominated by a single random coefficient, thus oscillating over the
sphere as the square of a Legendre polynomial. For the real-valued
algorithm, this is not the case, and the variance is constant; nevertheless,
this field is still anisotropic, as confirmed by the analysis of higher
order power spectra which we entertain in the next Sections.
\end{remark}

\section{High-Frequency Behavior of Trispectra \label{trispectra}}

\subsection{\protect\bigskip The Trispectrum at the North Pole}

A natural tool to explore non-Gaussian/anisotropic features of a spherical
random fields is provided by the expected values of higher-order moments of
its multipole components. For instance, fourth order moments lead to
so-called trispectra, see \cite{hu,MarPecBook,planckNG} for properties and
applications; for our aims, it suffices to recall that, under Gaussianity
and isotropy, we should have%
\begin{equation*}
\frac{ET_{\ell }^{4}(x)}{(ET_{\ell }^{2}(x))^{2}}\equiv 3\text{ , for all }%
x\in S^{2},\text{ }\ell =0,1,2,...
\end{equation*}%
In the following result we show instead that the normalized trispectrum of
convexly regularized fields diverges to infinity at the North Pole.

\begin{theorem}
The normalized trispectrum of a convexly regularized random field at the
North Pole is given by%
\begin{equation*}
\frac{E\{T_{\ell }^{reg} {(0,0)}^{4}\}}{[E\{T_{\ell }^{reg} {(0,0)}%
^{2}\}]^{2}}=\frac{E\left\{ a_{\ell 0}^{reg}(\lambda )^{4} \right\} }{[E\{
a_{\ell 0}^{reg}(\lambda )^{2} \} ]^{2}}={\ \sqrt{\frac{\pi}{2}}} \psi (%
\frac{\lambda }{\sqrt{C_{\ell }}})\text{ ,}
\end{equation*}%
where%
\begin{equation*}
\psi (\frac{\lambda }{\sqrt{C_{\ell }}}):=\frac{\int_{0}^{\infty }v^{4}\exp
\left\{ -\frac{1}{2}\left[ v+\frac{\lambda }{\sqrt{C_{\ell }}}\right]
^{2}\right\} dv}{\left[ \int_{0}^{\infty }v^{2}\exp \left\{ -\frac{1}{2}%
\left[ v+\frac{\lambda }{\sqrt{C_{\ell }}}\right] ^{2}\right\} dv\right] ^{2}%
}\text{ .}
\end{equation*}%
The function $\psi (.)$ is strictly positive and increasing, with%
\begin{equation*}
\lim_{x\rightarrow 0}\psi (x)=3\text{ , }\lim_{x\rightarrow \infty }\left[
\frac{15}{4} x^3 \exp\left\{\frac{x^2}{2}\right\} \right]^{-1}\psi(x)=1
\text{ .}
\end{equation*}
\end{theorem}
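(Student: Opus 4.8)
The plan is to reduce the trispectrum at the North Pole to a one-dimensional moment ratio, verify the stated identity by a single rescaling, and then analyze the function $\psi$ by elementary calculus together with a Laplace-type expansion. Throughout I write $I_k(x):=\int_0^\infty v^k\exp\{-\tfrac12(v+x)^2\}\,dv$, so that $\psi(x)=I_4(x)/I_2(x)^2$.

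First I would use that the associated Legendre functions vanish at the pole, so $Y_{\ell m}(0,0)=0$ for $m\neq0$ and only the zonal mode survives: $T_\ell^{reg}(0,0)=a_{\ell0}^{reg}\,Y_{\ell0}(0,0)$. The common factor $Y_{\ell0}(0,0)$ then cancels in the normalized trispectrum, which therefore equals $E\{(a_{\ell0}^{reg})^4\}/[E\{(a_{\ell0}^{reg})^2\}]^2$. Since $a_{\ell0}^{reg}=\mathrm{sign}(a_{\ell0}^{obs})(|a_{\ell0}^{obs}|-\lambda)_+$ is a soft-thresholded $N(0,C_\ell)$ variable, with an atom at the origin that contributes nothing to the even moments, symmetry gives $E\{(a_{\ell0}^{reg})^{2k}\}=\tfrac{2}{\sqrt{2\pi C_\ell}}\int_0^\infty y^{2k}\exp\{-(y+\lambda)^2/(2C_\ell)\}\,dy$. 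The substitution $v=y/\sqrt{C_\ell}$ extracts a factor $C_\ell^{k}$ and turns the exponent into $-\tfrac12(v+\lambda/\sqrt{C_\ell})^2$, so that $E\{(a_{\ell0}^{reg})^{2k}\}=\tfrac{2C_\ell^{k}}{\sqrt{2\pi}}\,I_{2k}(\lambda/\sqrt{C_\ell})$. Forming the ratio, every power of $C_\ell$ cancels and the leftover numerical constants collapse to $\sqrt{\pi/2}$, which is exactly the claimed identity.

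Positivity of $\psi$ is immediate since both integrals are strictly positive. For monotonicity I would differentiate under the integral sign. Two relations carry the argument: direct differentiation gives $I_k'(x)=-(I_{k+1}(x)+x\,I_k(x))$, while integrating $\tfrac{d}{dv}\exp\{-\tfrac12(v+x)^2\}$ by parts yields the three-term recursion $I_{k+1}(x)=k\,I_{k-1}(x)-x\,I_k(x)$. Combining them, $\tfrac{d}{dx}\log\psi=2\,I_3/I_2-I_5/I_4+x$; eliminating $I_5$ and $I_4$ through the recursion reduces the sign of this derivative to that of $\mu_2\mu_3+x(\mu_2\mu_4-\mu_3^2)$, where $\mu_k:=I_k/I_0$ are the moments of a positive random variable $Z$ with density proportional to $\exp\{-\tfrac12(v+x)^2\}$ on $[0,\infty)$. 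The first term is strictly positive and the second is nonnegative by Cauchy--Schwarz, since $\mu_3^2=(E[Z\cdot Z^2])^2\le E[Z^2]\,E[Z^4]=\mu_2\mu_4$. Hence $\tfrac{d}{dx}\log\psi>0$ and $\psi$ is strictly increasing for $x>0$. This reduction to a single Cauchy--Schwarz inequality is the step I expect to require the most thought: massaging an apparently messy logarithmic derivative into one quantity of manifest sign is what makes the monotonicity tractable.

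For the boundary behavior, the limit $x\to0$ follows by dominated convergence together with the half-Gaussian moments $\int_0^\infty v^{2k}e^{-v^2/2}\,dv=2^{k-1/2}\Gamma(k+\tfrac12)$, and a short check confirms that the full normalized trispectrum $\sqrt{\pi/2}\,\psi(x)$ tends to the Gaussian value $3$ at $\lambda=0$, the required consistency check. For $x\to\infty$ I would apply Watson's lemma to $I_k(x)=e^{-x^2/2}\int_0^\infty v^k e^{-xv-v^2/2}\,dv$: the factor $e^{-xv}$ localizes the mass near $v=0$, where $e^{-v^2/2}=1+O(v^2)$, so that $I_k(x)\sim k!\,x^{-(k+1)}e^{-x^2/2}$. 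Substituting into $\psi=I_4/I_2^2$, the Gaussian factors combine into the growth rate $e^{x^2/2}$ while the leading powers of $x$ from numerator and denominator combine into the polynomial prefactor. The remaining work here is computational bookkeeping: because the dominant $e^{-x^2/2}$ factors and the leading powers of $x$ largely cancel in the ratio, the correct prefactor must be read off by carrying the Watson expansion to the first nonvanishing order and simplifying the surviving constant.
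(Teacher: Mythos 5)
Your derivation of the identity is correct and more economical than the paper's: the paper expands $(x-\lambda)^4$ and grinds through incomplete Gamma functions, whereas your single rescaling $E\{(a_{\ell 0}^{reg})^{2k}\}=\tfrac{2C_\ell^{k}}{\sqrt{2\pi}}I_{2k}(\lambda/\sqrt{C_\ell})$ gives the constant $\sqrt{\pi/2}$ at once. Your monotonicity argument is a genuine addition: the paper asserts that $\psi$ is strictly increasing but never proves it, while your combination of $I_k'=-(I_{k+1}+xI_k)$, the recursion $I_{k+1}=kI_{k-1}-xI_k$, and Cauchy--Schwarz checks out --- the sign of $(\log\psi)'$ does reduce to that of $I_2I_3+x\left(I_2I_4-I_3^2\right)>0$. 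On the $x\to0$ limit you are also right to verify the statement for $\sqrt{\pi/2}\,\psi$ rather than $\psi$: literally $\psi(0)=6/\sqrt{2\pi}=3\sqrt{2/\pi}$, so the theorem's display ``$\lim_{x\to0}\psi(x)=3$'' holds only after multiplication by $\sqrt{\pi/2}$; the paper's own proof, like yours, in fact checks the kurtosis limit $3$.

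The genuine problem is your final step, and it is not a matter of ``computational bookkeeping.'' Watson's lemma, which you invoke correctly, gives $I_k(x)\sim k!\,x^{-(k+1)}e^{-x^2/2}$, hence directly $\psi=I_4/I_2^2\sim\frac{24x^{-5}}{4x^{-6}}\,e^{x^2/2}=6x\,e^{x^2/2}$: there is no leading-order cancellation between numerator and denominator, so no higher-order terms can turn the prefactor $6x$ into the stated $\tfrac{15}{4}x^3$. Your closing sentence, which defers to the theorem's constant, therefore asserts something your own method disproves. The discrepancy lies in the paper: its proof substitutes the Erfc expansion with a wrong coefficient ($\tfrac{3}{2}x^{-4}$ in place of $3x^{-4}$) and truncates too early --- in the bracket $-5x-x^3+e^{x^2/2}\sqrt{\pi/2}\,(3+6x^2+x^4)\,\text{Erfc}(x/\sqrt{2})$ the terms of order $x$, $x^{-1}$, $x^{-3}$ all cancel, and one must carry the expansion through the $105x^{-8}$ term to see that this bracket equals $24x^{-5}+O(x^{-7})$, while the squared bracket in the denominator is $4x^{-6}+O(x^{-8})$. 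Numerics confirm this: at $x=5$ one finds $\psi(x)/(6xe^{x^2/2})\approx0.93$, while $\psi(x)/(\tfrac{15}{4}x^3e^{x^2/2})\approx0.06$ and decays like $1.6x^{-2}$. So the correct limit is $\lim_{x\to\infty}[6x\,e^{x^2/2}]^{-1}\psi(x)=1$; your approach proves exactly this, but you must finish the computation and flag the conflict with the printed statement rather than assume its prefactor. (Note the growth rate $e^{x^2/2}$ also means $\log\kappa_\ell(0,0)\sim\lambda^2/(2C_\ell)$, so the paper's companion claim that $\log\kappa_\ell(0,0)\cdot C_\ell/\lambda^2\to1$ should likewise read $1/2$.)
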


\begin{proof}
Similarly to the proof of the previous Theorem and using the same notation,
we have%
\begin{align*}
E\left\{ a_{\ell 0}^{reg}(\lambda )^{4} \right\} &=\frac{2}{\sqrt{2\pi
C_{\ell }}}\int_{\lambda }^{\infty }(x-\lambda )^{4}\exp \left\{ -\frac{x^{2}%
}{2C_{\ell }}\right\} dx \\
&=\frac{2}{\sqrt{2\pi C_{\ell }}}\int_{\lambda }^{\infty }x^{4}\exp \left\{ -%
\frac{x^{2}}{2C_{\ell }}\right\} dx-8\frac{1}{\sqrt{2\pi C_{\ell }}}%
\int_{\lambda }^{\infty }x^{3}\lambda \exp \left\{ -\frac{x^{2}}{2C_{\ell }}%
\right\} dx \\
&\;\;+12\frac{1}{\sqrt{2\pi C_{\ell }}}\int_{\lambda }^{\infty }x^{2}\lambda
^{2}\exp \left\{ -\frac{x^{2}}{2C_{\ell }}\right\} dx-8\frac{1}{\sqrt{2\pi
C_{\ell }}}\int_{\lambda }^{\infty }x\lambda ^{3}\exp \left\{ -\frac{x^{2}}{%
2C_{\ell }}\right\} dx \\
&\;\;+\frac{2}{\sqrt{2\pi C_{\ell }}}\int_{\lambda }^{\infty }\lambda
^{4}\exp \left\{ -\frac{x^{2}}{2C_{\ell }}\right\} dx \\
&=\frac{2}{\sqrt{2\pi }}\sqrt{8}C_{\ell }^{2}\int_{\lambda }^{\infty }\frac{%
x^{3}}{\sqrt{8C_{\ell }^{3}}}\exp \left\{ -\frac{x^{2}}{2C_{\ell }}\right\} d%
\frac{x^{2}}{2C_{\ell }} \\
&\;\;-8\frac{1}{\sqrt{2\pi }}2\sqrt{C_{\ell }^{3}}\lambda \int_{\lambda
}^{\infty }(\frac{x}{\sqrt{2C_{\ell }}})^{2}\exp \left\{ -\frac{x^{2}}{%
2C_{\ell }}\right\} d\frac{x^{2}}{2C_{\ell }} \\
&\;\;+12\frac{1}{\sqrt{2\pi }}\sqrt{2}C_{\ell }\lambda ^{2}\int_{\lambda
}^{\infty }\frac{x}{\sqrt{2C_{\ell }}}\exp \left\{ -\frac{x^{2}}{2C_{\ell }}%
\right\} d\frac{x^{2}}{2C_{\ell }} \\
&\;\;-8\frac{{\ \sqrt{C_\ell}}}{\sqrt{2\pi }}\lambda ^{3}\int_{\lambda
}^{\infty }\exp \left\{ -\frac{x^{2}}{2C_{\ell }}\right\} d\frac{x^{2}}{%
2C_{\ell }} +\frac{2}{\sqrt{\pi }}\lambda ^{4}\int_{\lambda }^{\infty }\exp
\left\{ -\frac{x^{2}}{2C_{\ell }}\right\} d\frac{x}{\sqrt{2C_{\ell }}} \\
&=2C_{\ell }^{2}\frac{\sqrt{8}}{\sqrt{2\pi }}\left\{ \Gamma (\frac{5}{2};%
\frac{\lambda ^{2}}{2C_{\ell }})-\frac{4}{\sqrt{2}}\frac{{\ \sqrt{2}}\lambda
}{\sqrt{2C_{\ell }}}\Gamma (2;\frac{\lambda ^{2}}{2C_{\ell }})+\frac{6}{%
\sqrt{2^{2}}}(\frac{\lambda }{\sqrt{{C_{\ell }}}})^{2}\Gamma (\frac{3}{2};%
\frac{\lambda ^{2}}{2C_{\ell }})\right. \\
&\;\;\left. - {\ \frac{\sqrt{2} \lambda^3}{C_\ell \sqrt{C_\ell}} \exp\left\{-%
\frac{\lambda^2}{2 C_\ell}\right\} }+\frac{4 {\ \sqrt{2 \pi}} }{\sqrt{2^{3}}}%
(\frac{\lambda }{\sqrt{2C_{\ell }}})^{4}(1-\Phi (\frac{\lambda }{\sqrt{{\ 2
C_{\ell }}}}))\right\}.
\end{align*}
Observing that $\Gamma(1;c)=e^{-c}$, we obtain
\begin{align*}
&E\left\{ a_{\ell 0}^{reg}(\lambda )^{4} \right\} \\
&=C_{\ell }^{2}\frac{4}{\sqrt{\pi }} \left\{ \sum_{k=2}^{5} (-1)^{k+1} \left(%
\frac{\nu _{\ell }}{\sqrt{2}}\right)^{5-k} \binom{4}{5-k} \Gamma \left(\frac{%
k}{2};\left(\frac{\nu _{\ell }}{\sqrt{2}}\right)^2 \right)+ 2 \sqrt \pi
\left(\frac{\nu _{\ell }}{\sqrt{2}}\right)^{4} (1-\Phi (\frac{\nu _{\ell }}{%
\sqrt{2}}) \right\} \text{ ,}
\end{align*}%
where $\nu _{\ell }:=\frac{\lambda }{\sqrt{C_{\ell }}}.$ Again, it is simple
to check that
\begin{eqnarray*}
\lim_{\lambda/ \sqrt{C_\ell} \rightarrow 0}E\left\{ a_{\ell 0}^{reg}(\lambda
)^{4} \right\} &=&2C_{\ell }^{2}\frac{\sqrt{8}}{\sqrt{2\pi }}\lim_{\lambda/
\sqrt{C_\ell} \rightarrow 0}\Gamma (\frac{5}{2};\frac{\lambda ^{2}}{2C_{\ell
}}) \\
&=&2C_{\ell }^{2}\frac{\sqrt{8}}{\sqrt{2\pi }}\frac{3}{4}\sqrt{\pi }%
=3C_{\ell }^{2}\text{ ,}
\end{eqnarray*}%
as expected, because $\lim_{\lambda/ \sqrt{C_\ell} \rightarrow 0}E\left\{
a_{\ell 0}^{reg}(\lambda )^{4} \right\} /[E\left\{ a_{\ell 0}^{reg}(\lambda
)^{2} \right\} ]^{2}=3$ provides the fourth moment of a standard Gaussian
variable. Note also that%
\begin{eqnarray*}
\psi(\frac{\lambda}{\sqrt{C_\ell}})&=&\frac{\int_{0}^{\infty }v^{4}\exp
\left\{ -\frac{1}{2}\left[ v+\frac{\lambda }{\sqrt{C_{\ell }}}\right]
^{2}\right\} dv}{ \left[ \int_{0}^{\infty }v^{2}\exp \left\{ -\frac{1}{2}%
\left[ v+\frac{\lambda }{\sqrt{C_{\ell }}}\right] ^{2}\right\} dv\right] ^{2}%
} \\
&=&\exp \left\{ \frac{1}{2}\frac{\lambda^2 }{C_{\ell } } \right\} \frac{%
\int_{0}^{\infty }v^{4}\exp \left\{ -\frac{1}{2}\left[ v^{2}+\frac{2\lambda {%
v} }{\sqrt{C_{\ell }}}\right] \right\} dv}{\left[ \int_{0}^{\infty
}v^{2}\exp \left\{ -\frac{1}{2}\left[ v^{2}+\frac{2\lambda {v} }{\sqrt{%
C_{\ell }}}\right] \right\} dv\right] ^{2}} \\
&=&\exp \left\{ \frac{1}{2} \frac{\lambda^2 }{ C_{\ell } } \right\} \frac{-5
\frac{\lambda }{\sqrt{C_{\ell }}} - (\frac{\lambda }{\sqrt{C_{\ell }}}%
)^3+\exp\{\frac{\lambda^2 }{2 C_{\ell }}\} \sqrt{\frac{\pi}{2}}(3+ 6 \frac{%
\lambda^2 }{C_{\ell }} +\frac{\lambda^4 }{C_{\ell }^2} )\text{Erfc}(\frac{%
\lambda }{\sqrt{ 2 C_{\ell }}}) }{\left[- \frac{\lambda }{\sqrt{C_{\ell }}}
+\exp\{\frac{\lambda^2 }{2 C_{\ell }}\} \sqrt{\frac{\pi}{2}}(1+ \frac{%
\lambda^2 }{C_{\ell }} )\text{Erfc}(\frac{\lambda }{\sqrt{ 2 C_{\ell }}}) %
\right]^2} \text{ ,}
\end{eqnarray*}
here we use the classical asymptotic expansion of the complementary error
function, i.e., for large $x$ we have $\text{Erfc}(x)=\frac{e^{-x^2}}{x
\sqrt{\pi}} (1- \frac{1}{2 x^2}+ \frac{3}{4 x^4}+O(x^{-5}))$, then
\begin{eqnarray*}
&&\lim_{\lambda / \sqrt{C_\ell} \to \infty} \psi(\frac{\lambda}{\sqrt{C_\ell}%
}) \left[ \exp \left\{ \frac{1}{2}\frac{\lambda^2 }{{C_{\ell }}}\right\}
\frac{15}{4} \frac{\lambda^3}{C_\ell^{\frac 3 2}} \right]^{-1} \\
&=&\lim_{\lambda / \sqrt{C_\ell} \to \infty}\frac{-5 \frac{\lambda }{\sqrt{%
C_{\ell }}} - (\frac{\lambda }{\sqrt{C_{\ell }}})^3+\exp\{\frac{\lambda^2 }{%
2 C_{\ell }}\} \sqrt{\frac{\pi}{2}}(3+ 6 \frac{\lambda^2 }{C_{\ell }} +\frac{%
\lambda^4 }{C_{\ell }^2} )\text{Erfc}(\frac{\lambda }{\sqrt{ 2 C_{\ell }}})
}{ \frac{15}{4} \frac{\lambda^3}{C_\ell^{\frac 3 2}} \left[- \frac{\lambda }{%
\sqrt{C_{\ell }}} +\exp\{\frac{\lambda^2 }{2 C_{\ell }}\} \sqrt{\frac{\pi}{2}%
}(1+ \frac{\lambda^2 }{C_{\ell }} )\text{Erfc}(\frac{\lambda }{\sqrt{ 2
C_{\ell }}}) \right]^2} \\
&=&\lim_{\lambda / \sqrt{C_\ell} \to \infty} \frac{-5 \frac{\lambda }{\sqrt{%
C_{\ell }}} - (\frac{\lambda }{\sqrt{C_{\ell }}})^3+\exp\{\frac{\lambda^2 }{%
2 C_{\ell }}\} \sqrt{\frac{\pi}{2}}(3+ 6 \frac{\lambda^2 }{C_{\ell }} +\frac{%
\lambda^4 }{C_{\ell }^2} ) \frac{e^{-\frac{\lambda^2}{2 C_\ell }}}{\frac{%
\lambda}{\sqrt{C_\ell}} \sqrt{\frac \pi 2}} (1- \frac{1}{\frac{\lambda^2}{%
C_\ell}}+ \frac{3}{2 \frac{\lambda^4}{C_\ell^2}})}{ \frac{15}{4} \frac{%
\lambda^3}{C_\ell^{\frac 3 2}} \left[- \frac{\lambda }{\sqrt{C_{\ell }}}
+\exp\{\frac{\lambda^2 }{2 C_{\ell }}\} \sqrt{\frac{\pi}{2}}(1+ \frac{%
\lambda^2 }{C_{\ell }} )\frac{e^{-\frac{\lambda^2}{2 C_\ell }}}{\frac{\lambda%
}{\sqrt{C_\ell}} \sqrt{\frac \pi 2}} (1- \frac{1}{\frac{\lambda^2}{C_\ell}}+
\frac{3}{2 \frac{\lambda^4}{C_\ell^2}}) \right]^2} \\
&=&\lim_{\lambda / \sqrt{C_\ell} \to \infty} \frac{3 \frac{\lambda^5}{{C_\ell%
}^\frac 5 2} (3+5 \frac{\lambda^2}{{C_\ell}})}{\frac{15}{4} \frac{\lambda^3}{%
C_\ell^{\frac 3 2}} (3+2 \frac{\lambda^2}{{C_\ell}})^2}=1 \text{.}
\end{eqnarray*}
\end{proof}

\begin{remark}
(Some Numerical Examples) It is instructive to provide some numerical
evidence on the kurtosis of the multipole components at the North Pole, as a
function of the penalization parameter $\lambda $ and the angular power
spectrum $C_{\ell }.$ It should be recalled that $\sum_{\ell }(2\ell
+1)C_{\ell }<\infty $ by finite variance, whence $C_{\ell }$ must decay at
least as fast as $\ell ^{-2-\tau },$ some $\tau >0,$ as $\ell \rightarrow
\infty .$ For instance, considering some physically realistic values for the
power spectrum of CMB and a fixed penalization parameter $\lambda =1,$ we
have

\begin{equation*}
\begin{array}{cccccccccc}
\ell & 10 & 20 & 30 & 40 & 50 & 60 & 70 & 80 & 200 \\
C_{\ell } & 48.20 & 13.7 & 7.17 & 4.8 & 3.7 & 2.9 & 2.4 & 2.1 & 0.76 \\
\kappa_{\ell} & 3.50 & 4.08 & 4.65 & 5.19 & 5.65 & 6.21 & 6.76 & 7.22 & 15.39%
\end{array}%
\end{equation*}
\end{remark}

\subsection{Asymptotic Behavior of the Angular Trispectrum}

Exploiting the computations developed so far, it is also possible to provide
analytic expressions for the trispectra at the various multipoles, as
follows. We start recalling the results we have earlier established on the
moments of the spherical harmonic coefficients $\left\{ a_{\ell m}\right\} ,$
under the complex and real-valued regularization schemes. More precisely, we
have

\begin{itemize}
\item For all $\ell ,m$ we have%
\begin{equation}
E\{a_{\ell m}^{reg}\}=E\{a_{\ell m}^{reg\ast }\}=0\text{ ;}  \label{zero}
\end{equation}

\item For all $\ell ,m=0$
\begin{equation*}
E\left\{ (a_{\ell 0}^{reg})^{2} \right\} =E\left\{ (a_{\ell 0}^{reg\ast
})^{2}\right\}=\gamma _{0}(\frac{\lambda }{\sqrt{C_{\ell }}})\text{ ;}
\end{equation*}

\item Under the complex-valued regularization scheme, for $m\neq 0$%
\begin{equation*}
E\{\left\vert a_{\ell m}^{reg}\right\vert ^{2}\}=\gamma _{1}(\frac{\lambda }{%
\sqrt{C_{\ell }}})\text{ ,}
\end{equation*}%
while in the real-valued framework%
\begin{equation*}
E\{( a_{\ell m}^{reg {*}})^{2}\}=\gamma _{0}(\frac{\lambda }{\sqrt{C_{\ell }}%
})\text{ ;}
\end{equation*}

\item Finally for the fourth-order moments, for all ${\ell}$%
\begin{equation*}
E\left\{ (a_{\ell 0}^{reg}) ^{4}\right\}=E\{(a_{\ell {0}}^{reg\ast
})^4\}=\gamma _{2}(\frac{\lambda }{\sqrt{C_{\ell }}})
\end{equation*}%
and for $m\neq 0$%
\begin{equation*}
{E\{\left\vert a_{\ell m}^{reg }\right\vert ^{4}\}}=E\{(a_{\ell m}^{reg\ast
})^{4}\}=\gamma _{3}(\frac{\lambda }{\sqrt{C_{\ell }}})
\end{equation*}%
where
\begin{align*}
\gamma _{2}(\nu_\ell)&:=C_{\ell }^{2}\frac{4}{\sqrt{\pi }} \left\{
\sum_{k=2}^{5} (-1)^{k+1} (\frac{\nu_\ell}{\sqrt 2})^{5-k} \binom{4}{5-k}
\Gamma (\frac{k}{2};(\frac{\nu_\ell}{\sqrt 2})^2)+ 2 \sqrt \pi (\frac{%
\nu_\ell}{\sqrt 2})^{4} (1-\Phi (\frac{\nu_\ell}{\sqrt 2})) \right\}
\end{align*}
and%
\begin{align*}
\gamma _{3}(\nu_\ell)&= C_\ell^2 \sum_{k=0}^4 (-1)^k \binom{4}{k}
\nu_\ell^{4-k} \Gamma(\frac {k+2} 2; \nu_\ell^2) \text{,}
\end{align*}
\end{itemize}

where $\nu_\ell=\frac{\lambda}{\sqrt{C_\ell}}$. For {(\ref{zero})}, we note
first that it would be trivially true for an isotropic random field, but it
requires to be checked under anisotropy. In any case, the proof is
straightforward, indeed we have
\begin{eqnarray*}
E\{a_{\ell m}^{reg} \}&=&E\left\{ \rho_{\ell m}^{{\ reg}} \exp (i\psi _{\ell
m}^{{obs}})\right\} \\
&=&{\ \frac{1}{2\pi}}\int_{0}^{\infty }\int_{0}^{2\pi }r\exp (i\theta
)f_{\rho; {\ell }}(r)drd\theta \\
&=&{\ \frac{1}{2\pi}}\int_{0}^{\infty }r f_{\rho; {\ell}}(r)\left\{
\int_{0}^{2\pi }\exp (i\theta )d\theta \right\} dr=0\text{ .}
\end{eqnarray*}%
A similar argument will actually cover any product of an odd number of
spherical harmonic coefficients, because $\int_{0}^{2\pi }\exp (ik\theta
)d\theta =0$ for all non-zero integers $k$. We only need to study $%
E\{\left\vert a_{\ell m}^{reg}\right\vert ^{4}\},$ for which we have%
\begin{eqnarray*}
E\{\left\vert a_{\ell m}^{reg}\right\vert ^{4} \} &=&\int_{\lambda }^{\infty
}(r-\lambda )^{4}2\frac{r}{C_{\ell }}\exp (-\frac{r^{2}}{C_{\ell }})dr \\
&=&\int_{\lambda }^{\infty }(r-\lambda )^{4}\exp (-\frac{r^{2}}{C_{\ell }})d%
\frac{r^{2}}{C_{\ell }} \\
&=&C_{\ell }^{2}\int_{\lambda ^{2}/C_{\ell }}^{\infty }(u^{2}-4u^{3/2}\frac{%
\lambda }{\sqrt{C_{\ell }}}+6u\frac{\lambda ^{2}}{C_{\ell }}-4\sqrt{u}\frac{%
\lambda ^{3}}{\sqrt{C_{\ell }^{3}}}+\frac{\lambda ^{4}}{C_{\ell }^{2}})\exp
(-u)du \\
&=&C_{\ell }^{2}\left\{ \Gamma (3;\frac{\lambda ^{2}}{C_{\ell }})-4\frac{%
\lambda }{\sqrt{C_{\ell }}}\Gamma (\frac{5}{2};\frac{\lambda ^{2}}{C_{\ell }}%
)+6\frac{\lambda ^{2}}{C_{\ell }}\Gamma ({2};\frac{\lambda ^{2}}{C_{\ell }}%
)-4\frac{\lambda ^{3}}{\sqrt{C_{\ell }^{3}}}\Gamma ({\ \frac 3 2};\frac{%
\lambda ^{2}}{C_{\ell }})+\frac{\lambda ^{4}}{C_{\ell }^{2}}\exp (-\frac{%
\lambda ^{2}}{C_{\ell }}))\right\} \text{ ,}
\end{eqnarray*}%
using repeatedly integration by parts on the incomplete Gamma function.

\begin{remark}
It should be noted that, as expected,%
\begin{equation*}
\lim_{\lambda \rightarrow 0}E\{\left\vert a_{\ell m}^{reg}\right\vert
^{2}\}=C_{\ell }\text{ ,}
\end{equation*}%
and more generally%
\begin{equation*}
\lim_{C_{\ell }/\lambda ^{2}\rightarrow \infty }\frac{1}{C_{\ell }}%
E\{\left\vert a_{\ell m}^{reg}\right\vert ^{2}\}=1\text{ .}
\end{equation*}%
Moreover
\begin{equation*}
\lim_{\lambda \rightarrow 0}{E\{\left\vert a_{\ell m}^{reg}\right\vert ^{4}\}%
}/{C_{\ell }^{2}}=2\text{ ,}
\end{equation*}%
again as expected, because in the limiting Gaussian case%
\begin{align*}
E\{\left\vert a_{\ell m}\right\vert ^{4}\}&=E\{[{\func{Re}}(a_{lm})^{2}+{%
\func{Im}}(a_{lm})^{2}]^{2}\} \\
&=E\{{\func{Re}}(a_{lm})^{4}\}+E\{{\func{Im}}(a_{lm})^{4}\}+2E\{{\func{Re}}%
(a_{lm})^{2}\}E\{{\func{Im}}(a_{lm})^{2}\} \\
&=\frac{3}{4}C_{\ell }^{2}+\frac{3}{4}C_{\ell }^{2}+\frac{2}{4}C_{\ell
}^{2}=2C_{\ell }^{2}\text{ .}
\end{align*}
\end{remark}

The previous result can be generalized as follows.

\begin{proposition}
For all $p=1,2,3,...$ and for $m \ne 0$, we have%
\begin{equation*}
E\left\{ \left\vert a_{\ell m}^{reg}\right\vert ^{2p}\right\} =C_{\ell
}^{p}\sum_{k=0}^{2p}(-1)^{k} \binom{2p}{k} \nu_\ell ^{{2 p-k }}\Gamma (\frac{%
{k+2}}{2}; \nu_\ell^2)\text{ ,}
\end{equation*}%
where $\binom{2p}{k}=\frac{(2p)!}{(2p-k)!k!}$ is the standard binomial
coefficient.
\end{proposition}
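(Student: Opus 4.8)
The plan is to reduce the $2p$-th absolute moment to a single real integral that generalizes verbatim the $E\{|a_{\ell m}^{reg}|^{4}\}$ computation carried out immediately above (the case $p=2$), and then to read off the stated sum by a binomial expansion. The starting point is the solution Lemma: for $m\neq 0$ the regularized coefficient has modulus $|a_{\ell m}^{reg}|=|\rho_{\ell m}^{obs}-\lambda|_{+}$, independently of the phase $\psi_{\ell m}^{obs}$. Combining this with the Rayleigh density $f_{\rho;\ell}(r)=\frac{2r}{C_{\ell}}\exp(-r^{2}/C_{\ell})$ established in the first Lemma, I would write
\begin{equation*}
E\{|a_{\ell m}^{reg}|^{2p}\}=\int_{\lambda}^{\infty}(r-\lambda)^{2p}\,\frac{2r}{C_{\ell}}\exp\left(-\frac{r^{2}}{C_{\ell}}\right)dr\text{ .}
\end{equation*}

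The key step is the substitution $u=r^{2}/C_{\ell}$, which turns the Rayleigh weight into the pure exponential measure $\exp(-u)\,du$ and sends the lower limit to $\nu_{\ell}^{2}=\lambda^{2}/C_{\ell}$. Since $r=\sqrt{C_{\ell}\,u}$, one has $r-\lambda=\sqrt{C_{\ell}}\,(\sqrt{u}-\nu_{\ell})$, so a factor $C_{\ell}^{p}$ factors out cleanly and
\begin{equation*}
E\{|a_{\ell m}^{reg}|^{2p}\}=C_{\ell}^{p}\int_{\nu_{\ell}^{2}}^{\infty}(\sqrt{u}-\nu_{\ell})^{2p}\exp(-u)\,du\text{ .}
\end{equation*}
This mirrors exactly the intermediate identity used in the $p=2$ case just proved, so no new analytic input is needed.

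To finish, I would expand the integrand by the binomial theorem, grouping so that $\sqrt{u}$ carries the summation power,
\begin{equation*}
(\sqrt{u}-\nu_{\ell})^{2p}=\sum_{k=0}^{2p}\binom{2p}{k}u^{k/2}(-\nu_{\ell})^{2p-k}\text{ ,}
\end{equation*}
and note $(-1)^{2p-k}=(-1)^{k}$. Integrating term by term (legitimate, as the sum is finite), each summand produces an incomplete Gamma function through the very definition $\Gamma(s;c)=\int_{c}^{\infty}x^{s-1}\exp(-x)\,dx$ used throughout the paper:
\begin{equation*}
\int_{\nu_{\ell}^{2}}^{\infty}u^{k/2}\exp(-u)\,du=\Gamma\left(\frac{k+2}{2};\nu_{\ell}^{2}\right)\text{ .}
\end{equation*}
Collecting the factors $C_{\ell}^{p}$, $(-1)^{k}$, $\binom{2p}{k}$ and $\nu_{\ell}^{2p-k}$ yields precisely the claimed expression.

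There is no serious obstacle here: the statement is a routine generalization of the fourth-moment calculation, and the only point demanding care is the bookkeeping in the binomial expansion. Specifically, one must choose the grouping so that the exponent on $\nu_{\ell}$ comes out as $2p-k$ while the first argument of the incomplete Gamma comes out as $(k+2)/2$, rather than the mirror-image indexing $k\leftrightarrow 2p-k$ that results from letting $\nu_{\ell}$ carry the summation power (the two are equal, but only one matches the displayed formula on the nose). As consistency checks I would confirm that $p=1$ recovers $\gamma_{1}(\nu_{\ell})$, using $\Gamma(1;c)=e^{-c}$, the identity (\ref{iniez2}) for $\Gamma(2;c)$, and (\ref{iniez1}) for $\Gamma(\tfrac{3}{2};c)$, and that $p=2$ reproduces term-by-term the expression for $\gamma_{3}(\nu_{\ell})$ displayed just above.
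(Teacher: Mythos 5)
Your proposal is correct and takes exactly the paper's approach: the paper's own proof is just the remark that the argument is ``identical to the previous arguments,'' namely the fourth-moment computation $E\{|a_{\ell m}^{reg}|^{4}\}$ via the substitution $u=r^{2}/C_{\ell}$ and a binomial expansion into incomplete Gamma functions, which is precisely what you write out for general $p$ (your proposal is in effect the full version of the proof the paper omits). Your bookkeeping is right --- $(-1)^{2p-k}=(-1)^{k}$, $\int_{\nu_{\ell}^{2}}^{\infty}u^{k/2}e^{-u}\,du=\Gamma(\frac{k+2}{2};\nu_{\ell}^{2})$ --- and your consistency checks at $p=1$ against $\gamma_{1}$ and at $p=2$ against $\gamma_{3}$ both hold.
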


\begin{proof}
The proof is identical to the previous arguments, and hence it is not
repeated for brevity's sake.
\end{proof}

An important consequence of these results is the following

\begin{lemma}
We have that%
\begin{equation*}
\lim_{C_{\ell }\rightarrow 0} \exp\left\{-\frac{\lambda^2}{C_\ell}\right\}
\frac{E\{\left\vert a_{\ell m}^{reg}\right\vert ^{4}\}}{\left[ E\{\left\vert
a_{\ell m}^{reg}\right\vert ^{2}\}\right] ^{2}} = 6\text{ ,}
\end{equation*}%
whence $\frac{E\{\left\vert a_{\ell m}^{reg}\right\vert ^{4}\}}{\left[
E\{\left\vert a_{\ell m}^{reg}\right\vert ^{2}\}\right] ^{2}}$ diverges
superexponentially.
\end{lemma}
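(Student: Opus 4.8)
The plan is to return to the integral representations of the moments rather than to the closed-form incomplete-Gamma expressions, since a rescaling then makes the constant $6$ transparent and the limit interchange routine. Writing $\nu_\ell=\lambda/\sqrt{C_\ell}$ and substituting first $u=r/\sqrt{C_\ell}$ and then $w=u-\nu_\ell$ in the polar-coordinate integral (\ref{iniez3}) and in the analogous fourth-moment integral, I would record the two representations
\[
E\{|a_{\ell m}^{reg}|^{2}\}=2C_\ell\, e^{-\nu_\ell^{2}}I_2,\qquad E\{|a_{\ell m}^{reg}|^{4}\}=2C_\ell^{2}\, e^{-\nu_\ell^{2}}I_4,
\]
where $I_k:=\int_0^\infty w^{k}(w+\nu_\ell)\exp\{-w^{2}-2\nu_\ell w\}\,dw$. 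The point of this bookkeeping is that all powers of $C_\ell$ and, crucially, all the Gaussian exponential factors cancel in the target quantity, leaving
\[
\exp\Big\{-\frac{\lambda^{2}}{C_\ell}\Big\}\frac{E\{|a_{\ell m}^{reg}|^{4}\}}{[E\{|a_{\ell m}^{reg}|^{2}\}]^{2}}=\frac{I_4}{2\,I_2^{2}} .
\]
Thus the claim reduces to showing $I_4/(2I_2^{2})\to 6$, equivalently $I_4/I_2^{2}\to 12$, as $\nu_\ell\to\infty$ (which is exactly the regime $C_\ell\to 0$, since $\lambda$ is held fixed).

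Next I would extract the leading asymptotics of $I_k$ by Laplace's method. Because the weight $\exp\{-2\nu_\ell w\}$ concentrates the mass near $w=0$ on the scale $w\sim 1/\nu_\ell$, the natural move is the rescaling $w=s/(2\nu_\ell)$, which gives
\[
I_k=\frac{\nu_\ell}{(2\nu_\ell)^{k+1}}\int_0^\infty s^{k}\Big(1+\frac{s}{2\nu_\ell^{2}}\Big)e^{-s}\exp\Big\{-\frac{s^{2}}{4\nu_\ell^{2}}\Big\}\,ds .
\]
Here I would invoke dominated convergence: for $\nu_\ell\ge 1$ the integrand is bounded above by the integrable function $s^{k}(1+s)e^{-s}$, while pointwise the bracket and the Gaussian correction tend to $1$, so the integral converges to $\int_0^\infty s^{k}e^{-s}\,ds=k!$. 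Hence $I_k=\dfrac{k!}{2^{k+1}\nu_\ell^{k}}\big(1+o(1)\big)$ as $\nu_\ell\to\infty$.

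Finally I would substitute the two leading terms $I_2\sim \tfrac{1}{4}\nu_\ell^{-2}$ and $I_4\sim \tfrac{3}{4}\nu_\ell^{-4}$ and compute
\[
\frac{I_4}{2I_2^{2}}\longrightarrow \frac{\tfrac{3}{4}\nu_\ell^{-4}}{2\cdot\big(\tfrac14\nu_\ell^{-2}\big)^{2}}=\frac{3/4}{1/8}=6,
\]
which is the assertion; the superexponential divergence of the unnormalized ratio then follows because the reciprocal prefactor $\exp\{\lambda^{2}/C_\ell\}$ blows up as $C_\ell\to 0$. The only genuinely delicate step is the justification of the asymptotics of $I_k$, i.e. the uniform domination permitting the limit to pass inside the integral; once that is in place the ratio of the nonzero leading coefficients is immediate. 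The rescaled form above is chosen precisely to sidestep the alternative route through the closed-form incomplete-Gamma/complementary-error-function expressions, along which the four leading powers of $\nu_\ell$ all cancel and one is forced to expand to order $\nu_\ell^{-4}$ and track the surviving constant $3/2$ by hand.
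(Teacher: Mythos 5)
Your proposal is correct, and while it starts exactly where the paper does, it finishes by a genuinely different technique. Both arguments perform the same reduction: substituting $u=r/\sqrt{C_\ell}$ and then shifting by $\nu_\ell=\lambda/\sqrt{C_\ell}$, the paper likewise arrives at the identity $\exp\{-\lambda^{2}/C_\ell\}\,E\{|a_{\ell m}^{reg}|^{4}\}/[E\{|a_{\ell m}^{reg}|^{2}\}]^{2}=I_4/(2I_2^{2})$ with precisely your integrals $I_k=\int_0^\infty w^k(w+\nu_\ell)e^{-w^2-2\nu_\ell w}\,dw$. The difference is in evaluating the limit of $I_4/(2I_2^2)$: the paper computes $I_2$ and $I_4$ in closed form via incomplete Gamma functions and the complementary error function, inserts the expansion $\mathrm{Erfc}(x)=\frac{e^{-x^2}}{x\sqrt{\pi}}\bigl(1-\frac{1}{2x^2}+\frac{3}{4x^4}-\frac{15}{8x^6}+O(x^{-7})\bigr)$, and tracks the cancellation of all leading powers until the limit emerges as $\lim 24\nu^{6}(15+4\nu^{2})/(15-6\nu^{2}+4\nu^{4})^{2}=6$. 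Your Laplace rescaling $w=s/(2\nu_\ell)$ with dominated convergence (the bound $s^k(1+s)e^{-s}$ for $\nu_\ell\geq 1$ is valid, since $s/(2\nu_\ell^2)\leq s$ and $e^{-s^2/(4\nu_\ell^2)}\leq 1$) replaces this bookkeeping by the clean asymptotic $I_k\sim k!/(2^{k+1}\nu_\ell^{k})$, and your arithmetic $\frac{3/4}{1/8}=6$ is right. What your route buys: it is more elementary (no special-function expansions), it makes the constant structurally transparent as a ratio of factorials, $\frac{4!/2^5}{2(2!/2^3)^2}=6$, and it extends verbatim to all even moments, dovetailing with the paper's Proposition on $E\{|a_{\ell m}^{reg}|^{2p}\}$. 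What the paper's route buys: the exact $\mathrm{Erfc}$ expressions are reusable elsewhere in the paper (e.g., in the North-Pole trispectrum theorem and the numerical table), and the expansion delivers explicit correction terms rather than a bare $o(1)$. One cosmetic slip in your closing remark: along the closed-form route the surviving numerator constant at order $\nu_\ell^{-4}$ is $3/4$ (against $1/8$ from the squared denominator), not $3/2$ --- immaterial, since that is exactly the route your argument avoids.
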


\begin{proof}
It suffices to notice that
\begin{eqnarray*}
\frac{E\{\left\vert a_{\ell m}^{reg}\right\vert ^{4}\}}{\left[ E\{\left\vert
a_{\ell m}^{reg}\right\vert ^{2}\}\right] ^{2}}&=&\frac{C_{\ell }^{2}
\int_{\lambda }^{\infty } (\frac{r-\lambda }{\sqrt{C_{\ell }}})^{4} {\ \frac{%
2r}{\sqrt{C_\ell}}} \exp(-\frac{r^{2}}{C_{\ell }})d {\ \frac{r}{\sqrt{
C_{\ell }}}}}{\left\{ C_{\ell }\int_{\lambda }^{\infty }(\frac{r-\lambda }{%
\sqrt{C_{\ell }}})^{2}2\frac{r}{\sqrt{C_{\ell }}}\exp (-\frac{r^{2}}{C_{\ell
}})d\frac{r}{\sqrt{C_{\ell }}}\right\} ^{2}} \\
&=&\frac{\int_{\lambda /\sqrt{C_{\ell }}}^{\infty }(u-\frac{\lambda }{\sqrt{%
C_{\ell }}})^{4} {2u} \exp (-u^{2})du}{\left\{ 2\int_{\lambda /\sqrt{C_{\ell
}}}^{\infty }(u-\frac{\lambda }{\sqrt{C_{\ell }}})^{2}u\exp
(-u^{2})du\right\} ^{2}} \\
&=&\frac{\int_{0}^{\infty }v^{4} {\ 2 (v+\frac{\lambda}{\sqrt{C_\ell}})}\exp
(-(v+\frac{\lambda }{\sqrt{C_{\ell }}})^{2})dv}{\left\{ 2\int_{0}^{\infty
}v^{2}(v+\frac{\lambda }{\sqrt{C_{\ell }}})\exp (-(v+\frac{\lambda }{\sqrt{%
C_{\ell }}})^{2})d{v}\right\} ^{2}} \\
&=&\exp (\frac{\lambda ^{2}}{C_{\ell }}) \frac{\int_{0}^{\infty }v^{4} {(v+%
\frac{\lambda }{\sqrt{C_{\ell }}}) } \exp (-v^{2}-2\frac{\lambda v}{\sqrt{%
C_{\ell }}})dv}{{2}\left\{ \int_{0}^{\infty }v^{2}(v+\frac{\lambda }{\sqrt{%
C_{\ell }}})\exp (-v^{2}-2\frac{\lambda v}{\sqrt{C_{\ell }}})d {v} \right\}
^{2}}\text{ ,}
\end{eqnarray*}
where, by applying the expansion of the $\text{Erfc}$ function $\frac{%
e^{-x^2}}{x \sqrt{\pi}} (1- \frac{1}{2 x^2}+ \frac{3}{4 x^4}-\frac{15}{8 x^6}%
+O(x^{-7}))$, for large $x$, we have
\begin{eqnarray*}
&&\lim_{\lambda / \sqrt{C_\ell} \to \infty}\frac{\int_{0}^{\infty }v^{4} {(v+%
\frac{\lambda }{\sqrt{C_{\ell }}}) } \exp (-v^{2}-2\frac{\lambda v}{\sqrt{%
C_{\ell }}})dv}{{2}\left[ \int_{0}^{\infty }v^{2}(v+\frac{\lambda }{\sqrt{%
C_{\ell }}})\exp (-v^{2}-2\frac{\lambda v}{\sqrt{C_{\ell }}})d {v} \right]
^{2}} \\
&=&\lim_{\lambda / \sqrt{C_\ell} \to \infty} \frac{1+\frac{\lambda^2 }{{%
C_{\ell }}} -\frac {\sqrt{\pi}}{2} \frac{\lambda }{\sqrt{C_{\ell }}} \exp\{%
\frac{\lambda^2}{{C_{\ell }}}\} (3+2 \frac{\lambda^2}{{C_{\ell }}}) \text{%
Erfc}(\frac{\lambda }{\sqrt{C_{\ell }}})}{2 \left[ \frac{1}{2} (1-\exp\{%
\frac{\lambda^2 }{{C_{\ell }}}\} \sqrt{\pi} \frac{\lambda }{\sqrt{C_{\ell }}}
erfc(\frac{\lambda }{\sqrt{C_{\ell }}}))\right]^2} \\
&=&\lim_{\lambda / \sqrt{C_\ell} \to \infty} \frac{1+\frac{\lambda^2 }{{%
C_{\ell }}} -\frac {\sqrt{\pi}}{2} \frac{\lambda }{\sqrt{C_{\ell }}} \exp\{%
\frac{\lambda^2}{{C_{\ell }}}\} (3+2 \frac{\lambda^2}{{C_{\ell }}}) \frac{%
e^{-\frac{\lambda^2}{ C_\ell }}}{\frac{\lambda}{\sqrt{C_\ell}} \sqrt{\pi}}
(1- \frac{1}{2 \frac{\lambda^2}{C_\ell}}+ \frac{3}{4 \frac{\lambda^4}{%
C_\ell^2}}-\frac{15}{8 x^6})}{2 \left[ \frac{1}{2} (1-\exp\{\frac{\lambda^2
}{{C_{\ell }}}\} \sqrt{\pi} \frac{\lambda }{\sqrt{C_{\ell }}} \frac{e^{-%
\frac{\lambda^2}{ C_\ell }}}{\frac{\lambda}{\sqrt{C_\ell}} \sqrt{\pi}} (1-
\frac{1}{2 \frac{\lambda^2}{C_\ell}}+ \frac{3}{4 \frac{\lambda^4}{C_\ell^2}}-%
\frac{15}{8 x^6}))\right]^2} \\
&=&\lim_{\lambda / \sqrt{C_\ell} \to \infty} \frac{24 \frac{\lambda^6}{{%
C_\ell}^3}(15+4 \frac{\lambda^2}{{C_\ell}})}{(15-6 \frac{\lambda^2}{{C_\ell}}%
+4 \frac{\lambda^4}{{C_\ell}^2})^2}=6 \text{ . }
\end{eqnarray*}
\end{proof}

In view of the previous results, it is simple to provide exact analytic
expressions for the expected trispectra $E\{T_{\ell }\}$ under both
regularization schemes, and to study their asymptotic behavior as the
frequencies increase. We obtain

\begin{theorem}
We have%
\begin{align*}
E\left\{ T_{\ell }^{reg}(\theta ,\phi ) ^{4}\right\}&=\gamma _{2}(\frac{%
\lambda }{\sqrt{C_{\ell }}})\left\vert Y_{\ell 0}(\theta ,\phi )\right\vert
^{4}+\gamma _{3}(\frac{\lambda }{\sqrt{C_{\ell }}})\sum_{m\neq 0}\left\vert
Y_{\ell m}(\theta ,\phi )\right\vert ^{4} \\
&\:\:+{2}\gamma _{0}(\frac{\lambda }{\sqrt{C_{\ell }}})\gamma _{1}(\frac{%
\lambda }{\sqrt{C_{\ell }}})\left\vert Y_{\ell 0}(\theta ,\phi )\right\vert
^{2}\left\{ \frac{2\ell +1}{4\pi }-\left\vert Y_{\ell 0}(\theta ,\phi
)\right\vert ^{2}\right\} \\
&\:\:+\gamma _{1}^{2}(\frac{\lambda }{\sqrt{C_{\ell }}}){\sum_{m^{\prime
}\neq m}, {\ m,m^{\prime }\neq 0}}\left\vert Y_{\ell m}(\theta ,\phi
)\right\vert ^{2}\left\vert Y_{\ell m^{\prime }}(\theta ,\phi )\right\vert
^{2}.
\end{align*}%
Likewise%
\begin{align*}
E\left\{ T_{\ell }^{reg\ast }(\theta ,\phi )\right\} ^{4}&=\gamma _{2}(\frac{%
\lambda }{\sqrt{C_{\ell }}})\sum_{m}\left\vert Y_{\ell m}^{\mathcal{R}%
}(\theta ,\phi )\right\vert ^{4}+\gamma _{0}^{2}(\frac{\lambda }{\sqrt{%
C_{\ell }}})\sum_{m^{\prime }\neq m}\left\vert Y_{\ell m}(\theta ,\phi
)\right\vert ^{2}\left\vert Y_{\ell m^{\prime }}(\theta ,\phi )\right\vert
^{2}.
\end{align*}%
As $\lambda /\sqrt{C_{\ell }}\rightarrow \infty $, the trispectrum is then
asymptotic to
\begin{equation*}
\lim_{\lambda /\sqrt{C_{\ell }}\rightarrow \infty }\frac{E\left\{ T_{\ell
}^{reg}(\theta ,\phi )^{4}\right\} }{E\left\{ T_{\ell
}^{reg}(0,0)^{4}\right\} P_{\ell }^{4}(\cos \theta )}=1\text{ .}
\end{equation*}%
For the real-valued regularization scheme we get%
\begin{equation*}
\lim_{\lambda /\sqrt{C_{\ell }}\rightarrow \infty }\frac{E\left\{ T_{\ell
}^{reg\ast }(\theta ,\phi )^{4}\right\} }{E\{T_{\ell }^{reg\ast
}(0,0)^{4}\}V_{\ell }(\theta ,\phi )}=1\text{,}
\end{equation*}%
where as $\ell \to \infty$
\begin{equation*}
V_{\ell }(\theta ,\phi )=\left( \frac{4\pi }{2\ell +1}\right)
^{2}\sum_{m}(Y_{\ell ,m}^{\mathcal{R}})^{4}\rightarrow
\begin{cases}
1, & \text{for }(\theta ,\phi )=(0,0), \\
0\text{ a.e.}, & \text{otherwise }.%
\end{cases}%
\end{equation*}
\end{theorem}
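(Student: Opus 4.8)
The plan is to expand $T_\ell^{reg}(\theta,\phi)^4=\sum_{m_1,m_2,m_3,m_4}a_{\ell m_1}^{reg}a_{\ell m_2}^{reg}a_{\ell m_3}^{reg}a_{\ell m_4}^{reg}\,Y_{\ell m_1}(\theta,\phi)\cdots Y_{\ell m_4}(\theta,\phi)$ and to evaluate the expectation term by term. Two structural facts drive the computation: the coefficients $\{a_{\ell m}^{reg}\}_{m\ge 0}$ are independent with the uniform phase $\psi_{\ell m}^{obs}$, and reality of the field forces $a_{\ell,-m}^{reg}=(-1)^m\overline{a_{\ell m}^{reg}}$, so that a conjugate pair $a_{\ell m}^{reg}a_{\ell,-m}^{reg}=(-1)^m|a_{\ell m}^{reg}|^2$ is phase-free while any lone factor carries a genuine uniform phase. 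As already observed in the excerpt, a product whose phases do not cancel averages to zero; hence $E\{a_{\ell m_1}^{reg}\cdots a_{\ell m_4}^{reg}\}$ is nonzero only for those index multisets in which every nonzero frequency occurs with both signs equally often. This leaves exactly the patterns $\{0,0,0,0\}$, $\{0,0,k,-k\}$, $\{k,k,-k,-k\}$ and $\{k,-k,j,-j\}$ with $j\neq k$, carrying respectively the moment factors $\gamma_2$, $\gamma_0\gamma_1$, $\gamma_3$ and $\gamma_1^2$ computed above.

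I would then collect the surviving terms, systematically using $Y_{\ell m}Y_{\ell,-m}=(-1)^m|Y_{\ell m}|^2$ so that the sign factors produced by the harmonics cancel those produced by the coefficient moments, and the addition theorem $\sum_m|Y_{\ell m}(\theta,\phi)|^2=\frac{2\ell+1}{4\pi}$ to rewrite $\sum_{m\neq0}|Y_{\ell m}|^2=\frac{2\ell+1}{4\pi}-|Y_{\ell0}|^2$. The delicate step is the combinatorial bookkeeping --- counting the ordered arrangements of each multiset and reassembling them into the displayed closed form --- and this is where I expect the calculation to require the most care. The real-valued scheme is lighter: there all $a_{\ell m}^{reg\ast}$ are real, symmetric and independent with $E\{(a_{\ell m}^{reg\ast})^2\}=\gamma_0$ and $E\{(a_{\ell m}^{reg\ast})^4\}=\gamma_2$, so the fourth moment splits into an ``all four indices equal'' diagonal (giving $\gamma_2\sum_m(Y_{\ell m}^{\mathcal R})^4$) and a ``two disjoint pairs'' cross term (giving the $\gamma_0^2$ contribution), i.e.\ the usual Wick-type decomposition with the non-Gaussian quartic moment $\gamma_2$ in place of the Gaussian value $3\gamma_0^2$ on the diagonal.

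For the asymptotics I would feed in the rates established earlier. Up to polynomial factors in $\nu_\ell=\lambda/\sqrt{C_\ell}$ we have $\gamma_0\asymp e^{-\nu_\ell^2/2}$, $\gamma_1\asymp e^{-\nu_\ell^2}$, $\gamma_2\asymp e^{-\nu_\ell^2/2}$ (from $\gamma_2/\gamma_0^2=\sqrt{\pi/2}\,\psi(\nu_\ell)\asymp\nu_\ell^3e^{\nu_\ell^2/2}$) and $\gamma_3\asymp e^{-\nu_\ell^2}$ (from $\gamma_3/\gamma_1^2\asymp 6e^{\nu_\ell^2}$, cf.\ (\ref{ghione3}) and the preceding Lemma). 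Thus $\gamma_2$ exceeds each of $\gamma_3$, $\gamma_0\gamma_1$ and $\gamma_1^2$ by at least a factor $e^{\nu_\ell^2/2}$, so the first term $\gamma_2|Y_{\ell0}(\theta,\phi)|^4$ dominates $E\{T_\ell^{reg}(\theta,\phi)^4\}$; crucially, since $C_\ell\le K\ell^{-\alpha}$ makes $\nu_\ell^2$ grow like $\ell^\alpha$, this super-exponential smallness of the remaining ratios overwhelms the at most polynomial-in-$\ell$ factors coming from the ratios of harmonic sums and from $P_\ell^{-4}(\cos\theta)$ (finite wherever $P_\ell(\cos\theta)\neq0$, hence a.e.). Finally, $Y_{\ell m}(0,0)=0$ for $m\neq0$ gives $E\{T_\ell^{reg}(0,0)^4\}=\gamma_2\big(\tfrac{2\ell+1}{4\pi}\big)^2$ exactly, while $|Y_{\ell0}(\theta,\phi)|^4=\big(\tfrac{2\ell+1}{4\pi}\big)^2P_\ell^4(\cos\theta)$, so dividing yields the limit $1$ at every $(\theta,\phi)$ with $P_\ell(\cos\theta)\neq0$.

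In the real-valued scheme the identical dominance of $\gamma_2$ gives $E\{T_\ell^{reg\ast}(\theta,\phi)^4\}\sim\gamma_2\sum_m(Y_{\ell m}^{\mathcal R}(\theta,\phi))^4$, and $V_\ell$ is defined exactly so that $E\{T_\ell^{reg\ast}(0,0)^4\}\,V_\ell(\theta,\phi)=\gamma_2\sum_m(Y_{\ell m}^{\mathcal R}(\theta,\phi))^4$; hence the stated ratio tends to $1$. It remains to analyse $V_\ell$ as $\ell\to\infty$: the pole value is immediate, $V_\ell(0,0)=\big(\tfrac{4\pi}{2\ell+1}\big)^2(Y_{\ell0}(0,0))^4=1$, while off the pole I would use the bound $V_\ell\le\frac{4\pi}{2\ell+1}\max_m(Y_{\ell m}^{\mathcal R}(\theta,\phi))^2$ together with the sup-norm/turning-point (Hilb) asymptotics, which give $\max_m(Y_{\ell m}^{\mathcal R}(\theta,\phi))^2=o(\ell)$ for fixed $\theta\in(0,\pi)$ and hence $V_\ell\to0$; alternatively the Gaunt-integral estimate $\int_{S^2}V_\ell\,dx\to0$ delivers the stated a.e.\ convergence. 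This delocalization estimate for spherical harmonics is the one genuinely analytic ingredient and the main obstacle in the last assertion; all other steps reduce to the moment bookkeeping and the exponential-rate comparisons above.
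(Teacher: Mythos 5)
Your overall strategy coincides with the paper's: expand the fourth moment using independence and the uniform phases, observe that only phase-cancelling index patterns survive, isolate the $m=0$ contribution $\gamma_{2}\left( \frac{2\ell +1}{4\pi }\right) ^{2}P_{\ell }^{4}(\cos \theta )$, and kill all other terms through the exponential separation of the moment functions; your rate bookkeeping ($\gamma _{2}$ larger than $\gamma _{0}\gamma _{1},\gamma _{3},\gamma _{1}^{2}$ by at least $e^{\nu _{\ell }^{2}/2}$ up to polynomial factors) is the same mechanism as the paper's bounds of the form $K_{\varepsilon }\exp \{-\frac{\lambda ^{2}(1-\varepsilon )}{2C_{\ell }}\}$ multiplied by the polynomial count $(4\ell ^{2}+2\ell )$, and the pole evaluations are identical. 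However, the step you explicitly defer --- ``counting the ordered arrangements of each multiset and reassembling them into the displayed closed form'' --- does not go through as promised, and this is a genuine issue: the surviving patterns $\{0,0,k,-k\}$, $\{k,k,-k,-k\}$ and $\{k,-k,j,-j\}$ have $12$, $6$ and $24$ ordered arrangements respectively, so the honest count produces, with $s_{m}:=\left\vert Y_{\ell m}(\theta ,\phi )\right\vert ^{2}$ and $S:=\frac{2\ell +1}{4\pi }$,
\begin{equation*}
E\left\{ T_{\ell }^{reg}(\theta ,\phi )^{4}\right\} =\gamma _{2}s_{0}^{2}+3\gamma _{3}\sum_{m\neq 0}s_{m}^{2}+6\gamma _{0}\gamma _{1}s_{0}(S-s_{0})+3\gamma _{1}^{2}\sum_{m,m^{\prime }\neq 0,\;m^{\prime }\neq \pm m}s_{m}s_{m^{\prime }}\text{ ,}
\end{equation*}
i.e.\ Wick multiplicities of $3$ appear on all three cross/pair terms, and the $\gamma _{1}^{2}$ sum must exclude $m^{\prime }=-m$ (those pairs belong to the $\gamma _{3}$ diagonal, since $a_{\ell ,-m}^{reg}$ is determined by $a_{\ell m}^{reg}$). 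The sanity check at $\lambda \rightarrow 0$ confirms this: with $\gamma _{0},\gamma _{1}\rightarrow C_{\ell }$, $\gamma _{2}\rightarrow 3C_{\ell }^{2}$, $\gamma _{3}\rightarrow 2C_{\ell }^{2}$, the corrected identity collapses to $3S^{2}C_{\ell }^{2}=3(E\{T_{\ell }^{2}\})^{2}$, whereas the displayed formula gives $C_{\ell }^{2}(S^{2}+s_{0}^{2}+\sum_{m}s_{m}^{2})$; the same factor-$3$ issue affects the real-valued identity, whose cross term should read $3\gamma _{0}^{2}\sum_{m\neq m^{\prime }}(Y_{\ell m}^{\mathcal{R}})^{2}(Y_{\ell m^{\prime }}^{\mathcal{R}})^{2}$. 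So your method, carried out correctly, actually corrects the stated exact formulas rather than reproducing them (the paper's own expansion makes the same slips); none of this touches the four asymptotic claims, since every non-$\gamma _{2}$ term remains exponentially negligible, but you should not assert that the count reassembles into the displayed closed form.

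Where you genuinely diverge from the paper is the final delocalization step for $V_{\ell }$. The paper bounds $\sum_{m}(Y_{\ell m}^{\mathcal{R}})^{4}\leq 2\sum_{m}\left\vert Y_{\ell m}\right\vert ^{4}$ and invokes the Sogge--Zelditch estimate $\frac{1}{2\ell +1}\int_{S^{2}}\sum_{m}\left\vert Y_{\ell m}\right\vert ^{4}dx=o(\{\log \ell \}^{1/4})$, concluding $V_{\ell }=o(\{\log \ell \}^{1/4}/\ell )$ ``for almost all $x$''; strictly speaking this is an $L^{1}$ bound, and since the rate $\ell ^{-1}(\log \ell )^{1/4}$ is not summable the pointwise a.e.\ conclusion needs an additional argument (subsequences, or a Borel--Cantelli device). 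Your route --- $V_{\ell }\leq \frac{4\pi }{2\ell +1}\max_{m}(Y_{\ell m}^{\mathcal{R}}(\theta ,\phi ))^{2}$ via the addition theorem $\sum_{m}(Y_{\ell m}^{\mathcal{R}})^{2}=\frac{2\ell +1}{4\pi }$, combined with Hilb/turning-point asymptotics giving $\max_{m}\left\vert Y_{\ell m}(\theta ,\phi )\right\vert ^{2}=o(\ell )$ at fixed $\theta \in (0,\pi )$ --- yields $V_{\ell }\rightarrow 0$ at \emph{every} interior point, which is stronger and cleaner than the paper's statement; the one thing you must supply is a citable uniform-in-$m$ bound for normalized associated Legendre functions (the crude addition-theorem bound only gives $\max_{m}(Y_{\ell m}^{\mathcal{R}})^{2}=O(\ell )$, hence $V_{\ell }=O(1)$, which is not enough). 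Finally, your caveat that the complex-scheme limit holds where $P_{\ell }(\cos \theta )\neq 0$ is appropriate --- the paper glosses over the zeros of $P_{\ell }$ --- and your pole computations $E\{T_{\ell }^{reg}(0,0)^{4}\}=\gamma _{2}\left( \frac{2\ell +1}{4\pi }\right) ^{2}$ and $V_{\ell }(0,0)=1$ agree exactly with the paper.
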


\begin{proof}
Recall that $E\left\{ T_{\ell }^{reg}(0,0)^{4}\right\} =E\{\left\vert
a_{\ell 0}^{reg}\right\vert ^{4}\}\left\{ \frac{2\ell +1}{4\pi }\right\} ^{2}
$, and note that%
\begin{align}
&E\left\{ T_{\ell }^{reg}(\theta ,\phi )^{4}\right\}  \notag \\
&=\sum_{m}E\{\left\vert a_{\ell m}^{reg}\right\vert ^{4}\}\left\vert Y_{\ell
m}(\theta ,\phi )\right\vert ^{4}+\sum_{m\neq m^{\prime }}E\{\left\vert
a_{\ell m}^{reg}\right\vert ^{2}\}E\{\left\vert a_{\ell m^{\prime
}}^{reg}\right\vert ^{2}\}\left\vert Y_{\ell m}(\theta ,\phi )\right\vert
^{2}\left\vert Y_{\ell {m^{\prime }}}(\theta ,\phi )\right\vert ^{2}  \notag
\\
&=\sum_{m\neq 0}E\{\left\vert a_{\ell m}^{reg}\right\vert ^{4}\}\left\vert
Y_{\ell m}(\theta ,\phi )\right\vert ^{4} +\sum_{m\neq m^{\prime
}}E\{\left\vert a_{\ell m}^{reg}\right\vert ^{2}\}E\{\left\vert a_{\ell
m^{\prime }}^{reg}\right\vert ^{2}\}\left\vert Y_{\ell m}(\theta ,\phi
)\right\vert ^{2}\left\vert Y_{\ell {m^{\prime }}}(\theta ,\phi )\right\vert
^{2}  \label{ghione2} \\
&\;\;+ E\{( a_{\ell 0}^{reg})^{4}\}\left\{ \frac{2\ell +1}{4\pi }\right\}
^{2}P_{\ell }^{4}(\cos \theta )  \notag
\end{align}%
whence it suffices to notice that the expected values in (\ref{ghione2}) are
all of smaller order with respect to $E\{( a_{\ell 0}^{reg})^{4}\}$ . Indeed
from (\ref{ghione3}) it follows easily that%
\begin{equation*}
\lim_{{\lambda }/\sqrt{C_{\ell }}\rightarrow \infty }\frac{\sum_{m\neq
m^{\prime }}E\{\left\vert a_{\ell m}^{reg}\right\vert^{2} \} E\{\left\vert
a_{\ell m^{\prime }}^{reg}\right\vert^{2} \}}{E\left\{ (a_{\ell
0}^{reg})^4\right\} }\leq \lim_{{\lambda }/\sqrt{C_{\ell }}\rightarrow
\infty }{{\ (4 \ell^2+2 \ell) \max_{m\neq m^{\prime }}}}\frac{E\{\left\vert
a_{\ell m}^{reg}\right\vert ^{2}\}E\{\left\vert a_{\ell m^{\prime
}}^{reg}\right\vert ^{2}\}}{E\left\{ (a_{\ell 0}^{reg})^{4}\right\} }=0\text{
.}
\end{equation*}%
Note also that%
\begin{eqnarray*}
E\{\left\vert a_{\ell m}^{reg}\right\vert ^{4}\} &=&\int_{0}^{\infty
}r^{4}f_{\rho _{\ell m}}(r)dr \\
&=&\int_{\lambda }^{\infty }(r-\lambda )^{4}2\frac{r}{C_{\ell }}\exp (-\frac{%
r^{2}}{C_{\ell }})dr \\
&=&2C_{\ell }^{2}\int_{{\lambda }}^{\infty }(\frac{r-\lambda }{\sqrt{C_{\ell
}}})^{4}\frac{r}{\sqrt{C_{\ell }}}\exp (-\frac{r^{2}}{C_{\ell }})d\frac{r}{%
\sqrt{C_{\ell }}} \\
&=&2C_{\ell }^{2}\int_{\lambda /\sqrt{C_{\ell }}}^{\infty }(u-\frac{\lambda
}{\sqrt{C_{\ell }}})^{4}u\exp (-u^{2})du\text{ .}
\end{eqnarray*}%
It follows that%
\begin{eqnarray*}
\frac{E\{\left\vert a_{\ell m}^{reg}\right\vert ^{4}\}}{E\{( a_{\ell
0}^{reg})^{4}\}} &=&\frac{2C_{\ell }^{2}\int_{\lambda /\sqrt{C_{\ell }}%
}^{\infty }(u-\frac{\lambda }{\sqrt{C_{\ell }}})^{4}u\exp (-u^{2})du}{\frac{{%
2}}{\sqrt{2\pi }}C_{\ell }^{2}\int_{\lambda /\sqrt{C_{\ell }}}^{\infty }(u-%
\frac{\lambda }{\sqrt{C_{\ell }}})^{4}\exp \left\{ -\frac{u^{2}}{2}\right\}
du} \\
&\leq &K\exp \left\{ -\frac{\lambda ^{2}(1-\varepsilon )}{2C_{\ell }}%
\right\} ,\text{ any }\varepsilon >0\text{ ,}
\end{eqnarray*}%
so that
\begin{equation*}
\lim_{{\lambda }/\sqrt{C_{\ell }}\rightarrow \infty }\frac{E\{\left\vert
a_{\ell m}^{reg}\right\vert ^{4}\}}{E\left\{ (a_{\ell 0}^{reg})^{4}\right\} }%
=0\text{ }.
\end{equation*}%
It is then immediate to see that $\left[ (\ref{ghione2})/E\left\{ T_{\ell
}^{reg}(0,0)^{4}\right\} \right] \rightarrow 1$ as $\lambda /\sqrt{C_{\ell }}%
\rightarrow \infty ,$ whence our first result is established. By an
analogous argument, it is easy to see that%
\begin{align*}
\frac{E\left\{ T_{\ell }^{reg *}(\theta ,\phi )^{4}\right\} }{E\{T_{\ell
}^{reg\ast }(0,0)^{4}\}V_{\ell }(\theta ,\phi )}&=\frac{\sum_{m}E\{\left%
\vert a_{\ell m}^{reg*}\right\vert ^{4}\}\left\vert Y_{\ell m}^{\mathcal{R}%
}(\theta ,\phi )\right\vert ^{4}}{E\{T_{\ell }^{reg\ast }(0,0)^{4}\}V_{\ell
}(\theta ,\phi )} \\
&\;\;+\frac{\sum_{m\neq m^{\prime }}E\{\left\vert a_{\ell
m}^{reg*}\right\vert ^{2}\}E\{\left\vert a_{\ell m^{\prime
}}^{reg*}\right\vert ^{2}\}\left\vert Y_{\ell m}^{\mathcal{R}}(\theta ,\phi
)\right\vert ^{2}\left\vert Y_{\ell m^{\prime }}^{\mathcal{R}}(\theta ,\phi
)\right\vert ^{2}}{E\{T_{\ell }^{reg\ast }(0,0)^{4}\}V_{\ell }(\theta ,\phi )%
}\rightarrow 1\text{ ,}
\end{align*}%
so that we need only investigate the asymptotic behavior of
\begin{equation*}
V_{\ell }(\theta ,\phi ):=\left\{ \frac{4\pi }{2\ell +1}\right\}
^{2}\sum_{m=-\ell }^{\ell }(Y_{\ell ,m}^{\mathcal{R}})^{4}\text{ .}
\end{equation*}%
To this aim, we recall the following recent result by Sogge and Zelditch
\cite{sogge}; as $\ell \rightarrow \infty $%
\begin{equation*}
\frac{1}{2\ell +1}\int_{S^{2}}\sum_{m=-\ell }^{\ell }\left\vert Y_{\ell
,m}(x)\right\vert ^{4}dx=o(\left\{ \log \ell \right\} ^{1/4})\text{ .}
\end{equation*}%
Now of course%
\begin{eqnarray*}
\sum_{m=-\ell }^{\ell }\left\vert Y_{\ell ,m}(x)\right\vert ^{4}
&=&\sum_{m=-\ell }^{\ell }\left\{ \left\vert \func{Re}(Y_{\ell
,m}(x))\right\vert ^{2}+\left\vert \func{Im}(Y_{\ell ,m}(x))\right\vert
^{2}\right\} ^{2} \\
&=&\sum_{m=-\ell }^{\ell }\left\{ \left\vert \func{Re}(Y_{\ell
,m}(x))\right\vert ^{4}+\left\vert \func{Im}(Y_{\ell ,m}(x))\right\vert
^{4}+2\left\vert \func{Re}(Y_{\ell ,m}(x))\right\vert ^{2}\left\vert \func{Im%
}(Y_{\ell ,m}(x))\right\vert ^{2}\right\} \\
&\geq &\frac{1}{2}\sum_{m=-\ell }^{\ell }(Y_{\ell ,m}^{\mathcal{R}})^{4}%
\text{ ,}
\end{eqnarray*}%
from which we obtain immediately%
\begin{equation*}
\left\{ \frac{4\pi }{2\ell +1}\right\} ^{2}\sum_{m=-\ell }^{\ell }(Y_{\ell
,m}^{\mathcal{R}})^{4}=o(\frac{\left\{ \log \ell \right\} ^{1/4}}{\ell })%
\text{ for almost all }x\in S^{2},
\end{equation*}%
as claimed.
\end{proof}

\begin{remark}
The previous Theorem can be expressed in plain words as follows: under the
complex-valued regularization scheme, after normalization, the trispectrum
behaves asymptotically as the fourth power of the Legendre polynomial. In
the real-valued case, the normalized trispectrum behaves as the averaged sum
of the fourth-powers of (real-valued) spherical harmonics. The first result
is heuristically explained considering that sparsity will enforce the choice
of the single coefficient $a_{\ell 0}^{reg}$ more and more often,as $\lambda
/\sqrt{C_{\ell }}\rightarrow 0;$ in the latter case, each coefficient $%
a_{\ell m}^{\mathcal{R}}$ has the same probability to be selected, as they
are all identically distributed: however in the limit at most one of them
will be nonzero, so the trispectrum will reproduce the oscillations of a
single (randomly chosen) functions $Y_{\ell m}^{\mathcal{R}}$. Note that as $%
\ell \rightarrow \infty ,$ $P_{\ell }(\cos \theta )\rightarrow 0$ for all $%
\theta \neq 0,\pi ,$ whence in both cases the trispectrum at the Poles has a
dominating behavior with respect to almost all other directions.
\end{remark}

\section{Some Concluding Remarks \label{conclusions}}

In this paper, we have shown that convex regularization of spherical
isotropic Gaussian fields with a Fourier dictionary does not preserve in
general the Gaussianity and isotropy properties of the input random fields.
We refer to \cite{feeney} for more discussions form a physical point of view
and ample numerical evidence to illustrate these claims, in a setting
related to Cosmological data analysis.

In a nutshell, our arguments can be summarized as follows. The
result of convex regularization is basically a form of
soft-thresholding on the spherical harmonic coefficients $\left\{
a_{\ell m}\right\}.$ These coefficients are hence independent and
nonGaussian, whence anisotropy follows. Indeed, this finding
complements earlier results from \cite{BaMa, BMV, MarPecBook, BT},
entailing that independent coefficients in a spherical harmonic
basis are necessarily Gaussian under isotropy, and therefore
cannot be sparse in the usual meaning with which this concept is
understood.

It seems hence quite natural to extend our results and to suggest that for
Gaussian isotropic random fields defined on homogeneous spaces of
noncommutative groups, sparsity cannot be imposed on the random coefficients
of a Fourier basis. The crucial difficulty here is the choice of a Fourier
basis as a sparsity dictionary in a noncommutative setting; in particular it
should be noted that our arguments do not entail that anisotropy will arise
when choosing, for instance, a wavelet frame as a dictionary. Likewise, no
anisotropy would arise for homogeneous spaces of commutative groups: for
instance, soft- or hard- thresholding the random Fourier coefficients of
isotropic random fields on the circle does not make these random fields
anisotropic. It is indeed the noncommutative manifold structure of the
sphere, and in particular the multiplicity of eigenfunctions corresponding
to the same eigenvalue, which brings in a conflict between independence and
nonGaussianity, under isotropy assumptions; because the random spherical
harmonics coefficients arising from convex regularization are independent
and nonGaussian, anisotropy follows.

The paper has presented a number of characterizations of such anisotropy as
a function of the angular power spectra of the input fields; the relevance
of these findings for the areas where convex regularization is exploited as
a preliminary step for spherical data analysis is going to be investigated
elsewehere.


\begin{thebibliography}{99}
\bibitem{abrial1} Abrial, P., Moudden, Y., Starck, J., et al. (2007),
Morphological Component Analysis and Inpainting over the Sphere: Application
in Physics and Astrophysics,\textit{\ Journal of Fourier Analysis and
Applications,} 13, 66, 729-748.

\bibitem{BaMa} Baldi, P. and Marinucci, D. (2007) Some Characterizations of
the Spherical Harmonics Coefficients for Isotropic Random Fields. \textit{%
Statistics and Probability Letters } 77, 5, 490-496.

\bibitem{BMV} Baldi, P., Marinucci, D. and Varadarajan, V.S.V. (2007) On the
Characterization of Isotropic Gaussian Fields on Homogeneous Spaces of
Compact Groups, \textit{Electronic Communications in Probability}, Vol. 12,
291-302.

\bibitem{BT} Baldi, P. and Trapani, S. (2013) Fourier Coefficients of
Invariant Random Fields on Homogeneous Spaces of Compact Groups, arxiv
preprint 1304.5142

\bibitem{baraniuk} Baraniuk, R., Cevher, V., and Wakin, M. (2010),
Low-Dimensional Models for Dimensionality Reduction and Signal Recovery: A
Geometric Perspective, \textit{Proceedings of the IEEE}, 98, 959.

\bibitem{WMAP} Bennett, C. L. \ et al. (2012) Nine-Year Wilkinson Microwave
Anisotropy Probe (WMAP) Observations: Final Maps and Results, arXiv:1212.5225

\bibitem{Candes} Candes, E., Romberg, J. and Tao, T. (2005) Stable Signal
Recovery from Incomplete and Inaccurate Measurements. \textit{Comm. Pure
Appl. Math.,} 59, 1207-1223.

\bibitem{chen} Chen, S.S., Donoho, D.L., Saunders, M.A. (1999) Atomic
Decomposition by Basis Pursuit, \textit{SIAM Journal on Scientific Computing}%
\emph{\ }20, 1, 33-61

\bibitem{daubechies} Daubechies, I., Fornasier, M. and Loris, I. (2008)
Accelerated Projected Gradient Method for Linear Inverse Problems with
Sparsity Constraints, \textit{Journal of Fourier Analysis and Applications,}%
\emph{\ }14, 5-6, pp.764-792

\bibitem{donoho} Donoho, D. (2006), For Most Large Underdetermined Systems
of Linear Equations the Minimal 1-norm Solution is Also the Sparsest
Solution, \textit{Communications on Pure and Applied Mathematics,}
59,797-829.

\bibitem{dupe} Dupe, F.-X., Rassat, A., Starck, J.-L., and Fadili, M. J.
(2011), Measuring the Integrated Sachs-Wolfe Effect, \textit{Astronomy and
Astrophysics,} 534.

\bibitem{efron} Efron, B., Hastie, T., Johnstone, I. and Tibshirani R.
(2004) Least Angle Regression, \textit{Annals of Statistics}, 32, 2, 407-499

\bibitem{feeney} Feeney, S., Marinucci, D. McEwen, J., Peiris, H., Wandelt,
B. and Cammarota, V. (2013), Sparse Inpainting and Isotropy,
arXiv:1308.0602. \textit{Submitted}

\bibitem{hu} Hu, W. (2001) The Angular Trispectrum of the CMB \textit{\
Phys.Rev.} D64 (2001) 083005

\bibitem{fornasier} Fornasier, M. (2007) Domain Decomposition Methods for
Linear Inverse Problems with Sparsity Constraints, \textit{Inverse Problems,
}23, 2505-2526

\bibitem{MarPecBook} Marinucci, D. and Peccati, G. (2011) \emph{Random
Fields on the Sphere: Representations, Limit Theorems and Cosmological
Applications,} Cambridge University Press, Cambridge.

\bibitem{mcewen} McEwen J.D., Puy G., Thiran J.P., Vandergheynst P., Ville
D.V.D., Wiaux Y., (2013), Sparse Image Reconstruction on the Sphere:
Implications of a New Sampling Theorem, \textit{IEEE Trans. Image} Proc.,
22, 6, 2275, arXiv:1205.1013

\bibitem{osborne} Osborne, M.R., Presnell, B. and Turlach B.A. (2000) A New
Approach to Variable Selection in Least Squares Problems, \textit{IMA
Journal of Numerical Analysis,}\emph{\ }20, 3, 389-403

\bibitem{rauhut} Rauhut, H. and Ward, R. (2012), Sparse Legendre expansions
via $\ell ^{1}$-minimization, \textit{Journal of Approximation Theory,}
Vol.164, Issue 5.

\bibitem{rauhut2} Rauhut, H. and Ward, R. (2012), Sparse Recovery for
Spherical Harmonic Expansions, arXiv 1102.4097

\bibitem{Planck} Planck Collaboration (2013) Planck 2013 results. I.
Overview of Products and Scientific Results, arXiv 1303.5062

\bibitem{planckNG} Planck Collaboration (2013) Planck 2013 Results. XXIV.
Constraints on primordial non-Gaussianity, arXiv:1303.5084 [pdf, other]

\bibitem{sogge} Sogge, C. D. and Zelditch, S. (2012) Concerning the $L^{4}$
norms of typical eigenfunctions on compact surfaces, Recent Development in
Geometry and Analysis, ALM 23, Int.Press, Beijing-Boston, 407-423,
arXiv:1011.0215

\bibitem{StarckBook} Starck, J.-L., Murtagh, F., and Fadili, M. (2010),
\emph{Sparse Image and Signal Processing, }Cambridge University Press,
Cambridge.

\bibitem{Starck1} Starck, J.-L., Fadili, M. J., and Rassat, A. (2013), Low-l
CMB Analysis and Inpainting, \textit{Astronomy and Astrophysics}, 550, A15, 8

\bibitem{StarckBayes} Starck, J.-L., Donoho, D.L. , Fadili, M.J., Rassat, A.
(2013), Sparsity and the Bayesian Perspective, arXiv:1302.2758

\bibitem{tibshirani} Tibshirani, R. (1996) Regression Shrinkage and
Selection by the Lasso, \textit{Journal of Royal Statistical Society}, \
58,1, 267-288

\bibitem{VMK} D.A. Varshalovich, A.N. Moskalev and V.K. Khersonskii (1988).
\textit{Quantum Theory of Angular Momentum},\emph{\ }World Scientific Press.

\bibitem{wright} Wright, S., Nowak, R. and Figuereido, M. (2009) Sparse
Reconstruction by Separable Approximation, \textit{IEEE Transactions on
Signal Processing }57, 2479-2493
\end{thebibliography}
\end{document}